\crefname{assumption}{Assumption}{Assumptions}
\newcommand{\ttup}[1]{\textup{(}#1\textup{)}}
\newcommand{\vertiii}[1]{{\left\vert\kern-0.25ex\left\vert\kern-0.25ex\left\vert #1
 \right\vert\kern-0.25ex\right\vert\kern-0.25ex\right\vert}}
\newcommand{\RR}{\mathds{R}}
\newcommand{\NN}{\mathds{N}}
\newcommand{\Rd}{{\mathds{R}^{d}}}
\DeclareMathOperator{\Exp}{\mathbb{E}}
\DeclareMathOperator{\Prob}{\mathbb{P}}
\newcommand{\D}{\mathrm{d}}
\newcommand{\E}{\mathrm{e}}
\newcommand{\Ind}{\mathds{1}} 
\newcommand{\Sob}{{\mathscr W}} 
\newcommand{\Sobl}{{\mathscr W}_{\text{loc}}} 
\newcommand{\Uadm}{{\Xi}}
\newcommand{\Act}{{\mathscr{K}}}
\newcommand{\Usm}{{\Xi_{\mathsf{sm}}}}
\newcommand{\rc}{c}
\newcommand{\sA}{{\mathscr{A}}} 
\newcommand{\sB}{{\mathscr{B}}} 
\newcommand{\Cc}{{\mathcal{C}}} 
\newcommand{\sE}{{\mathscr{E}}} 
\newcommand{\sF}{{\mathfrak{F}}} 
\newcommand{\cG}{{\mathcal{G}}} 
\newcommand{\cH}{{\mathcal{H}}} 
\newcommand{\eom}{{\mathcal{M}}} 
\newcommand{\Lg}{{\mathcal{L}}} 
\newcommand{\Lp}{{L}} 
\newcommand{\cP}{{\mathcal{P}}} 
\newcommand{\sR}{{L}} 
\newcommand{\cX}{{\mathcal{X}}}
\newcommand{\cZ}{{\mathcal{Z}}}
\newcommand{\varphis}{{\varphi^{}_{\mspace{-2mu}*}}}
\newcommand{\Phis}{\Phi_{\mspace{-2mu}*}}
\newcommand{\Psiv}{\Psi_{\mspace{-2mu}v}}
\newcommand{\Vst}{V_{\mspace{-2mu}*}}
\newcommand{\rhos}{{\rho^{}_*}}
\newcommand{\cPs}{{\cP^{}_{\mspace{-3mu}*}}}
\newcommand{\cPo}{{\cP^{}_{\mspace{-3mu}\circ}}}
\newcommand{\df}{\coloneqq}
\newcommand{\transp}{^{\mathsf{T}}}
\DeclareMathOperator*{\trace}{trace}
\newcommand{\grad}{\nabla}
\newcommand{\uuptau}{{\Breve\uptau}}
\newcommand{\abs}[1]{\lvert#1\rvert}
\newcommand{\norm}[1]{\lVert#1\rVert}
\newcommand{\babs}[1]{\bigl\lvert#1\bigr\rvert}
\newcommand{\bnorm}[1]{\bigl\lVert#1\bigr\rVert}
\newcommand{\compcent}[1]{\vcenter{\hbox{$#1\circ$}}}
\newcommand{\comp}{\mathbin{\mathchoice
 {\compcent\scriptstyle}{\compcent\scriptstyle}
 {\compcent\scriptscriptstyle}{\compcent\scriptscriptstyle}}}
\newcommand{\TheTitle}{A variational formula for risk-sensitive control}
\newcommand{\TheAuthors}{A. Arapostathis, A. Biswas, V.S. Borkar, and K. Suresh Kumar}
\headers{\TheTitle}{\TheAuthors}
\title{A variational characterization of the risk-sensitive
average reward for controlled diffusions on $\Rd$.}
\author{Ari Arapostathis\thanks{Department of Electrical and Computer Engineering,
The University of Texas at Austin,
EER~7.824, Austin, TX~~78712 (\email{ari@utexas.edu}).}
\and
Anup Biswas\thanks{Department of Mathematics,
Indian Institute of Science Education and Research,
Dr.\ Homi Bhabha Road, Pune 411008, India (\email{anup@iiserpune.ac.in}).}
\and
Vivek S. Borkar\thanks{Department of Electrical Engineering,
Indian Institute of Technology, Powai, Mumbai 400076, India
(\email{borkar.vs@gmail.com}).}
\and
K. Suresh Kumar\thanks{Department of Mathematics,
Indian Institute of Technology, Powai, Mumbai 400076, India
(\email{ksureshiitb@gmail.com}).}}
\begin{document}
\maketitle

\begin{abstract}
We address the variational formulation of the risk-sensitive reward problem
for non-degenerate diffusions on $\Rd$ controlled through the drift.
We establish a variational formula on the whole space and also show that
the risk-sensitive value equals the generalized principal eigenvalue
of the semilinear operator.
This can be viewed as a controlled version of the variational
formulas for principal eigenvalues of diffusion operators arising in large deviations.
We also revisit the average risk-sensitive minimization problem and by employing
a gradient estimate developed in this paper we extend earlier results to
unbounded drifts and running costs.
\end{abstract}

\begin{keywords}
principal eigenvalue,  Donsker--Varadhan functional, risk-sensitive criterion
\end{keywords}
\begin{AMS}
60J60, Secondary 60J25, 35K59, 35P15, 60F10
\end{AMS}

\section{Introduction}

In this paper we consider the risk-sensitive reward maximization problem on
$\Rd$ for diffusions controlled through the drift.
The main objective is to derive a variational formulation for the risk-sensitive
reward in the spirit of \cite{VAB}, which does so for discrete time problems on a
compact state space, and analyze the associated Hamilton--Jacobi--Bellman (HJB)
equation.
Since the seminal work of Donsker and Varadhan \cite{DoVa-72,DoVa-76b},
this problem has acquired prominence.
The variational formula derived here can be viewed as a controlled version of
the variational formulas for principal eigenvalues of diffusion operators arising
in large deviations.
For reversible diffusions, this formula can be viewed as an abstract Courant--Fischer
formula \cite{DoVa-72}. 
For general diffusions, the correct counterpart in linear algebra is the
Collatz--Wielandt formula for the principal eigenvalue of non-negative
matrices \cite[Chapter~8]{Meyer}.
For its connection with the large deviations theory for finite Markov chains and
an equivalent variational description, see \cite{Dembo}. 

There has been considerable interest to generalize this theory to a natural class of
nonlinear self-maps on positive cones of finite or infinite dimensional spaces.
The first task is to establish the existence and where possible, uniqueness of the
principal eigenvalue and eigenvector (the latter modulo a scalar multiple as usual),
that is, a nonlinear variant of the Perron--Frobenius theorem in the finite dimensional
case and its generalization, the Krein--Rutman theorem, in Banach spaces.
This theory is carried out in, e.g., \cite{Lemmens,Ogiwara}.
The next problem is to derive an abstract Collatz--Wielandt formula for the principal
eigenvalue \cite{Gaubert}.
In bounded domains, a Collatz--Wielandt formula for the Dirichlet principal eigenvalue
of a convex nonlinear operator is obtained in \cite{Armstrong-09}.
Our first objective coincides with this, albeit for 
Feynman--Kac operators arising in risk-sensitive control that we introduce later.
For risk-sensitive \emph{reward} processes, that is, the problem of maximizing the
asymptotic growth rate for the risk-sensitive reward in discrete time problems,
one can go a step further and give an explicit characterization of the principal
eigenvalue as the solution of a concave maximization problem \cite{VAB}.
The objective of this article is to carry out this program for controlled diffusions.

At this juncture, it is worthwhile to underscore the difference between reward
maximization and cost minimization problems with risk-sensitive criteria.
Unlike the more classical criteria such as ergodic or discounted, they cannot be
converted from one to the other by a sign flip. The cost minimization criterion,
after a logarithmic transformation applied to its HJB equation, leads to the
Isaacs equation for a zero-sum stochastic differential game \cite{FM}.
An identical procedure applied to the reward maximization problem would lead
to a \emph{team} problem wherein the two agents seek to maximize the same payoff
\emph{non-cooperatively}.
The latter in particular implies that their decisions at any time are conditionally
independent given the state (more generally, the past history).
Our approach leads to a concave maximization problem, an immense improvement
with potential implications for possible numerical schemes.
This does not seem possible for the cost minimization problem.
Thus the complexity of the latter is much higher.
Recently, a risk-sensitive maximization problem
is also studied in \cite{BS-20} under a blanket geometric stability condition.
In the present paper we do not impose any blanket stability on the controlled processes.

We first establish these results for reflected diffusions in a bounded domain,
for which the nonlinear Krein--Rutman theorem of \cite{Ogiwara} paves the way.
This is not so if the state space is all of $\Rd$.
Extension to the whole space turns out to be quite involved due to the lack
of compactness.
Even the well-posedness of the underlying nonlinear eigenvalue problem is pretty tricky.
Hence we proceed via the infinite volume limit of the finite volume problems.
This leads to an abstract Collatz--Wielandt formula and an
abstract Donsker--Varadhan formula.
More specifically, in \cref{T3.1} we show that the generalized eigenvalue of
the semilinear operator is simple, and identify some useful properties of its
eigenvector.
We proceed to prove equality between the risk-sensitive
value and the generalized principal eigenvalue in \cref{T3.2},
which also establishes a verification of optimality criterion.
The general result for the variational formula is in \cref{P4.1}, followed by
more specialized results in  \cref{T4.1,T4.2}.
In the process of deriving these results,
we present some techniques that may have wider applicability.
Most prominent of these is perhaps the gradient estimate in \cref{L4.1} for operators
with measurable coefficients.

Lastly, in \cref{S5} we revisit the risk-sensitive minimization problem,
and with the aid of \cref{L4.1} we improve the main result
in \cite{AB-18} by extending it to unbounded drifts and running costs,
under suitable growth conditions (see \cref{A5.1}).

\subsection{A brief summary of the main results}\label{S1.1}
We summarize here the results concerning the variational formula
on the whole space.
We consider a controlled diffusion in $\Rd$ of the form
\begin{equation*}
\D X_t \,=\, b(X_t,\xi_t)\,\D t + \upsigma (X_t)\,\D W_t
\end{equation*}
defined in a complete
probability space $(\Omega,\sF,\Prob)$.
The process $W$ is a $d$-dimensional standard Wiener process independent
of the initial condition $X_{0}$, and
the control process $\{\xi_t\}_{t\ge0}$ lives in a compact metrizable space $\Act$.
We impose a standard set of assumptions on the coefficients 
which guarantee existence and uniqueness of strong solutions under
all admissible controls.
Namely, local Lipschitz continuity in $x$
and at most affine growth of $b$ and $\upsigma$,
and local non-degeneracy of $a\df\upsigma\upsigma\transp$ (see \cref{A3.1}\,(i)).
But we do not impose any ergodicity assumptions on the controlled diffusion.
The process $\{X_t\}_{t\ge0}$ could be transient.

We let $\rc\colon\Rd\times\Act\to\RR$ be a continuous running reward function,
which is assumed bounded from above,
and define the \emph{optimal risk-sensitive value} $J_*$ by
\begin{equation*}
J_* \,\df\, \sup_{\{\xi_t\}_{t\ge0}}\;\liminf_{T\to\infty}\, \frac{1}{T}\,
\log \Exp \Bigl[\E^{\int^T_0 \rc(X_t,\xi_t)\,\D t} \Bigr]\,,
\end{equation*}
where the supremum is over all admissible controls, and $\Exp$ denotes
the expectation operator.
This problem is translated to an ergodic control problem for the operator
$\sA\colon \Cc^{2}(\Rd) \to \Cc(\Rd\times\Act\times\Rd)$, defined by
\begin{equation}\label{EsA}
\sA\phi(x,\xi,y)\,\df\,\frac{1}{2}\trace\left(a(x)\nabla^{2}\phi(x)\right)
+ \bigl\langle b(x,\xi)+ a(x)y, \nabla \phi(x)\bigr\rangle\,,
\end{equation}
where $\nabla^{2}$ denotes the Hessian,
and $a(x)=\upsigma(x)\upsigma\transp(x)$,
that seeks to maximize the average value of the functional
\begin{equation}\label{EsR}
\sR(x,\xi,y) \,\df\, \rc(x,\xi) - \frac{1}{2}\abs{\upsigma\transp(x) y}^2\,,\quad
(x,\xi,y)\in \Rd\times\Act\times\Rd\,.
\end{equation}
We first show that the generalized principal eigenvalue $\lambda_*$
(see \cref{E-princ2})
of the maximal operator
\begin{equation}\label{EcG}
\cG f(x) \,\df\, \frac{1}{2}\trace\left(a(x)\nabla^{2}f(x)\right)
+ \max_{\xi\in\Act}\, \bigl[\bigl\langle b(x,\xi),
\grad f(x)\bigr\rangle + \rc(x,\xi) f(x)\bigr]
\end{equation}
is simple.
An important hypothesis for this is that $\rc-\lambda_*$ is negative and
bounded from above away from zero on the complement of some compact set
(see \cref{A3.1}\,(iii)).
This is always satisfied if $-\rc$ is an inf-compact function
(i.e., the sublevel sets $\{-\rc \le \kappa\}$ are compact, or empty, in $\Rd\times\Act$
for each $\kappa\in\RR$), or if $\rc$ is a positive function vanishing at infinity
and the process $\{X_t\}_{t\ge0}$ is recurrent under some
stationary Markov control.
Let the positive function
$\Phis\in\Cc^2(\Rd)$, normalized as $\Phis(0)=1$ to render it unique,
denote the principal eigenvector, that is, $\cG\Phis=\lambda_*\Phis$,
and define $\varphis=\log\Phis$.
The function
\begin{equation}\label{Eentropy}
\cH(x)\,\df\,\frac{1}{2}\,\babs{\upsigma\transp(x)\grad \varphis(x)}^2\,,
\quad x\in\Rd\,,
\end{equation}
plays a very important role in the analysis, and can be interpreted
as an \emph{infinitesimal relative entropy rate} (see \cref{S4}).
To keep the notation simple, we define $\cZ \df \Rd\times\Act\times\Rd$, and
use the single variable $z=(x,\xi,y)\in\cZ$.
Let $\cP(\cZ)$ denote the set of probability measures
on the Borel $\sigma$-algebra of $\cZ$, and 
$\eom_A$ denote the set of \emph{infinitesimal ergodic occupation measures}
for the operator $\sA$ defined by
\begin{equation}\label{Eeom}
\eom_{\sA}\,\df\,\biggl\{ \mu\in \cP(\cZ)\,\colon
\int_{\cZ} \sA f(z)\,\mu(\D{z}) \,=\, 0\quad \forall\, f\in\Cc^{2}_c(\Rd)\biggr\}\,,
\end{equation}
where $\Cc^2_c(\Rd)$ is the class of functions
in $\Cc^2(\Rd)$ which have compact support.
We also define
\begin{equation}\label{Emufinite}
\begin{aligned}
\cPs(\cZ) &\,\df\,
\biggl\{\mu\in \cP(\cZ)\,\colon
\int_{\cZ} \cH(x)\,\mu(\D{x},\D{\xi},\D{y}) <\infty\biggr\}\,,\\
\cPo(\cZ) &\,\df\,
\biggl\{ \mu\in \cP(\cZ)\,\colon
\int_{\cZ} \sR(z)\,\mu(\D{z}) > -\infty\biggr\}\,.
\end{aligned}
\end{equation}
Then, under the mild hypotheses of \cref{A3.1}, we show in \cref{P4.1} that
\begin{equation}\label{E-var}
\begin{aligned}
J_* \,=\, \lambda_* &\,=\, \adjustlimits\sup_{\mu\in\cPs(\cZ)}
\inf_{g \in \Cc^2_c(\Rd)}\,\int_{\cZ} \bigl(\sA g(z)+\sR(z)\bigr)\,\mu(\D{z})\\
&
\,=\, \max_{\mu\in\eom_{\sA}\cap\cPs(\cZ)}\,\int_{\cZ} \sR(z)\,\mu(\D{z})\,.
\end{aligned}
\end{equation}

We next specialize the results to the case where the diffusion matrix
$a$ is bounded and uniformly elliptic (see \cref{A4.1}), and
show in \cref{T4.1} that under any of the hypotheses of
\cref{A4.2} we have $\eom_{\sA}\cap\cPo(\cZ)\subset
\cPs(\cZ)$.
This permits us to replace $\cPs(\cZ)$ with
$\cP(\cZ)$ and $\eom_\sA\cap\cPs(\cZ)$ with
$\eom_\sA$ in the second and third equalities of \cref{E-var}, respectively.
We note here that if $a$ is bounded and uniformly elliptic, then
\cref{A4.2} is satisfied when either
$-\rc$ is inf-compact, or $\langle b,x\rangle^-$ has subquadratic growth,
or $\frac{\abs{b}^2}{1+\abs{\rc}}$ is bounded.

We also show that if $\frac{\cH}{1+\abs{\varphis}}$ is bounded
(see \cref{L4.3} for explicit conditions on the parameters under
which this holds), then we can commute
the `$\sup$' and the `$\inf$' to obtain
\begin{equation*}
J_* \,=\, \adjustlimits\inf_{g \in \Cc^2_c(\Rd)} \sup_{\mu\in\cP(\cZ)}\,
\int_{\cZ} \bigl(\sA g(z)+\sR(z)\bigr)\,\mu(\D{z}) \,.
\end{equation*} 
Also, in \cref{T4.2}, we establish the variational formula
over the class of functions in $\Cc^2(\Rd)$ whose partial derivatives
up to second order have
at most polynomial growth in $\abs{x}$.

\subsection{Notation}
The standard Euclidean norm in $\RR^{d}$ is denoted by $\abs{\,\cdot\,}$,
and $\NN$ stands for the set of natural numbers.
The closure, the boundary and the complement
of a set $A\subset\RR^{d}$ are denoted
by $\Bar{A}$, $\partial{A}$ and $A^{c}$, respectively.
We denote by $\uptau(A)$ the \emph{first exit time} of the process
$\{X_{t}\}$ from the set $A\subset\RR^{d}$, defined by
\begin{equation*}
\uptau(A) \,\df\, \inf\,\{t>0\,\colon\, X_{t}\not\in A\}\,.
\end{equation*}
The open ball of radius $r$ in $\RR^{d}$, centered at $x\in\Rd$,
is denoted by $B_{r}(x)$, and $B_r$ is the ball centered at $0$.
We let $\uptau_{r}\df \uptau(B_{r})$,
and $\uuptau_{r}\df \uptau(B^{c}_{r})$.
For a Borel space $Y$, $\cP(Y)$ denotes the set of probability measures
on its Borel $\sigma$-algebra.

The term \emph{domain} in $\RR^{d}$
refers to a nonempty, connected open subset of the Euclidean space $\RR^{d}$.
For a domain $D\subset\RR^{d}$,
the space $\Cc^{k}(D)$ ($\Cc^{k}_b(D)$)
refers to the class of all real-valued functions on $D$ whose partial
derivatives up to order $k$ exist and are continuous (and bounded).
In addition $\Cc_c^k(D)$ denotes the class of functions in $\Cc^k(D)$ that
have compact support.
The space $\Lp^{p}(D)$, $p\in[1,\infty)$, stands for the Banach space
of (equivalence classes of) measurable functions $f$ satisfying
$\int_{D} \abs{f(x)}^{p}\,\D{x}<\infty$, and $\Lp^{\infty}(D)$ is the
Banach space of functions that are essentially bounded in $D$.
The standard Sobolev space of functions on $D$ whose generalized
derivatives up to order $k$ are in $\Lp^{p}(D)$, equipped with its natural
norm, is denoted by $\Sob^{k,p}(D)$, $k\ge0$, $p\ge1$.

In general, if $\mathcal{X}$ is a space of real-valued functions on $Q$,
$\mathcal{X}_{\mathrm{loc}}$ consists of all functions $f$ such that
$f\varphi\in\mathcal{X}$ for every $\varphi\in\Cc_{c}^{\infty}(Q)$,
the space of smooth functions on $Q$ with compact support.
In this manner we obtain for example the space $\Sobl^{2,p}(Q)$.

We adopt the notation
$\partial_{t}\df\tfrac{\partial}{\partial{t}}$, and for $i,j\in\NN$,
$\partial_{i}\df\tfrac{\partial~}{\partial{x}_{i}}$ and
$\partial_{ij}\df\tfrac{\partial^{2}~}{\partial{x}_{i}\partial{x}_{j}}$,
and use the standard summation rule that
repeated subscripts and superscripts are summed from $1$ through $d$.

\section{The problem on a bounded domain}

In this section, we consider the risk-sensitive reward maximization with state dynamics
given by a reflected diffusion on a bounded $\Cc^2$ domain $Q\subset\Rd$
with co-normal direction of reflection.
In particular, the dynamics are given by
\begin{equation}\label{E-sde}
\D X_t \,=\, b(X_t,\xi_t)\,\D t + \upsigma (X_t)\,\D W_t
- \gamma (X_t)\,\D \eta_t\,,
\end{equation}
where $\eta_t$ denotes the local time of the process $X$ on the boundary
$\partial Q$. 
The random processes in \cref{E-sde} live in a complete
probability space $(\Omega,\sF,\Prob)$.
The process $W=(W_t)_{t\ge0}$ is a $d$-dimensional standard Wiener process independent
of the initial condition $X_{0}$.
The control process $\xi=(\xi_t)_{t\ge0}$ takes values in a compact,
metrizable set $\Act$, and
$\xi_t(\omega)$ is jointly measurable in
$(t,\omega)\in[0,\infty)\times\Omega$.
The set of \emph{admissible controls} $\Uadm$ consists of the
control processes $\xi$ that are \emph{non-anticipative}:
for $s < t$, $W_{t} - W_{s}$ is independent of
\begin{equation}\label{E-sF}
\sF_{s} \,\df\,\text{the completion of~} \sigma\{X_{0},\xi_r,W_r,\,r\le s\}
\text{~relative to~}(\sF,\Prob)\,.
\end{equation}
Concerning the coefficients of the equation, we assume the following:
\begin{enumerate}
\item[(i)]
The drift $b$ is a continuous map from $\overline{Q}\times\Act$ to $\Rd$,
and Lipschitz in its first
argument uniformly with respect to the second.

\item[(ii)]
The \emph{diffusion matrix} $\upsigma\colon \overline{Q} \to\RR^{d\times d}$
is continuously
differentiable, its derivatives are H\"older continuous, and is non-degenerate in
the sense that the minimum eigenvalue of
$a(x)=\bigl[a^{ij}(x)\bigr]\df\upsigma(x)\upsigma\transp(x)$
on $Q$ is bounded away from zero.

\item[(iii)]
The \emph{reflection direction}
$\gamma = [\gamma_{1}(x), \dotsc, \gamma_d(x)]\transp \colon\Rd \to \Rd$ is co-normal,
that is,
$\gamma$ is given by
\begin{equation*}
\gamma_i(x) \,=\, \sum_{j=1}^d a^{ij}(x) n_{j}(x)\,,\quad x\in\partial Q\,,
\end{equation*}
where $\vec n(x)= [n_{1}(x), \dotsc, n_d(x)]\transp$ is the unit outward normal.
\end{enumerate}

We let $\Usm$ denote the set of stationary Markov controls, that is,
the set of Borel measurable functions $v\colon\Rd\to\Act$.
Given $\xi\in\Uadm$, the stochastic differential equation in \cref{E-sde}
has a unique strong solution.
The same is true for the class of Markov controls \cite[Chapter~2]{ABG}.
Let $\Prob^x_\xi$ and $\Exp^x_\xi$ denote the
probability measure and expectation operator on the canonical space of the
process controlled under $\xi\in\Uadm$, with initial condition $X_0=x$.

Given a continuous \emph{reward function} $\rc\colon\overline{Q}\times\Act\to\RR$,
which is Lipschitz continuous in its first
argument uniformly with respect to the second,
the objective of the risk-sensitive reward problem is to maximize
\begin{equation}\label{E-JQ}
J^x_\xi(\rc;Q) \,=\, \liminf_{T\to\infty}\, \frac{1}{T}\,
\log \Exp^x_\xi \Bigl[\E^{\int^T_0 \rc(X_t,\xi_t)\,\D t} \Bigr]\,,\quad x\in Q\,,
\end{equation}
over all admissible controls $\xi\in\Uadm$.
We define
\begin{equation}\label{E-J*}
J^x_*(\rc;Q) \,\df\, \sup_{\xi\in\Uadm}\,J^x_\xi(\rc;Q)\,,\quad x\in Q\,,\quad
\text{and\ \ } J_*(\rc;Q) \,\df\, \sup_{x\in Q}\,J^x_*(\rc;Q)\,.
\end{equation}
The solution of this problem shows that $J^x_*(\rc;Q)$ does not depend on $x$.

We let
\begin{equation*}
\Cc^{2}_{\gamma}(\overline{Q}) \,\df\, \bigl\{ f \in \Cc^{2}(\overline{Q})\,\colon\,
\langle\nabla f, \gamma\rangle \,=\, 0 \text{\ on\ } \partial{Q} \bigr\}\,,
\end{equation*}
and $\Cc^{2}_{\gamma,+}(\overline{Q})$ denote its subspace consisting of nonnegative
functions.

For $f \in \Cc^{2}(\overline{Q})$, and $\xi\in\Act$, we define
\begin{equation}\label{E-gen}
\begin{aligned}
\Lg_\xi f(x) &\,\df\, \tfrac{1}{2}\trace\left(a(x)\nabla^{2}f(x)\right)
+ \bigl\langle b(x,\xi),\grad f(x)\bigr\rangle\,,\\
\cG f(x) &\,\df\, \tfrac{1}{2}\trace\left(a(x)\nabla^{2}f(x)\right)
+ \max_{\xi\in\Act}\, \bigl[\bigl\langle b(x,\xi),
\grad f(x)\bigr\rangle + \rc(x,\xi) f(x)\bigr]\,.
\end{aligned}
\end{equation}

We summarize some results from \cite{ABK-16} that are needed in \cref{T2.1} below.
Without loss of generality we assume that $0\in Q$.

Consider the operator $S_t\colon\Cc(\overline{Q})\to\Cc(\overline{Q})$, $t\in\RR_+$,
defined by
\begin{equation*}
S_{t}f(x)\,\df\,\sup_{\xi\in\Uadm}\,
\Exp^x_\xi\Bigl[e^{\int_{0}^t \rc(X_s,\xi_s)\,\D{s}}f(X_t)\Bigr]\,.
\end{equation*}
The characterization of $S_t$ is exactly analogous to \cite[Theorem~3.2]{ABK-16},
which considers the minimization problem (see also \cite[Remark~4.2]{ABK-16}).
Specifically, for each $f \in C^{2+\delta}_{\gamma}(\overline{Q})$,
and $T>0$, the quasi-linear parabolic p.d.e.
$\partial_t\,u(t,x) = \cG u(t,x)$
in $(0,T]\times Q$,
with $u(0,x) = f(x)$ for all $x \in \overline{Q}$, and
$\langle\nabla u(t,x), \gamma(x)\rangle = 0$ for all
$(t,x) \in (0,T]\times\partial{Q}$,
has a unique solution in
$\Cc^{1+\nicefrac{\delta}{2},2+\delta}\bigl([0,T]\times\overline{Q}\bigr)$.
This solution has the stochastic representation
$u(t,x) \,=\, S_{t}f(x)$ for all $(t,x)\in[0,T]\times\overline{Q}$.

Following the analysis in \cite{ABK-16} we obtain the following characterization
of $J_*(\rc;Q)$ defined in \cref{E-J*}.

\begin{theorem}\label{T2.1}
There exists a unique pair
$(\rho,V) \in \RR\times \Cc^{2}_{\gamma,+}(\overline{Q})$
which solves
\begin{equation}\label{ET2.1A}
\cG V \,=\,\rho V \text{\ \ in\ }Q\,,\qquad
\langle\nabla V,\gamma\rangle \,=\, 0 \text{\ \ on\ } \partial{Q}\,,
\quad\text{and\ \ } V(0)\,=\,1\,.
\end{equation}
Also,
$S_{t}V(x)=e^{\rho t}V(x)$, for $(x,t)\in\overline{Q}\times[0,\infty)$.
In addition, we have
\begin{equation*}
J^x_*(\rc;Q)\,=\,J_*(\rc;Q)
\,=\,\rho\qquad \forall\, x\in Q\,,
\end{equation*}
and
\begin{equation}\label{ET2.1B}
\rho \,=\, \adjustlimits\inf_{f \in \Cc^{2}_{\gamma,+}(\overline{Q}),\, f>0\;}
\sup_{x\in\overline{Q}}\;\frac{\cG f(x)}{f(x)} 
= \adjustlimits\sup_{f \in \Cc^{2}_{\gamma,+}(\overline{Q}),\, f>0\;}
\inf_{x\in\overline{Q}}\;\frac{\cG f(x)}{f(x)}\,.
\end{equation}
\end{theorem}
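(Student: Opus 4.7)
The plan is to obtain the eigenpair $(\rho,V)$ by applying the nonlinear Krein--Rutman theorem of Ogiwara to the Feynman--Kac semigroup $\{S_t\}_{t\ge0}$ on the Banach lattice $\Cc(\overline Q)$. Since $c$ is bounded and continuous and $\Act$ is compact, $S_t$ is well-defined, and the sup-of-affine-operators form makes it convex and positively $1$-homogeneous, hence order-preserving. The uniform ellipticity of $a$ together with the co-normal condition delivers, via the parabolic Schauder estimates of \cite{ABK-16} cited just above, that for every $t>0$ the operator $S_t$ maps $\Cc(\overline Q)$ into a relatively compact subset of $\Cc^{1+\delta}_\gamma(\overline Q)$, and the Hopf boundary-point lemma gives strong positivity (an $S_tf$ with $f\ge 0$, $f\not\equiv 0$, is strictly positive on $\overline Q$). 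Ogiwara's theorem then yields a unique (up to positive scalar) normalized eigenvector $V>0$ and a scalar $\rho$ with $S_tV=e^{\rho t}V$. Differentiating at $t=0$, or invoking the parabolic-to-elliptic regularity also recalled from \cite{ABK-16}, $V\in \Cc^{2}_{\gamma,+}(\overline Q)$ satisfies $\cG V=\rho V$ in $Q$ with $\langle\nabla V,\gamma\rangle=0$ on $\partial Q$; the normalization $V(0)=1$ removes the scaling ambiguity.

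To identify $\rho$ with $J_*(\rc;Q)$, I would argue by two-sided comparison. For the upper bound, fix $\xi\in\Uadm$ and apply It\^o's formula to $e^{\int_0^t \rc(X_s,\xi_s)\,\D s} V(X_t)$; the drift term is $(\Lg_{\xi_t}V+\rc(\cdot,\xi_t)V-\rho V)(X_t)\le0$ by the pointwise HJB inequality $\Lg_\xi V+\rc(\cdot,\xi)V\le \cG V=\rho V$, while the boundary local-time term vanishes because $\langle\nabla V,\gamma\rangle=0$. Since $V$ is bounded above and below by positive constants on $\overline Q$, taking expectations and a logarithm yields $J^x_\xi(\rc;Q)\le\rho$. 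For the matching lower bound, a measurable selector $v_*\in\Usm$ of the $\xi$-maximizer in \cref{E-gen} (guaranteed by continuity of $b,\rc$ in $\xi$ and compactness of $\Act$) turns the HJB inequality into equality along $v_*$; the same It\^o identity becomes an equality (after localization by the exit times of the reflected diffusion, which are a.s.\ infinite), giving $J^x_{v_*}(\rc;Q)=\rho$.

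For the Collatz--Wielandt characterization in \cref{ET2.1B}, the eigenfunction $V$ itself makes both one-sided inequalities trivial: $\cG V/V\equiv\rho$ forces $\inf_f\sup_x\cG f/f\le\rho\le\sup_f\inf_x\cG f/f$. The reverse inequalities come from the sub/super-solution principle for the semigroup $S_t$: if $f\in\Cc^{2}_{\gamma,+}(\overline Q)$ is strictly positive and $\cG f\le\alpha f$ with $\langle\nabla f,\gamma\rangle=0$, the parabolic maximum principle applied to $u(t,x)-e^{\alpha t}f(x)$, where $u$ solves $\partial_t u=\cG u$ with $u(0,\cdot)=f$, yields $S_tf\le e^{\alpha t}f$; comparing with the eigen-identity $S_tV=e^{\rho t}V$ (via $S_t(\kappa V)\le S_tf$ for $\kappa=\min f/V$, using monotonicity and $1$-homogeneity of $S_t$) forces $\rho\le\alpha$. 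The symmetric argument with $\cG f\ge\alpha f$ gives $\alpha\le\rho$, completing \cref{ET2.1B}.

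The step I expect to be the main obstacle is verifying the hypotheses of Ogiwara's nonlinear Krein--Rutman theorem for $S_t$ in the correct function-space setting, in particular the strong positivity at the boundary under co-normal reflection and the compactness needed for simplicity of the principal eigenvalue; both rest on the parabolic regularity and Hopf-type results for reflected diffusions developed in \cite{ABK-16}, which must be transcribed carefully to the maximization (rather than minimization) setting considered there.
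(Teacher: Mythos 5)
Your proposal is correct and follows essentially the same route as the paper: the paper disposes of \cref{T2.1} by citing Lemma~2.1, Lemma~4.5 and Remark~4.2 of \cite{ABK-16}, and the program you reconstruct (nonlinear Krein--Rutman/Ogiwara applied to the semigroup $S_t$, It\^o-based verification to identify $\rho$ with $J_*(\rc;Q)$, and sub/supersolution comparison for the Collatz--Wielandt formula \cref{ET2.1B}) is precisely the method of that reference, transcribed to the maximization setting as the paper itself indicates.
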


\begin{proof}
\Cref{ET2.1B} is the result in \cite[Lemma~2.1]{ABK-16}, while the other assertions
follow from Lemma~4.5 and Remark~4.2 in \cite{ABK-16}.
\end{proof}

\subsection{A variational formula}

Define
\begin{equation*}
\sR(x,\xi,y) \,\df\, \rc(x,\xi) - \frac{1}{2}\abs{\upsigma\transp(x) y}^2\,,\quad
(x,\xi,y)\in \overline{Q}\times\Act\times\Rd\,,
\end{equation*}
and an operator
$\sA\colon \Cc^{2}_{\gamma}(\overline{Q}) \to \Cc(\Rd\times\Act\times\Rd)$ by
\begin{equation*}
\sA\phi(x,\xi,y)\,\df\,\frac{1}{2}\trace\left(a(x)\nabla^{2}\phi(x)\right)
+ \bigl\langle b(x,\xi)+ a(x)y, \nabla \phi(x)\bigr\rangle\,.
\end{equation*}
It is important to note that if $f \in \Cc^{2}_{\gamma,+}(\overline{Q})$ is a positive
function and $g=\log f$, then
\begin{equation*}
\frac{\cG f(x)}{f(x)} \,=\, \adjustlimits\max_{\xi\in\Act}\max_{y\in\Rd}\;
\bigl[\sA g(x,\xi,y) + \sR(x,\xi,y)\bigr]\,.
\end{equation*}
Thus, we obtain from \cref{ET2.1B} that
\begin{align}
\rho &\,=\, \adjustlimits\inf_{g \in \Cc^{2}_{\gamma}(\overline{Q})\,}
\sup_{x\in\overline{Q}\;}\sup_{\xi\in\Act,\,y\in\Rd}\,
\Bigl(\sA g(x,\xi,y) + \sR(x,\xi,y) \Bigr)
\label{E-infsup}\\
&\,=\, \adjustlimits\sup_{g \in \Cc^{2}_{\gamma}(\overline{Q})\,}
\inf_{x\in\overline{Q}\;}\sup_{\xi\in\Act,\,y\in\Rd}\,
\Bigl(\sA g(x,\xi,y) + \sR(x,\xi,y)\Bigr)
\nonumber\,.
\end{align}
We let
\begin{equation}\label{E-F}
F(g,\mu) \,\df\,
\int_{\overline{Q}\times\Act\times\Rd}
\bigl(\sA g(x,\xi,y)+\sR(x,\xi,y)\bigr)\,\mu(\D{x},\D{\xi},\D{y})
\end{equation}
for $g\in\Cc^{2}_{\gamma}(\overline{Q})$
and $\mu\in\cP(\overline{Q}\times\Act\times\Rd)$.

It is clear that \cref{E-infsup} can be written as
\begin{equation}\label{E-infsup1}
\rho \,=\, \adjustlimits\inf_{g\in\Cc^{2}_{\gamma}(\overline{Q})}
\sup_{\mu\in\cP(\overline{Q}\times\Act\times\Rd)}\,F(g,\mu)\,.
\end{equation}

Let $\eom_{\sA,Q}$ denote the class of infinitesimal
ergodic occupation measures for the operator $\sA$, defined by
\begin{equation*}
\eom_{\sA,Q}\,\df\,\biggl\{
\mu\in \cP(\overline{Q}\times\Act\times\Rd)\,\colon
\int_{\overline{Q}\times\Act\times\Rd} \sA f\,\D\mu=0\quad
\forall\, f\in\Cc^{2}_{\gamma}(\overline{Q})\biggr\}\,.
\end{equation*}
Implicit in this definition is the requirement that
$\int \abs{\sA f}\,\D\mu<\infty$ for all $f\in\Cc^{2}_{\gamma}(\overline{Q})$
and $\mu\in\eom_{\sA,Q}$.
We have the following result.
\begin{theorem}\label{T2.2}
It holds that
\begin{equation}\label{ET2.2A}
\rho \,=\, \adjustlimits\inf_{g \in \Cc^{2}_{\gamma}(\overline{Q})}
\sup_{\mu\in\cP(\overline{Q}\times\Act\times\Rd)}\,F(g,\mu)
\,=\,
\adjustlimits\sup_{\mu\in\cP(\overline{Q}\times\Act\times\Rd)}
\inf_{g \in \Cc^{2}_{\gamma}(\overline{Q})}\,F(g,\mu)\,.
\end{equation}
Moreover, $\cP(\overline{Q}\times\Act\times\Rd)$ may be replaced with
$\eom_{\sA,Q}$ in \cref{ET2.2A}, and thus
\begin{equation*}
\rho \,=\, \sup_{\mu\in\eom_{\sA,Q}}\,
\int_{\overline{Q}\times\Act\times\Rd} \sR(x,\xi,y)\,\mu(\D{x},\D{\xi},\D{y})\,.
\end{equation*}
\end{theorem}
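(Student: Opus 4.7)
The leftmost equality in \cref{ET2.2A} is already contained in \cref{E-infsup1}, and since $\sup_{\mu}\inf_{g}F \le \inf_{g}\sup_{\mu}F = \rho$ holds unconditionally, my plan reduces to two tasks: exhibit a single $\mu^{*}\in\cP(\overline{Q}\times\Act\times\Rd)$ with $\inf_{g}F(g,\mu^{*}) \ge \rho$, and show that $\inf_{g}F(g,\mu) = -\infty$ whenever $\mu\notin\eom_{\sA,Q}$. Granting both, the minimax equality and the representation $\rho = \sup_{\eom_{\sA,Q}}\int\sR\,\D\mu$ follow at once, since for $\mu\in\eom_{\sA,Q}$ the value $F(g,\mu)=\int\sR\,\D\mu$ is independent of $g$ by the definition of $\eom_{\sA,Q}$.

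For the first task I would build $\mu^{*}$ from the principal eigenpair of \cref{T2.1}. Put $\varphis \df \log V$, which belongs to $\Cc^{2}_{\gamma}(\overline{Q})$ because $V$ is positive with $\langle\nabla V,\gamma\rangle = 0$ on $\partial Q$, and choose a Borel measurable selector $v^{*}\colon\overline{Q}\to\Act$ of the argmax of $\xi\mapsto\langle b(x,\xi),\nabla\varphis(x)\rangle+\rc(x,\xi)$; such a selector exists by a standard measurable-selection argument, given that $\Act$ is compact metrizable and the integrand is continuous in $\xi$. The reflected diffusion on $\overline{Q}$ with drift $b(\cdot,v^{*})+a\nabla\varphis$ and co-normal reflection has bounded continuous coefficients and a uniformly elliptic diffusion matrix, so it admits a unique invariant probability measure $\pi$, and the associated generator $\tilde{\Lg}f \df \Lg_{v^{*}}f + \langle a\nabla\varphis,\nabla f\rangle$ satisfies $\int\tilde{\Lg}f\,\D\pi = 0$ for every $f\in\Cc^{2}_{\gamma}(\overline{Q})$. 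I define
\begin{equation*}
\mu^{*}(\D x,\D\xi,\D y) \,\df\, \pi(\D x)\,\delta_{v^{*}(x)}(\D\xi)\,\delta_{\nabla\varphis(x)}(\D y).
\end{equation*}
Since $\sA g(x,v^{*}(x),\nabla\varphis(x)) = \tilde{\Lg}g(x)$, it follows that $\mu^{*}\in\eom_{\sA,Q}$. Taking the logarithmic transform of $\cG V = \rho V$ at $v^{*}$ yields
\begin{equation*}
\tfrac12\abs{\upsigma\transp\nabla\varphis}^{2} + \tfrac12\trace(a\nabla^{2}\varphis) + \langle b(\cdot,v^{*}),\nabla\varphis\rangle + \rc(\cdot,v^{*}) \,=\, \rho,
\end{equation*}
which rearranges to the pointwise identity $\sR(x,v^{*}(x),\nabla\varphis(x)) + \sA\varphis(x,v^{*}(x),\nabla\varphis(x)) = \rho$ on $\overline{Q}$. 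Integrating against $\mu^{*}$ and using $\int\sA\varphis\,\D\mu^{*}=0$ gives $\int\sR\,\D\mu^{*} = \rho$, so $F(g,\mu^{*}) = \rho$ for every $g\in\Cc^{2}_{\gamma}(\overline{Q})$.

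For the second task, fix $\mu \in \cP(\overline{Q}\times\Act\times\Rd) \setminus \eom_{\sA,Q}$. Completing the square in $y$ shows that $\sA g + \sR$ is bounded above by a constant depending only on $g$ and decays quadratically in $\abs{y}$ at infinity; hence if $\int\abs{y}^{2}\,\D\mu = \infty$, then $F(g,\mu) = -\infty$ for every $g$. Otherwise $\int\abs{\sA g}\,\D\mu$ is finite for every $g\in\Cc^{2}_{\gamma}(\overline{Q})$, and by the definition of $\eom_{\sA,Q}$ there exists some $g_{0}$ with $\int\sA g_{0}\,\D\mu \ne 0$; replacing $g_{0}$ by $tg_{0}$ and letting $t\to\pm\infty$ with the appropriate sign drives $F(tg_{0},\mu)$ to $-\infty$. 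Either way $\inf_{g}F(g,\mu) = -\infty$, completing the argument. The main obstacle I foresee lies in the first task: constructing the invariant measure $\pi$ for the twisted reflected diffusion and justifying the identity $\int\tilde{\Lg}f\,\D\pi=0$ on the test class $\Cc^{2}_{\gamma}(\overline{Q})$. Both rely essentially on the co-normal nature of the reflection, which is precisely what renders $\Cc^{2}_{\gamma}(\overline{Q})$ the correct domain for the generator against $\pi$.
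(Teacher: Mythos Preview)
Your argument is correct and follows essentially the same route as the paper: construct $\mu^{*}$ from the invariant measure of the twisted reflected diffusion with drift $b(\cdot,v^{*})+a\nabla\varphis$, verify $\mu^{*}\in\eom_{\sA,Q}$ and $\int\sR\,\D\mu^{*}=\rho$ via the log-transformed eigenvalue equation, and combine this with the observation that $\inf_{g}F(g,\mu)=-\infty$ off $\eom_{\sA,Q}$. The paper asserts the latter fact without justification; your case split on whether $\int\abs{y}^{2}\,\D\mu$ is finite, followed by the scaling $g_{0}\mapsto tg_{0}$, supplies the missing details and is a welcome addition.
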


\begin{proof}
The first equality in \cref{ET2.2A} follows by \cref{E-infsup1}.
We continue to prove the rest of the assertions.
First note that
\begin{equation*}
\adjustlimits\sup_{\mu\in\cP(\overline{Q}\times\Act\times\Rd)}
\inf_{g \in \Cc^{2}_{\gamma}(\overline{Q})}\,F(g,\mu)
\,=\, \Hat\rho
\,\df\, \sup_{\mu\in\eom_{\sA,Q}}\,
\int_{\overline{Q}\times\Act\times\Rd} \sR(x,\xi,y)\,\mu(\D{x},\D{\xi},\D{y})\,,
\end{equation*}
because the infimum on the left hand side is $-\infty$ for $\mu \notin \eom_{\sA, Q}$.
It follows by \cref{E-infsup1} that $\Hat\rho\le\rho$.
Let $v_*$ be a measurable selector from the maximizer of \cref{ET2.1A}, that is,
\begin{equation*}
\bigl\langle b\bigl(x,v_*(x)\bigr),\grad V(x)\bigr\rangle
+ \rc\bigl(x,v_*(x)\bigr) V(x)\,=\,
\max_{\xi\in\Act}\,
\bigl[\bigl\langle b(x,\xi),\grad V(x)\bigr\rangle + \rc(x,\xi) V(x)\bigr]\,.
\end{equation*}
With $\phi\df\log V$, \cref{ET2.1A} takes the form
\begin{equation}\label{PT2.2Xa}
\sA\phi\bigl(x,v_*(x),\grad \phi(x)\bigr) + \sR\bigl(x,v_*(x),\nabla\phi(x)\bigr)
\,=\,\rho\,.
\end{equation}
The reflected diffusion
with drift $b\bigl(x,v_*(x)\bigr)+a(x)\grad\phi(x)$ is of course
exponentially ergodic.
Let $\eta_*$ denote its invariant probability measure.
Then, \cref{PT2.2Xa} implies
that
\begin{equation}\label{PT2.2Xb}
\int_Q \sR\bigl(x,v_*(x),\nabla\phi(x)\bigr)\,\eta_*(\D{x})\,=\,\rho\,.
\end{equation}
Let $\mu_*\in\cP(\overline{Q}\times\Act\times\Rd)$ be defined by
\begin{equation*}
\mu_*(\D{x},\D{\xi},\D{y})\,\df\, \eta_*(\D{x})\,\delta_{v_*(x)}(\D{\xi})\,
\delta_{\nabla\phi(x)}(\D{y})\,,
\end{equation*}
where $\delta_y$ denotes the Dirac mass at $y$.
Then $\mu_*$ is an ergodic occupation measure for the controlled reflected diffusion
with drift $b(x,\xi)+a(x)y$, and thus $\mu_*\in\eom_{\sA,Q}$.
Let $g \in \Cc^{2}_{\gamma}(\overline{Q})$ be arbitrary.
Then
\begin{equation*}
F(g,\mu_*)\,=\,
\int_{\overline{Q}\times\Act\times\Rd} \sR(x,\xi,y)\,\mu_*(\D{x},\D{\xi},\D{y})
\,=\,\rho\,,
\end{equation*}
where the second equality follows by \cref{PT2.2Xb}.
Thus $\Hat\rho\ge\rho$, and since we have already asserted the reverse inequality,
we must have equality.
This establishes \cref{ET2.2A}, and also proves the last assertion
of the theorem.
\end{proof}

\section{The risk-sensitive reward problem on \texorpdfstring{$\Rd$}{}}

In this section we study the risk-sensitive reward maximization problem
on $\Rd$.
We consider a controlled diffusion of the form
\begin{equation}\label{E-sde1}
\D X_t \,=\, b(X_t,\xi_t)\,\D t + \upsigma (X_t)\,\D W_t\,.
\end{equation}
All random processes in \cref{E-sde1} live in a complete
probability space $(\Omega,\sF,\Prob)$.
The control process $\{\xi_t\}_{t\ge0}$ lives in a compact metrizable space $\Act$.

We approach the problem in $\Rd$ as a limit of Dirichlet or Neumann eigenvalue
problems on balls $B_r$, $r>0$.
Differentiability of the matrix $a$ can be relaxed here.
Consider the eigenvalue problem on a ball $B_r$, with Neumann boundary
conditions, and the reflection direction along the exterior normal
$\vec n(x)$ to $B_r$ at $x$. The drift $b:\Bar{B}_r\times\Act\to\Rd$ is continuous,
and Lipschitz in its first argument uniformly with respect to the second.
The diffusion matrix $a$ is Lipschitz continuous on $\Bar{B}_r$ and
non-degenerate.
Let $\rho_r$ denote the principal eigenvalue on $B_r$ under
Neumann boundary conditions of the operator
$\cG$ defined in \cref{E-gen}.
We refer to $\rho_r$ as the \emph{Neumann eigenvalue} on $B_r$.
It follows from the results in \cite{Patrizi-09}
(see in particular Theorems~5.1, 6.6, and Proposition~7.1) that there exists
a unique $V_r\in\Cc^2(B_r)\cap \Cc^{0,1}(\Bar{B}_r)$, with $V_r>0$ on $B_r$
and $V_r(0)=1$, solving
\begin{equation}\label{E-HJBN}
\tfrac{1}{2}\trace\left(a(x)\nabla^{2}V_r(x)\right)
+ \max_{\xi\in\Act}\, \bigl[\bigl\langle b(x,\xi),
\grad V_r(x)\bigr\rangle + \rc(x,\xi) V_r(x)\bigr]\,=\, \rho_r V_r(x)\,,
\end{equation}
and $\langle \nabla V_r(x),\vec n(x)\rangle=0$ on $\partial B_r$.
We also refer the reader to \cite[Theorem~12.1, p.~195]{Lady}.

We adopt the following structural hypotheses on the
coefficients of \cref{E-sde1} and the
reward function $\rc$ have the following structural properties. 

\begin{assumption}\label{A3.1}
\begin{enumerate}
\item[\ttup i]
The drift $b\colon\RR^{d}\times\Act\to\RR^{d}$ is continuous,
 and for some constant $C_R>0$ depending on $R>0$, we have
\begin{align}\label{E-growth}
\abs{b(x,\xi) - b(y,\xi)} + \norm{\upsigma(x) - \upsigma(y)} &\,\le\,C_{R}\,\abs{x-y}
\qquad\forall\,x,y\in B_R\,,\ \forall\, \xi\in\Act\,,\nonumber\\
\sum_{i,j=1}^{d} a^{ij}(x)\zeta_{i}\zeta_{j}
&\,\ge\, C^{-1}_{R} \abs{\zeta}^{2}
\qquad\forall\, (x,\zeta)\in B_{R}\times\Rd\,,\nonumber
\intertext{and}
\abs{b(x,\xi)}^2 + \norm{\upsigma(x)}^{2} &\,\le\, C_0
\bigl(1 + \abs{x}^{2}\bigr) \qquad \forall\, (x,\xi)\in\RR^{d}\times\Act\,,
\end{align}
where $\norm{\upsigma}\df\bigl(\trace\, \upsigma\upsigma\transp\bigr)^{\nicefrac{1}{2}}$
denotes the Hilbert--Schmidt norm of $\upsigma$.

\item[\ttup{ii}]
The reward function $\rc\colon\Rd\times\Act\to\RR$ is continuous
and locally Lipschitz in its first argument
uniformly with respect to $\xi\in\Act$, is bounded from above in $\Rd$,
and $x\mapsto\max_{\xi\in\Uadm}\,\abs{\rc(x,\xi)}$ has polynomial growth in $\abs{x}$.

\item[\ttup{iii}]
We assume that the Neumann eigenvalues $\rho_n$ satisfy
\begin{equation}\label{EA3.1A}
\rhos\,\df\,\limsup_{n\to\infty}\,\rho_n\,>\,
\lim_{r\to\infty}\,\sup_{(x,\xi)\in B_r^c\times\Act}\,\rc(x,\xi)\,.
\end{equation}
\end{enumerate}
\end{assumption}

\Cref{A3.1} is enforced throughout the rest of the paper, unless mentioned
otherwise.
Part (i) of this assumption are the usual hypotheses that guarantee existence
and uniqueness of strong solutions to \cref{E-sde1} under any admissible control.

\begin{remark}
\Cref{EA3.1A} is a version of the near-monotone assumption,
which is often used in ergodic control problems (see \cite{ABG}).
This has the effect of penalizing instability, ensuring tightness
of laws for optimal controls.
There are two important cases where \cref{EA3.1A} is always satisfied.
First, when $-\rc$ is inf-compact.
In this case we have $\rhos\le \sup_{\Rd\times\Act} \rc$ and $\rhos>-\infty$, since the
Dirichlet eigenvalues which are a lower bound for $\rhos$ are increasing
as a function of the domain \cite[Lemma~2.1]{ABS-19}.
Second, when $\rc$ is positive and vanishes at infinity, and under some
stationary Markov control the process $\{X_t\}_{t\ge0}$ in \cref{E-sde1} is recurrent.
This can be established by comparing $\rho_n$ with the Dirichlet eigenvalue
on $B_n$ (see \cref{S3.2}), and using \cite[Theorems~2.6 and 2.7\,(ii)]{ABS-19}.
For related studies concerning the class of running reward functions vanishing
at infinity, albeit in the uncontrolled case, see
\cite{Ichihara-13b,Ichihara-15,ABS-19,Armstrong-09}.
See also \cite[Theorem~2.12]{AB-18b} which studies the Collatz--Wielandt formula
for the risk-sensitive minimization problem.
\end{remark}

Recall that $\Usm$ denotes the set of stationary Markov controls.
For $v\in\Usm$, we use the simplifying notation
\begin{equation*}
b_v(x) \,\df\, b\bigl(x,v(x)\bigr)\,,\qquad
\rc_v(x) \,\df\, \rc\bigl(x,v(x)\bigr)\,,
\end{equation*}
and define $\Lg_v$ analogously.

We next review some properties of eigenvalues of linear and semilinear operators
on $\Rd$.
For $f\in\Cc^2(\Rd)$ and $\psi\in\Sobl^{2,d}(\Rd)$, define
\begin{equation}\label{E-twist}
\widetilde\Lg^\psi_\xi f \,\df\,
\Lg_\xi f + \langle a\grad\psi,\grad f\rangle\,,
\end{equation}
with $\Lg_\xi$ as in \cref{E-gen}.
Let $v\in\Usm$.
Suppose that a positive function $\Psi\in\Sobl^{2,d}(\Rd)$ and $\lambda\in\RR$
solve the equation
\begin{equation}\label{E-eigen}
\Lg_v \Psi(x) + \rc_v(x) \Psi(x) \,=\, \lambda \Psi(x)\quad\text{a.e.\ } x\in\Rd\,.
\end{equation}
We refer to any such solution $(\Psi,\lambda)$ as an \emph{eigenpair of
the operator $\Lg_v+\rc_v$}, and we say that $\Psi$ is an eigenvector
with eigenvalue $\lambda$.
Note that by eigenvector we always mean a positive function.
Let $\psi=\log\Psi$.
We refer to the It\^o stochastic differential
equation
\begin{equation}\label{E-sde2}
\D \widetilde{X}_t \,=\, \bigl(b_v(\widetilde{X}_t)
+ a(\widetilde{X}_t)\grad\psi(\widetilde{X}_t)\bigr)
\,\D t + \upsigma (\widetilde{X}_t)\,\D W_t
\end{equation}
as the \emph{twisted} SDE, and to its solution
as the \emph{twisted} process corresponding to $\Psi$.
Clearly $\widetilde\Lg^\psi_v$ is the extended generator of \cref{E-sde2}.

We define the \emph{generalized principal eigenvalue} $\lambda_v=\lambda_v(\rc_v)$ of
the operator $\Lg_v + \rc_v$ by
\begin{equation}\label{E-princ1}
\lambda_v \,\df\,\inf\,\Bigl\{\lambda\in\RR\,
\colon \exists\, \phi\in\Sobl^{2,d}(\Rd),\ \phi>0, \
\Lg_{v}\phi + (\rc_{v}-\lambda)\phi\le 0 \text{\ a.e.\ in\ } \Rd\Bigr\}\,.
\end{equation}
A \emph{principal eigenvector} $\Psiv\in\Sobl^{2,d}(\Rd)$
is a positive solution of \cref{E-eigen} with $\lambda=\lambda_v$.
A principal eigenvector is also called a \emph{ground state}, and
we refer to the corresponding twisted SDE and twisted process as
a \emph{ground state SDE} and \emph{ground state process} respectively.
Unlike what is common in criticality theory, our definition of a
ground state does not require the minimal growth property of the principal eigenfunction
(see \cite{ABG-19}).

An easy calculation shows that any eigenpair
$(\Psi,\lambda)$ of $\Lg_v + \rc_v$ satisfies
\begin{equation}\label{E-eigenT}
\widetilde\Lg^\psi_v \Psi^{-1}(x) - \rc_v(x) \Psi^{-1}(x)
\,=\, -\lambda \Psi^{-1}(x)\quad\text{a.e.\ } x\in\Rd\,,
\end{equation}
with $\psi=\log\Psi$.
In other words, $(\Psi^{-1},-\lambda)$ is an eigenpair of $\widetilde\Lg^\psi_v-\rc_v$.
Note also that $(\psi,\lambda)$ is a solution to the `linear' eigenvalue equation
\begin{equation}\label{E-eigen2}
\widetilde\Lg^\psi_v \psi - \tfrac{1}{2}\abs{\upsigma\transp\nabla\psi}^2
+ \rc_v \,=\, \lambda\,,
\end{equation}
and that this equation can also be written as
\begin{equation}\label{E-eigen3}
\Lg_v \psi + \max_{y\in\Rd}\,\Bigl[\langle a y,\nabla\psi\rangle
 -\tfrac{1}{2}\abs{\upsigma\transp y}^2\Bigr]
+ \rc_v \,=\, \lambda\,.
\end{equation}

An extensive study of generalized principal eigenvalues
with applications to risk-sensitive control can be found in \cite{AB-18,ABS-19}.
In these papers, the `potential' $\rc_v$ is assumed to be bounded below in $\Rd$,
so the results cannot be quoted directly.
It is not our intention to reproduce all these results for potentials which
are bounded above, so we only focus on results that are needed later in this paper.
We only quote results in \cite{AB-18,ABS-19} which do not
depend on the assumption that $\rc_v$ is bounded below.
Generally speaking, caution should be exercised with arguments
in \cite{AB-18,ABS-19} that employ the Fatou lemma.
On the other hand, since $\rc$ usually appears in the exponent, invoking Fatou's lemma
hardly ever poses any problems.

Suppose that the twisted process in \cref{E-sde2} is regular, that is,
the solution exists for all times.
Then, an application of \cite[Lemma~2.3]{ABS-19} shows that
an eigenvector $\Psi$ has the stochastic representation
(semigroup property)
\begin{equation*}
\Psi(x) \,=\, \Exp^x_v\Bigl[\E^{\int_0^{t}[\rc_{v}(X_s)-\lambda]\, \D{s}}\,
\Psi(X_t)\Bigr]\,.
\end{equation*}

Recall that $\uuptau_r$ denotes the first hitting time of the ball $B_r$, for $r>0$.
We need the following lemma.

\begin{lemma}\label{L3.1}
We assume only \cref{A3.1}\,\textup{(i)--(ii)}.
The following hold.
\begin{enumerate}
\item[\ttup a]
If $(\Psi,\lambda)$ is an eigenpair of $\Lg_v + \rc_v$ under some $v\in\Usm$,
and the twisted process in \cref{E-sde2} is exponentially ergodic, then
 we have the stochastic representation
\begin{equation}\label{EL3.1A}
\Psi(x) \,=\, \Exp^x_v \Bigl[\E^{\int_0^{\uuptau_r}[\rc_{v}(X_s)-\lambda]\, \D{s}}\,
\Psi(X_{\uuptau_r})\,\Ind_{\{\uuptau_r<\infty\}}\Bigr]\quad\forall\,x\in\Bar{B}_r^c\,,
\ \forall\,r>0\,.
\end{equation}
In addition, $\lambda=\lambda_v$, the generalized principal
eigenvalue of $\Lg_v + \rc_v$, and the ground state $\Psi=\Psiv$ is unique up
to multiplication by a positive constant.

\item[\ttup b]
Any eigenpair $(\Psi,\lambda)\in\Sobl^{2,d}(\Rd)\times\Rd$
of $\Lg_v + \rc_v$ satisfying \cref{EL3.1A} is a principal eigenpair,
and $\lambda$ is a simple eigenvalue.
\end{enumerate}
\end{lemma}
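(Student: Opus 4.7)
The plan has three stages: derive the stochastic representation \cref{EL3.1A} in part \ttup{a} via an It\^o--Girsanov argument, use it to identify $\lambda$ with $\lambda_v$ and prove uniqueness of the ground state, and finally adapt the comparison step to prove part \ttup{b} without using ergodicity of the twisted process.

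For the representation, I would apply It\^o's formula (in the $\Sobl^{2,d}$ sense) under $\Prob^x_v$ to the nonnegative process
$$M_t \,\df\, \Psi(X_t)\,\exp\!\Bigl(\int_0^t [\rc_v(X_s) - \lambda]\,\D{s}\Bigr)\,.$$
By \cref{E-eigen}, $M$ is a local martingale. I would localize at $\uuptau_r \wedge \uptau_R \wedge t$ and invoke Girsanov via the density $M_{\cdot}/\Psi(x)$: under the resulting measure $\widetilde\Prob^x_v$, the process satisfies the twisted SDE \cref{E-sde2}, hence inherits exponential ergodicity, so $\uuptau_r$ has exponential moments under $\widetilde\Prob^x_v$. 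Combining these moments with the upper bound on $\rc$ from \cref{A3.1}\,\ttup{ii} bounds the integrand and lets me pass $R \to \infty$ and $t \to \infty$, producing \cref{EL3.1A}; the indicator $\Ind_{\{\uuptau_r<\infty\}}$ appears because under $\Prob^x_v$ the event $\{\uuptau_r = \infty\}$ may carry positive mass, on which $M_{\uuptau_r}$ is taken to vanish. To identify $\lambda = \lambda_v$, the inequality $\lambda_v \le \lambda$ is immediate from \cref{E-princ1} since $\Psi$ is admissible. For the reverse, take any $\phi > 0$ and $\lambda'$ with $\Lg_v\phi + (\rc_v-\lambda')\phi \le 0$; Dynkin's formula yields a supersolution version of \cref{EL3.1A}, and dividing the two identities and rewriting under $\widetilde\Prob^x_v$ produces a factor $\E^{(\lambda-\lambda')\uuptau_r}$ that blows up as $r \to \infty$ unless $\lambda' \ge \lambda$, while $\phi/\Psi$ remains locally bounded by Harnack. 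Uniqueness then follows because the ratio $w \df \Psi_1/\Psi_2$ of any two normalized ground states satisfies the homogeneous twisted equation $\widetilde\Lg^{\log\Psi_2}_v w = 0$, is bounded above and below on $\partial B_r$ by Harnack-type estimates, and must be constant by the Liouville property for the positive recurrent twisted diffusion.

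Part \ttup{b} replaces exponential ergodicity by \cref{EL3.1A} as the sole structural input. The identification $\lambda = \lambda_v$ proceeds as above: for any supersolution $(\phi,\lambda')$, Dynkin produces $\phi(x) \ge \Exp^x_v\!\bigl[\E^{\int_0^{\uuptau_r}[\rc_v-\lambda']\D{s}}\phi(X_{\uuptau_r})\Ind_{\{\uuptau_r<\infty\}}\bigr]$, and comparing with \cref{EL3.1A} for $\Psi$ as $r \to \infty$ forces $\lambda' \ge \lambda$. For simplicity, for any other eigenvector $\Psi'$ with the same $\lambda$, the hypothesis provides the same integral identity \cref{EL3.1A} for $\Psi'$; writing $c_r \df \sup_{\partial B_r}(\Psi'/\Psi)$ and using linearity gives $\Psi' \le c_r \Psi$ on $\Bar{B}_r^c$, and the analogous lower bound with $\inf$ pinches $\Psi'/\Psi$ to a single constant, which must be $1$ after normalization. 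The main obstacle throughout is justifying the limit passages $R \to \infty$ and $t \to \infty$, since $M$ is only a local martingale; the necessary uniform integrability must be harvested from the upper bound on $\rc$ in \cref{A3.1}\,\ttup{ii} together with either the exponential moments of $\uuptau_r$ under $\widetilde\Prob^x_v$ (in part \ttup{a}) or, more subtly, the representation \cref{EL3.1A} itself (in part \ttup{b}).
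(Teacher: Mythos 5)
Your route to the representation \cref{EL3.1A} in part (a) — It\^o's formula for the eigenfunction, Girsanov change of measure to the twisted process \cref{E-sde2}, and exponential moments of $\uuptau_r$ plus recurrence under the twisted measure to kill the time and space truncations — is essentially the paper's argument, and your Liouville-type uniqueness argument for two ground states is a valid alternative in part (a), where recurrence of the twisted process is available. The gaps are in the identification $\lambda=\lambda_v$ (both parts) and in the simplicity claim of part (b).

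First, the mechanism you give for $\lambda'\ge\lambda$ does not work. Rewriting the supersolution inequality under the twisted measure gives, for $x\in\Bar{B}_r^c$, $\phi(x)/\Psi(x)\ge\widetilde\Exp^x_{\psi,v}\bigl[(\phi/\Psi)(X_{\uuptau_r})\,\E^{(\lambda-\lambda')\uuptau_r}\bigr]$. The factor $\E^{(\lambda-\lambda')\uuptau_r}$ does \emph{not} blow up as $r\to\infty$: $\uuptau_r$ is the hitting time of $B_r$, hence nonincreasing in $r$; for fixed $r$, exponential ergodicity only guarantees finiteness of $\widetilde\Exp^x_{\psi,v}[\E^{\delta\uuptau_r}]$ for small $\delta$, and even if this moment were infinite for $\delta=\lambda-\lambda'$ no contradiction follows, because the left-hand side $\phi(x)/\Psi(x)$ is not controlled in $x$; moreover, for a fixed $x$ the representation is only valid while $x\in\Bar{B}_r^c$, so letting $r\to\infty$ is not even available. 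What the exact representation for $\Psi$ together with the one-sided (It\^o--Fatou) inequality for $\phi$ actually buys you is $\inf_{\Bar{B}_r^c}\phi/\Psi\ge\inf_{\partial B_r}\phi/\Psi$, i.e.\ the infimum of the ratio is attained in the compact set $\Bar{B}_r$. The missing step is then the strong maximum principle: after scaling $\phi$ to touch $\Psi$ from above, the ratio satisfies $\widetilde\Lg^\psi_v(\phi/\Psi)\le(\lambda'-\lambda)(\phi/\Psi)$ and attains an interior minimum, so it must be constant, forcing $\lambda'=\lambda$; this single argument also yields uniqueness, and it is exactly what is needed in part (b), where the time-asymptotic or Liouville arguments based on ergodicity are unavailable.

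Second, in part (b) you assert that any other eigenvector $\Psi'$ with the same eigenvalue ``satisfies the same integral identity \cref{EL3.1A}.'' That is not part of the hypothesis: \cref{EL3.1A} is assumed only for the given pair $(\Psi,\lambda)$, and without ergodicity there is no way to derive it for $\Psi'$ (if every eigenvector automatically satisfied it, part (b) would be vacuous). For a competitor $\Psi'$ (or any supersolution with $\lambda'\le\lambda$), It\^o plus Fatou gives only the one-sided bound $\Psi'(x)\ge\Exp^x_v\bigl[\E^{\int_0^{\uuptau_r}[\rc_v(X_s)-\lambda']\,\D{s}}\,\Psi'(X_{\uuptau_r})\Ind_{\{\uuptau_r<\infty\}}\bigr]$, so your pinching of $\Psi'/\Psi$ between the $\sup$ and $\inf$ over $\partial B_r$ rests on an identity you do not have; the correct conclusion again comes from combining this one-sided bound with \cref{EL3.1A} for $\Psi$ and the maximum-principle comparison of the ratio described above.
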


\begin{proof}
Combining the proof of \cite[Theorem~2.2]{ABS-19} with
\cite[Theorem~3.1]{ABS-19}, we deduce that for every $r>0$, there exists a $\delta>0$
such that
\begin{equation}\label{PL3.1A}
\Exp^x_v\Bigl[\E^{\int_0^{\uuptau_r} [\rc_v(X_s)-\lambda + \delta]\,\D{s}}
\,\Ind_{\{\uuptau_r<\infty\}}\Bigr] \,<\, \infty\,,
\quad x\in B_r^c.
\end{equation}
Applying the It\^o formula to \cref{E-eigen} we obtain
\begin{equation}\label{PL3.1B}
\begin{aligned}
\Psi(x) &\,=\, \Exp^x_v \Bigl[\E^{\int_0^{t\wedge\uuptau_r\wedge\uptau_n}
[\rc_{v}(X_s)-\lambda]\, \D{s}}\,
\Psi(X_{t\wedge\uuptau_r\wedge\uptau_n})\Bigr]\\
&\,=\, \Exp^x_v \Bigl[\E^{\int_0^{\uuptau_r} [\rc_{v}(X_s)-\lambda]\, \D{s}}\,
\Psi(X_{\uuptau_r})\,\Ind_{\{\uuptau_r<t\wedge\uptau_n\}}\Bigr]\\
&\mspace{70mu}
+ \E^{-\delta t} \Exp^x_v \Bigl[\E^{\int_0^{t} [\rc_{v}(X_s)-\lambda+\delta]\, \D{s}}\,
\Psi(X_{t})\,\Ind_{\{t<\uuptau_r\wedge\uptau_n\}}\Bigr]\\
&\mspace{140mu}
+ \Exp^x_v \Bigl[\E^{\int_0^{\uptau_n} [\rc_{v}(X_s)-\lambda]\, \D{s}}\,
\Psi(X_{\uptau_n})\,\Ind_{\{\uptau_n<t\wedge\uuptau_r\}}\Bigr]\,.
\end{aligned}
\end{equation}
We study separately the three integrals on the right-hand side of \cref{PL3.1B},
which we denote as $\mathscr{J}_i$, $i=1,2,3$.
For the first integral we have
\begin{equation*}
\lim_{n\to\infty}\,\lim_{t\to\infty}\, \mathscr{J}_1
\,=\, \Exp^x_v \Bigl[\E^{\int_0^{\uuptau_r}[\rc_{v}(X_s)-\lambda]\, \D{s}}\,
\Psi(X_{\uuptau_r})\,\Ind_{\{\uuptau_r<\infty\}}\Bigr]
\end{equation*}
by monotone convergence. Note that the limit is also finite by \cref{PL3.1A}.

Let
$\widetilde\Prob^x_{\psi,v}$ and $\widetilde\Exp^x_{\psi,v}$ denote the
probability measure and expectation operator on the canonical space of the
twisted process in \cref{E-sde2} with initial condition $\Tilde{X}_0=x$.
Next, using again the technique in \cite[Theorem~2.2]{ABS-19},
we write
\begin{equation*}
\begin{aligned}
\mathscr{J}_2
&\,=\, \E^{-\delta t}\,\Exp^x_v\Bigl[\E^{\int_0^{t\wedge\uuptau_r\wedge\uptau_n}
[\rc_v(X_s)-\lambda + \delta]\,\D{s}}
\,\Psi(X_{t\wedge\uuptau_r\wedge\uptau_n})\,\Ind_{\{t<\uuptau_r\wedge\uptau_n\}}\Bigr]\\
&
\,\le\, \E^{-\delta t} \, \Exp^x_v \Bigl[\E^{\int_0^{t\wedge\uuptau_r\wedge\uptau_n}
[\rc_{v}(X_s)-\lambda+ \delta]\, \D{s}}\,
\Psi(X_{t\wedge\uuptau_r\wedge\uptau_n})\Bigr]\\
&
\,\le\, \E^{-\delta t}\,
\widetilde\Exp^x_{\psi,v}\Bigl[\E^{\delta(t\wedge\uuptau_r\wedge\uptau_n)}\Bigr]
\,\le\, \E^{-\delta t}\,
\widetilde\Exp^x_{\psi,v}\bigl[\E^{\delta\uuptau_r}\bigr]\,,
\end{aligned}
\end{equation*}
where in the second inequality we apply \cite[Lemma~2.3]{ABS-19}.
Thus, $\mathscr{J}_2$ vanishes as $t\to\infty$.

Concerning $\mathscr{J}_3$, using monotone convergence, we obtain
\begin{equation}\label{PL3.1C}
\lim_{t\to\infty}\, \mathscr{J}_3
\,=\, \Exp^x_v \Bigl[\E^{\int_0^{\uptau_n} [\rc_{v}(X_s)-\lambda]\, \D{s}}\,
\Psi(X_{\uptau_n})\,\Ind_{\{\uptau_n<\uuptau_r\}}\Bigr]
\,\le\, \Psi(x)\,\widetilde\Prob^x_{\psi,v}\bigl(\uptau_n<\uuptau_r)\,.
\end{equation}
where the inequality follows from the proof of \cite[Lemma~2.3]{ABS-19}.
In turn, the right-hand side of \cref{PL3.1C}
vanishes as $n\to\infty$, since the twisted process is geometrically ergodic.
This completes the proof of \cref{EL3.1A}.

Suppose that a positive $\phi\in\Sobl^{2,d}(\Rd)$ and $\Hat\lambda\le\lambda$ solve
\begin{equation*}
\Lg_v \phi(x) + \rc_v(x) \phi(x) \,\le\, \Hat\lambda \phi(x)
\quad\text{a.e.\ } x\in\Rd\,.
\end{equation*}
An application of It\^o's formula and Fatou's lemma then shows that
\begin{equation}\label{PL3.1D}
\phi(x) \,\ge\,
\Exp^x_v \Bigl[\E^{\int_0^{\uuptau_r}[\rc_{v}(X_s)-\Hat\lambda]\, \D{s}}\,
\phi(X_{\uuptau_r})\,\Ind_{\{\uuptau_r<\infty\}}\Bigr]\qquad\forall\, x\in\Bar{B}_r^c\,,
\ \ \forall\,r>0\,.
\end{equation}
\Cref{EL3.1A,PL3.1D} imply that if we scale $\phi$ by multiplying it with
a positive constant until it touches $\Psi$ at one point from above,
the function $\frac{\phi}{\Psi}$ attains its minimum value of $1$ at some
point in $\Bar{B}_r$.
A standard calculation shows that
\begin{equation*}
\widetilde\Lg^\psi_v \bigl(\tfrac{\phi}{\Psi}\bigr)(x) \,\le\, (\Hat\lambda-\lambda)
\bigl(\tfrac{\phi}{\Psi}\bigr)(x)\,.
\end{equation*}
Thus, $\frac{\phi}{\Psi}$ must equal a constant by the strong maximum principle,
which implies that $\Hat\lambda=\lambda$.
This of course means that $\lambda=\lambda_v$.
Uniqueness of $\Psiv$ is evident from the preceding argument.
This completes the proof of part (a).

Part (b) is evident from the preceding paragraph.
This completes the proof.
\end{proof}

\subsection{The Bellman equation in \texorpdfstring{$\Rd$}{}}

Recall the solution $(V_r,\rho_r)$ of \eqref{E-HJBN}, the definition of $\rhos$ in
\cref{EA3.1A}, and the definition of $\cG$ in \cref{EcG}.
We define
\begin{equation}\label{E-princ2}
\lambda_*\,\df\,\inf\,\Bigl\{\lambda\in\RR\,
\colon \exists\, \phi\in\Sobl^{2,d}(\Rd),\ \phi>0, \
\cG\phi -\lambda\phi\le 0 \text{\ a.e.\ in\ } \Rd\Bigr\}\,.
\end{equation}
Recall the definitions of $\sA$ and $\sR$ in \cref{EsA,EsR}.
Note that if $(\Phi,\lambda)$ is an eigenpair of $\cG$,
then similarly to \cref{E-eigen3}, we have
\begin{equation}\label{E-eigen4}
\adjustlimits\max_{\xi\in\Act}
\max_{y\in\Rd}\,\bigl[\sA\varphi(x,\xi,y) + \sR(x,\xi,y)\bigr]
\,=\,\lambda\,,
\end{equation}
with $\varphi=\log\Phi$.

\begin{theorem}\label{T3.1}
There exists $\Phis\in\Cc^2(\Rd)$ satisfying
\begin{equation}\label{ET3.1A}
\max_{\xi\in\Act}\,\bigl[\Lg_\xi \Phis(x) + \rc(x,\xi) \Phis(x) \bigr] \,=\,
\rhos \Phis(x) \quad\forall\,x\in\Rd\,,
\end{equation}
and the following hold:
\begin{enumerate}
\item[\ttup a]
The function $\Phis^{-1}$ is inf-compact.

\item[\ttup b]
If $v_*$ is an a.e.\ measurable selector from the maximizer of \cref{ET3.1A},
then, the diffusion with extended generator $\widetilde\Lg_{v_*}^{\varphis}$,
as defined in \cref{E-twist}, is exponentially ergodic and satisfies
\begin{equation}\label{ET3.1B}
\widetilde\Lg_{v_*}^{\varphis}\Phis^{-1}(x) \,=\,
\bigl(\rc_{v_*}(x)-\rhos\bigr)\, \Phis^{-1}(x)\,,
\end{equation}
with $\varphis\df\log\Phis$.

\item[\ttup c] $\rhos = \lambda_*$.

\item[\ttup d]
$\rho_n\to\rhos$ and $V_n\to\Phis$ as $n\to\infty$
uniformly on compact sets, and
the solution $\Phis$ to \cref{ET3.1A} is unique up to a scalar multiple, and
satisfies
\begin{equation}\label{ET3.1C}
\begin{aligned}
\Phis(x) \,\ge\, \Exp^x_v \Bigl[\E^{\int_0^{\uuptau_r}[\rc_{v}(X_s)-\rhos]\, \D{s}}\,
\Phis(X_{\uuptau_r})\,\Ind_{\{\uuptau_r<\infty\}}\Bigr]\qquad\forall\,x\in\Bar{B}_r^c\,,
\end{aligned}
\end{equation}
for all $r>0$,
and for all $v\in\Usm$, with equality if and only if $v$ is an a.e.\ measurable selector
from the maximizer in \cref{ET3.1A}.
\end{enumerate}
\end{theorem}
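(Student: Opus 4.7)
The plan is to obtain $(\Phis,\rhos)$ as an accumulation point of the Neumann eigenpairs $(V_n,\rho_n)$ from \cref{E-HJBN} and to read off (a)--(d) from properties of this limit. First I would show that $\{V_n\}$ is uniformly bounded on $\Rd$. Evaluating \cref{E-HJBN} at an interior maximum $x_n$ of $V_n$ yields $\rho_n\le\max_{\xi}\rc(x_n,\xi)$, while the Hopf lemma combined with the Neumann condition rules out a nonconstant maximum on $\partial B_n$. The strict inequality in \cref{EA3.1A} therefore forces $x_n\in B_R$ for a fixed $R$ and all $n$ large; combined with the normalization $V_n(0)=1$ and the Harnack inequality this yields a uniform $L^\infty$ bound on $V_n$. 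Standard $\Sobl^{2,p}$ estimates provide $\Cc^{1,\alpha}_{\text{loc}}$ compactness, so a subsequence converges to some $\Phis\in\Cc^2(\Rd)$, positive by the strong maximum principle, with $\rho_n\to\rhos$ along the subsequence; passing to the limit in \cref{E-HJBN} gives \cref{ET3.1A}.

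To establish (a), fix a measurable selector $v_*$ from the maximizer in \cref{ET3.1A}. Applying It\^o's formula to $\E^{\int_0^t(\rc_{v_*}-\rhos)\,\D{s}}\Phis(X_t)$ and stopping at $\uuptau_r\wedge\uptau_n$ produces a true martingale, since the drift term equals $\cG\Phis-\rhos\Phis=0$ under $v_*$. The affine growth in \cref{A3.1}\,(i) precludes explosion, so $\uptau_n\uparrow\infty$; meanwhile, on $\{\uptau_n<\uuptau_r\}$ the path lies in $B_r^c$ where $\rc_{v_*}-\rhos\le-\delta$ by \cref{EA3.1A}, so boundedness of $\Phis$ and dominated convergence make that contribution vanish. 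This yields the equality version of \cref{ET3.1C} for $v=v_*$. For $x\in B_R^c$ with $R>r$, the path stays in $B_r^c$ up to $\uuptau_r$, so
\begin{equation*}
\Phis(x)\,\le\,\norm{\Phis}_{\infty}\,\Exp^x_{v_*}\bigl[\E^{-\delta\uuptau_r}\bigr]\,.
\end{equation*}
A Lyapunov estimate based on $\log(1+\abs{x}^2)$ and the affine growth gives $\Prob^x_{v_*}(\uuptau_r\le T)\to0$ as $\abs{x}\to\infty$ for each $T$, whence $\Phis(x)\to0$ and $\Phis^{-1}$ is inf-compact. Equation \cref{ET3.1B} is then a direct calculation from \cref{ET3.1A} using $\varphis=\log\Phis$. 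Combining \cref{ET3.1B} with $\rc_{v_*}-\rhos\le-\delta$ on $B_R^c$ yields $\widetilde\Lg_{v_*}^{\varphis}\Phis^{-1}\le-\delta\Phis^{-1}$ outside $B_R$, and since $\Phis^{-1}$ is inf-compact this Foster--Lyapunov inequality produces exponential ergodicity, proving (b).

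For (c), the bound $\lambda_*\le\rhos$ is immediate from \cref{ET3.1A,E-princ2}. Conversely, any positive $\phi\in\Sobl^{2,d}(\Rd)$ with $\cG\phi\le\lambda\phi$ also satisfies $\Lg_{v_*}\phi+\rc_{v_*}\phi\le\lambda\phi$; the equality version of \cref{ET3.1C} for $v_*$ together with \cref{L3.1}\,(b) identifies $\rhos$ as the principal eigenvalue of $\Lg_{v_*}+\rc_{v_*}$, so $\lambda\ge\rhos$. For (d), \cref{L3.1}\,(a) applied to $\Lg_{v_*}+\rc_{v_*}$ gives uniqueness of $\Phis$ up to a positive scalar; since every subsequential limit of $(V_n,\rho_n)$ has these properties and the normalization $\Phis(0)=1$ pins down the scalar, the full sequence converges. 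Finally, applying It\^o's formula under an arbitrary $v\in\Usm$ and passing to the limit as in the proof of (a) yields the inequality \cref{ET3.1C} (the drift $\Lg_v\Phis+\rc_v\Phis-\rhos\Phis$ is now $\le0$ rather than zero), with equality precisely when $v$ is an a.e.\ measurable selector from the maximizer in \cref{ET3.1A}.

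The principal obstacle is part (a). The passage from local control of $V_n$ on $B_n$ to decay of $\Phis$ at infinity is delicate: one must simultaneously handle non-explosion (via affine growth), the near-monotone condition, and the localization in It\^o's formula. The crucial enabling estimate is $\Exp^x_{v_*}[\E^{-\delta\uuptau_r}]\to0$ as $\abs{x}\to\infty$, which encodes the fact that a diffusion with affine coefficients cannot return quickly to a fixed ball from far away.
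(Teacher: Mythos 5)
Your proposal is correct in substance and follows the paper's overall architecture---pass to the limit in the Neumann problems \cref{E-HJBN}, use the Foster--Lyapunov relation \cref{ET3.1B} to get exponential ergodicity, identify $\rhos$ with $\lambda_*$ by evaluating supersolutions of $\cG$ at the fixed selector $v_*$ and comparing via \cref{L3.1}, and obtain uniqueness and full-sequence convergence by compactness---but your treatment of part (a) takes a genuinely different route. The paper proves the decay of $\Phis$ analytically at the level of the approximating problems: it builds the barrier $\Breve\chi_\epsilon\comp\phi$ with $\phi(x)=(1+\abs{x}^2)^{-\theta}$, compares it with $V_n$ through the strong maximum principle and the Hopf lemma, and thereby gets the quantitative bound $\Phis\le C(1+\abs{x}^2)^{-\theta}$, i.e.\ at least logarithmic growth of $\abs{\varphis}$. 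You instead first extract a uniform $L^\infty$ bound on $V_n$ by an interior-maximum/Harnack argument (sound: at an interior maximum $\rho_n\le\max_\xi\rc(x_n,\xi)$, so along the subsequence with $\rho_n\to\rhos$ the strict inequality \cref{EA3.1A} confines the maximum to a fixed ball, while a boundary maximum is excluded by Hopf against the co-normal condition), and then deduce decay probabilistically from the stochastic representation under $v_*$ together with $\Exp^x_{v_*}\bigl[\E^{-\delta\uuptau_r}\bigr]\to0$ as $\abs{x}\to\infty$, which does follow from an affine-growth Lyapunov estimate. This is arguably more elementary, but it yields only the qualitative statement $\Phis(x)\to0$; the paper's polynomial rate is precisely what is exploited later (the proof of \cref{P4.1} uses that $\frac{\abs{\varphis}}{\Phis^{-1}}$ vanishes at infinity, and \cref{L4.2,T4.1} build on the growth of $\abs{\varphis}$), so your route suffices for \cref{T3.1} as stated but would not substitute for the paper's estimate downstream.

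One wrinkle: the equality assertion in \cref{ET3.1C} is claimed for every $r>0$, whereas your domination argument (bounding the exponential factor by $\E^{-\delta t}$ or $\E^{-\delta\uptau_n}$ on the events where the path has not yet reached $\Bar{B}_r$) works only for $r$ large enough that $\rc-\rhos\le-\delta$ on $B_r^c\times\Act$; for small $r$ the potential $\rc-\rhos$ may be positive on an annulus and the two remainder terms in the It\^o decomposition are no longer controlled this way. The fix is simply to invoke \cref{L3.1}\,(a), whose hypotheses (an eigenpair under $v_*$ with exponentially ergodic twisted process) you have already verified---this is exactly what the paper does. Similarly, the ``only if'' half of (d) (equality forces $v$ to be a maximizing selector) is asserted but not argued; it follows, for instance, because equality for all $r>0$ makes $\Phis$ satisfy $\Lg_v\Phis+(\rc_v-\rhos)\Phis=0$ a.e., so $v$ attains the maximum in \cref{ET3.1A} a.e. These are small repairs; the proposal is otherwise sound.
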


\begin{proof}
Using Theorem~\ref{T2.1} and \eqref{E-JQ}-\eqref{E-J*}, it follows
that $\rho_n \le \sup_{\Rd\times\Act} \rc$, and this combined with \cref{A3.1}\,(iii)
shows that $\{\rho_n\}$ converges along some subsequence
$\{n_k\}_{k\in\NN}\subset\NN$ to $\rhos$.
Therefore, the convergence of $V_{n_k}$ along some further subsequence
$\{ n_k'\}\subset\{n_k\}$ to a $\Phis$ satisfying \cref{ET3.1A}
follows as in the proof of \cite[Lemma~2.1]{Biswas-11a}.

We now turn to part (a). Here in fact we show that $-\abs{\varphis}$ has at least
logarithmic growth in $\abs{x}$.
Let $\delta\in(0,1)$ be a constant such that
$\rhos-\rc(x,\xi) >4\delta$ for all $x$ outside some compact set in $\Rd$.
Consider a function of the form
$\phi(x) = \bigl(1 + \abs{x}^2\bigr)^{-\theta}$, with $\theta>0$.
By \cref{E-growth}, there exists $\theta>0$ and $r_\circ>0$ such that
\begin{equation}\label{ER3.2A}
\max\,\bigl(\Lg_\xi \phi(x), \babs{\upsigma\transp(x)\nabla\phi(x)}\bigr)
\,\le\, \delta\phi(x)\qquad\forall\,(x,\xi)\in B_{r_\circ}^c\times\Act\,.
\end{equation}
We fix such a constant $\theta$.
We restrict our attention to solutions $(V_n,\rho_n)$ of \cref{E-HJBN} over
an increasing sequence in $\NN$, also denoted as $\{n\}$,
 such that $\rho^{}_n$ converges to $\rhos$.
It is clear then that we may enlarge the radius $r_\circ$, if needed, so that
\begin{equation}\label{ER3.2B}
\rho^{}_n-\rc(x,\xi)\,>\, 3\delta\,\qquad \forall\,(x,\xi)\in B_{r_\circ}^c\times\Act\,,
\ \text{and\ } n\ge r_\circ\,.
\end{equation}

Next, let $\Breve\chi\colon\RR\to(0,\infty)$ be a convex function in $\Cc^2(\RR)$
such that $\Breve\chi(t)=t$ for $t\ge2$, and $\Breve\chi(t)$ is constant
and positive for $t\le 1$.
This can be chosen so that $\Breve\chi''<2$ and $\sup_{t>0}\, t\Breve\chi''(t)<2$.
Such a function can be constructed by requiring, for example, that
$\Breve\chi''(t) = 6 (2-t)(t-1)$ for $t\in[1,2]$,
from which we obtain
$\Breve\chi(t) = -\frac{1}{2} t^4 + 3 t^3 -6t^2 + 5t$ for $t\in[1,2]$.
A simple calculation shows that $\Breve\chi(1) = \frac{3}{2}$.
Note that $\Breve\chi(t)-t\Breve\chi'(t)\ge0$ for all $t>0$ by convexity.
Let $\Breve\chi_\epsilon(t) \df \epsilon\Breve\chi\bigl(\nicefrac{t}{\epsilon}\bigr)$
for $\epsilon>0$.
Then
\begin{equation}\label{ER3.2C}
\Breve\chi_\epsilon(t)-t\Breve\chi'_\epsilon(t)\,\ge\,0\,,\quad\text{and\ \ }
t \Breve\chi''_\epsilon(t)\,<\,2 \qquad\forall\,t>0\,.
\end{equation}
Using \cref{ER3.2A,ER3.2B,ER3.2C},
we obtain 
\begin{equation}\label{ER3.2D}
\begin{aligned}
\Lg_\xi \Breve\chi_\epsilon\bigl(\phi(x)\bigr)
&+ \bigl(\rc(x,\xi)-\rho^{}_n\bigr) \Breve\chi_\epsilon\bigl(\phi(x)\bigr)\\
&\,\le\, -3\delta \Breve\chi_\epsilon\bigl(\phi(x)\bigr) 
+ \Breve\chi'_\epsilon\bigl(\phi(x)\bigr)\,\Lg_\xi \phi(x)
+\frac{1}{2}\Breve\chi''_\epsilon\bigl(\phi(x)\bigr) 
\abs{\upsigma\transp(x)\nabla\phi(x)}^2\\
&\le -3\delta \Breve\chi_\epsilon\bigl(\phi(x)\bigr)
+\delta \phi(x)\,\Breve\chi'_\epsilon\bigl(\phi(x)\bigr)
+ \frac{1}{2} \delta^2 \bigl(\phi(x)\bigr)^2\Breve\chi''_\epsilon\bigl(\phi(x)\bigr)\\
&\le -\delta \Breve\chi_\epsilon\bigl(\phi(x)\bigr)\,.
\end{aligned}
\end{equation}
For the last inequality in \cref{ER3.2D}, we use
the properties $\Breve\chi_\epsilon(\phi)
\ge \phi\,\Breve\chi_\epsilon'(\phi)$
and $\phi\,\Breve\chi_\epsilon''(\phi)<2$
from \cref{ER3.2C},
that the fact that $\Breve\chi_\epsilon(\phi)\ge\phi$ and $\delta<1$.
Note that, due to radial symmetry, the support of $\Breve\chi'_\epsilon\comp\phi$
is a ball of the form $B_{R_\epsilon}$, with $\epsilon\mapsto R_\epsilon$ an
nonincreasing continuous function with $R_\epsilon\to\infty$ as $\epsilon\searrow0$.
Recall the functions $V_n$ in \cref{E-HJBN}.
Select $\epsilon$ such that $R_\epsilon=n>r_\circ$.
Scale $V_n$ until it touches $\Breve\chi_\epsilon\comp\phi$ at some point $\Hat{x}$
from below.
Here, $\Breve\chi_\epsilon\comp\phi$ denotes the composition of
$\Breve\chi_\epsilon$ and $\phi$.
Let $v^{}_n$ be a measurable selector from the minimizer in \cref{E-HJBN},
and define $h_n \df \Breve\chi_\epsilon\comp\phi - V_n$.
Then, by \cref{E-HJBN,ER3.2D}, we have
\begin{equation*}
\Lg_{v^{}_n} h_n(x)
+ \bigl(\rc_{v^{}_n}(x)-\rho^{}_n\bigr) h_n(x) \,<\,0
\qquad\forall\,x\in\Rd\,,
\end{equation*}
and $\langle\nabla h_n,\gamma\rangle =0$ on $\partial B_n$,
since the gradient of $\Breve\chi_\epsilon\comp\phi$ vanishes on
$\partial B_{R_\epsilon}$.
It follows by the strong maximum principle that $\Hat{x}$ cannot lie in the
$B_{n}\setminus B_{r_\circ}$.
Thus $h_n>0$ on this set.
This implies that $\Hat{x}$ cannot lie on $\partial B_n$ either,
without contradicting the Hopf boundary point lemma.
Thus $\Hat{x}\in B_{r_\circ}$.
This however shows by taking limits as $\epsilon\searrow0$, and employing
the Harnack inequality which asserts that $V_n(x)\le C_{\mathsf{H}} V_n(y)$
for all $x,y\in B_{r_\circ}$ for some constant $C_{\mathsf{H}}$,
that $\Phis\le C\phi$ for some constant $C$.
This proves part (a).

\Cref{ET3.1B} follows by \cref{E-eigenT}.
Since $\Phis^{-1}$ is inf-compact and the right hand side of
\cref{ET3.1B} is negative and bounded away from zero outside a compact set
by \cref{A3.1}\,(iii),
the associated diffusion is ergodic \cite[Theorem~4.1]{Ichihara-13b}.
In turn, the Foster--Lyapunov equation in \cref{ET3.1B} shows
that the diffusion is exponentially ergodic \cite{MeTw}.
This proves part (b).

Moving to the proof of part (c), suppose
that for some $\rho\le\rhos$ we have
\begin{equation}\label{PT3.1E}
\max_{\xi\in\Act}\,\bigl[\Lg_\xi \phi(x) + \rc(x,\xi) \phi(x) \bigr]
\,\le\, \rho\,\phi(x)\,.
\end{equation}
Evaluating this equation at measurable selector $v_*$ from the maximizer
of \cref{ET3.1A},
and following the argument in the proof
of \cref{L3.1} we obtain $\rho=\rhos$ and $\phi=\Phis$.
This also shows that $\rhos\ge\lambda_*$ by the definition in \cref{E-princ2},
and thus we have equality by \cref{ET3.1A}.

In order to prove part (d), suppose that $\rho_n\to\rho\le\rhos$ along
some subsequence.
Taking limits along perhaps a further subsequence, we obtain a positive
function $\phi\in\Cc^2(\Rd)$ that satisfies \cref{PT3.1E} with equality.
Thus $\rho=\rhos$ and and $\phi=\Phis$ by part (c).
The stochastic representation in \cref{ET3.1C} follows as in the proof of \cref{L3.1}.
This completes the proof.
\end{proof}

\subsection{Dirichlet eigenvalues and the risk-sensitive value}\label{S3.2}

In this section we first show that the problem in $\Rd$ can also be approached
by using Dirichlet eigensolutions.
The main result is \cref{T3.2}, which establishes that
$\rhos$ equals the risk-sensitive value $J_*$,
and the usual verification of optimality criterion.

We borrow some results from \cite{BNV-94,Berestycki-15}.
These can also be found in \cite[Lemma~2.2]{AB-18}, and are summarized
as follows:
Fix any $v\in\Usm$.
For each $r\in(0,\infty)$ there exists a unique pair
$(\Psi_{\mspace{-2mu}v,r},\lambda_{v,r})
\in\bigl(\Sob^{2,p}(B_r)\cap\Cc(\Bar{B}_r)\bigr)\times\RR$,
for any $p>d$, satisfying
$\Psi_{\mspace{-2mu}v,r}>0$ on $B_r$, $\Psi_{\mspace{-2mu}v,r}=0$ on
$\partial B_r$, and $\Psi_{\mspace{-2mu}v,r}(0)=1$,
which solves
\begin{equation}\label{E-Leigen}
\Lg_v \Psi_{\mspace{-2mu}v,r}(x) + \rc_v(x)\,\Psi_{\mspace{-2mu}v,r}(x)
\,=\, \lambda_{v,r}\,\Psi_{\mspace{-2mu}v,r}(x)
\qquad\text{a.e.\ }x\in B_r\,.
\end{equation}
Moreover, the solution has the following properties:
\begin{enumerate}
\item[(i)]
The map $r\mapsto\lambda_{v,r}$ is continuous and strictly increasing.

\item[(ii)]
In its dependence on the function $\rc_v$,
 $\lambda_{v,r}$ is nondecreasing, convex, and Lipschitz continuous
 (with respect to the $\Lp^{\infty}$ norm) with Lipschitz constant $1$.
In addition, if $\rc_v\lneqq \rc_v'$ then $\lambda_{v,r}(\rc_v)<\lambda_{v,r}(\rc_v')$.
\end{enumerate}
We refer to $\lambda_{v,r}$ and $\Psi_{\mspace{-2mu}v,r}$ as the
(Dirichlet) eigenvalue
and eigenfunction, respectively, of the operator $\Lg_v + \rc_v$ on
$B_r$.

Recall the definition of $\cG$ in \cref{EcG}.
Based on the results in \cite{Quaas-08}, there exists a unique pair
$(\Psi_{\mspace{-2mu}*,r},\lambda_{*,r})\in
\bigl(\Cc^2(B_r)\cap\Cc(\Bar{B}_r)\bigr)\times\RR$, satisfying
$\Psi_{\mspace{-2mu}*,r}>0$ on $B_r$, $\Psi_{\mspace{-2mu}*,r}=0$ on
$\partial B_r$, and $\Psi_{\mspace{-2mu}*,r}(0)=1$, which solves
\begin{equation}\label{E-Geigen}
\cG \Psi_{\mspace{-2mu}*,r}(x)
\,=\, \lambda_{*,r}\,\Psi_{\mspace{-2mu}*,r}(x)
\qquad\forall\,x\in B_r\,,
\end{equation}
and properties (i)--(ii) above hold for $\lambda_{*,r}$.
Also recall the definitions of the generalized principal eigenvalues
in \cref{E-princ1,E-princ2}, and $\rho_r$ defined in \cref{E-HJBN}.

\begin{lemma}\label{L3.2}
The following hold:
\begin{enumerate}
\item[\ttup i]
For $r>0$, we have $\lambda_{v,r}\le \lambda_{*,r}$ for all $v\in\Usm$, and
$\lambda_{*,r}< \rho_r$.

\item[\ttup {ii}]
$\lim_{r\to\infty}\, \lambda_{v,r}=\lambda_v$ for all $v\in\Usm$, and
$\lim_{r\to\infty}\, \lambda_{*,r}=\lambda_*$.
\end{enumerate}
\end{lemma}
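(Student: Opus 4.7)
My plan is to handle the two parts in turn. For part (i), the first inequality $\lambda_{v,r}\le\lambda_{*,r}$ follows directly from the pointwise bound $\cG\,\cdot\,\ge\Lg_v\,\cdot\,+\rc_v\,\cdot\,$: the function $\Psi_{*,r}$ is positive on $B_r$ and satisfies $(\Lg_v+\rc_v)\Psi_{*,r}\le\cG\Psi_{*,r}=\lambda_{*,r}\Psi_{*,r}$, so the Berestycki--Nirenberg--Varadhan characterization of the Dirichlet principal eigenvalue as an infimum over positive supersolutions immediately yields $\lambda_{v,r}\le\lambda_{*,r}$.

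For the strict inequality $\lambda_{*,r}<\rho_r$, I would argue by contradiction. Assume $\lambda_{*,r}=\rho_r$ and pick a measurable selector $v_*$ from the maximizer of $\cG\Psi_{*,r}$, so that
\begin{equation*}
\Lg_{v_*}\Psi_{*,r}+\rc_{v_*}\Psi_{*,r}=\lambda_{*,r}\Psi_{*,r}
\quad\text{and}\quad
\Lg_{v_*}V_r+\rc_{v_*}V_r\le\cG V_r=\lambda_{*,r}V_r\,.
\end{equation*}
Because $V_r>0$ on $\Bar B_r$ (by Hopf applied at the Neumann boundary) whereas $\Psi_{*,r}=0$ on $\partial B_r$, the ratio $u\df V_r/\Psi_{*,r}$ is positive on $B_r$ and blows up near $\partial B_r$, hence attains a positive interior minimum. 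A short product-rule computation with the twist $\psi\df\log\Psi_{*,r}$, in the sense of \cref{E-twist}, reduces the two displayed relations to $\widetilde\Lg_{v_*}^{\psi}u\le 0$ on $B_r$; the strong minimum principle for an operator with no zeroth-order term then forces $u$ to be constant, contradicting $u\to\infty$ on $\partial B_r$.

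For (ii) monotonicity of $r\mapsto\lambda_{v,r}$ and $r\mapsto\lambda_{*,r}$ (property (i) recalled above) gives existence of the limits $\bar\lambda_v$ and $\bar\lambda_*$. To prove $\bar\lambda_v\le\lambda_v$, I would fix any $\lambda>\lambda_v$ together with a positive $\phi\in\Sobl^{2,d}(\Rd)$ satisfying $\Lg_v\phi+\rc_v\phi\le\lambda\phi$, and examine $w\df\Psi_{v,r}/\phi$; since $w>0$ on $B_r$ and $w=0$ on $\partial B_r$, an interior maximum $\hat x$ exists, and at $\hat x$ one has $\nabla w(\hat x)=0$ and $\Lg_v w(\hat x)\le 0$. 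Substituting $\Psi_{v,r}=w\phi$ in the eigenvalue equation at $\hat x$ via the product rule collapses it to $\lambda_{v,r}\le\lambda$, whence $\bar\lambda_v\le\lambda_v$. The reverse inequality is a routine compactness step: normalize $\Psi_{v,r}(0)=1$, apply the Harnack inequality together with interior $\Sob^{2,p}$ estimates to extract a positive subsequential limit $\Psi$ on $\Rd$ solving $\Lg_v\Psi+\rc_v\Psi=\bar\lambda_v\Psi$, and conclude $\lambda_v\le\bar\lambda_v$ from \cref{E-princ1}. The same two steps, now with $\cG$ in place of $\Lg_v+\rc_v$ (and a measurable selector $v_*$ of the maximizer in $\cG$ used on the subsolution side to linearize the comparison), give $\lim_{r\to\infty}\lambda_{*,r}=\lambda_*$.

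I expect the strict inequality $\lambda_{*,r}<\rho_r$ to be the main obstacle: it is the only step where one must simultaneously cope with the semilinearity of $\cG$ (necessitating the measurable selector $v_*$), exploit the mismatched Dirichlet/Neumann boundary behaviour of $\Psi_{*,r}$ and $V_r$, and apply the strong maximum principle on the correctly twisted linear operator $\widetilde\Lg_{v_*}^{\psi}$ rather than on $\cG$ itself.
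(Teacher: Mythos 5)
Your proposal is correct in substance but follows a partly different route from the paper's. For part (i), the paper proves both inequalities by one and the same touching argument: it scales one eigenfunction until it touches the other from below and applies the strong maximum principle to the difference, obtaining the strict inequality $\lambda_{v,r'}<\lambda_{*,r}$ for $r'<r$ and then $\lambda_{v,r}\le\lambda_{*,r}$ by continuity of $r\mapsto\lambda_{v,r}$, and it gets $\lambda_{*,r}<\rho_r$ by the same comparison with $r'=r$ (since $V_r>0$ up to $\partial B_r$, the touching point is interior and strictness follows). You instead obtain the first inequality directly from the Berestycki--Nirenberg--Varadhan characterization of the Dirichlet principal eigenvalue as an infimum over positive supersolutions---legitimate, since the paper imports precisely this circle of results from \cite{BNV-94,Berestycki-15} (see also \cite[Lemma~2.2]{AB-18})---and the strict inequality via the ground-state transform of the ratio $V_r/\Psi_{*,r}$, exploiting its blow-up at $\partial B_r$; this is a valid alternative that isolates clearly why the Dirichlet/Neumann mismatch forces strictness, at the cost of invoking the Hopf lemma to guarantee $V_r>0$ on $\partial B_r$. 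For part (ii) the paper simply cites \cite[Lemma~2.2\,(ii)]{ABS-19}; your monotone-limit argument (supersolution comparison in one direction; Harnack, interior $\Sob^{2,p}$ estimates and a diagonal extraction in the other, with a measurable selector used to linearize $\cG$) is essentially a self-contained reproduction of that proof.

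Two small repairs are needed. First, in the strict inequality you assume $\lambda_{*,r}=\rho_r$, which only rules out equality, whereas the lemma also requires excluding $\lambda_{*,r}>\rho_r$. The fix is immediate: run the identical argument under the hypothesis $\lambda_{*,r}\ge\rho_r$, since then $\Lg_{v_*}V_r+\rc_{v_*}V_r\le\cG V_r=\rho_r V_r\le\lambda_{*,r}V_r$, which is all your computation uses. Second, in part (ii) the pointwise evaluation ``$\nabla w(\hat x)=0$ and $\Lg_v w(\hat x)\le0$'' is delicate: $\phi\in\Sobl^{2,d}(\Rd)$ and $\Psi_{\mspace{-2mu}v,r}\in\Sob^{2,p}(B_r)$ are only strong solutions, so second derivatives and the a.e.\ inequality for $\phi$ need not hold at the particular point $\hat x$. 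The clean version is to note that $w\df\Psi_{\mspace{-2mu}v,r}/\phi$ satisfies $\widetilde\Lg^{\log\phi}_v w-(\lambda_{v,r}-\lambda)w\ge0$ a.e.\ in $B_r$, and then, if $\lambda_{v,r}>\lambda$, the maximum principle for strong solutions (the same tool you already use for the ratio in part (i)) forces $w\le0$ in $B_r$, contradicting $w>0$; hence $\lambda_{v,r}\le\lambda$ and the rest of your limit argument goes through unchanged.
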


\begin{proof}
Part (i) is a straightforward application of the strong maximum principle.
By \cref{E-gen,E-Geigen} we have
\begin{equation}\label{PL3.2A}
\Lg_v \Psi_{\mspace{-2mu}*,r}(x) + \rc_v(x)\,\Psi_{\mspace{-2mu}*,r}(x)
\,\le\, \lambda_{*,r}\,\Psi_{\mspace{-2mu}*,r}(x)
\qquad\text{a.e.\ }x\in B_r\,.
\end{equation}
Let $r'<r$, and suppose that $\lambda_{v,r'}\,\ge\, \lambda_{*,r}$.
Scale $\Psi_{\mspace{-2mu}v,r'}$ so that it touches $\Psi_{\mspace{-2mu}*,r}$
at one point from below in $B_{r'}$.
Then $\Psi_{\mspace{-2mu}*,r}-\Psi_{\mspace{-2mu}v,r'}$ is nonnegative, and
by \cref{E-Leigen,PL3.2A} it satisfies
\begin{equation*}
\begin{aligned}
\Lg_v(\Psi_{\mspace{-2mu}*,r}&-\Psi_{\mspace{-2mu}v,r'})
-\bigl(\rc_v-\lambda_{*,r}\bigr)^{-} (\Psi_{\mspace{-2mu}*,r}-\Psi_{\mspace{-2mu}v,r'})\\
&\,=\, -\bigl(\rc_v-\lambda_{*,r}\bigr)^{+}
(\Psi_{\mspace{-2mu}*,r}-\Psi_{\mspace{-2mu}v,r'})
 - \bigl(\lambda_{v,r}-\lambda_{*,r}\bigr)\Psi_{\mspace{-2mu}v,r'}
\,\le\, 0
\quad\text{a.e.\ on\ } B_{r'}\,.
\end{aligned}
\end{equation*}
This however implies that $\Psi_{\mspace{-2mu}*,r}=\Psi_{\mspace{-2mu}v,r'}$ on $B_{r'}$
which is a contradiction.
Hence $\lambda_{v,r'}\,<\, \lambda_{*,r}$ for all $r'<r$ and
the inequality $\lambda_{v,r}\le \lambda_{*,r}$ follows by the continuity
of $r\mapsto\lambda_{v,r}$.
Following the same method, with $r'=r$, we obtain $\lambda_{*,r}< \rho_r$.

Part (ii) follows by \cite[Lemma~2.2\,(ii)]{ABS-19}.
\end{proof}

Recall the definitions in \cref{E-JQ,E-J*}, and let
\begin{equation*}
J^x_\xi\,=\, J^x_\xi(\rc) \,\df\, J^x_\xi(\rc;\Rd)\,,
\end{equation*}
and similarly for $J^x_*$ and $J_*$.
Also, recall that
\begin{equation*}
J^x_v \,=\, J^x_v(\rc) \,=\, \liminf_{T\to\infty}\, \frac{1}{T}\,
\log \Exp^x_v \Bigl[\E^{\int^T_0 \rc_v(X_t)\,\D t} \Bigr]\,,
\quad x\in \Rd\,,\ v\in\Usm\,.
\end{equation*}
The theorem that follows concerns the equality $\lambda_*=J_*$.
Recall the definition in \cref{EA3.1A}.

\begin{theorem}\label{T3.2}
We have
$\lambda_*=\rhos =J_*$.
In addition, $J^x_v=J_*$ if and only if
$v$ is an a.e.\ measurable selector from the maximizer of \cref{ET3.1A}.
\end{theorem}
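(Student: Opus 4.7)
The equality $\lambda_*=\rhos$ is already Theorem~\ref{T3.1}(c), so the remaining tasks are the identity $\rhos=J_*$ and the characterization of optimal stationary Markov controls. I will use throughout that $\Phis$ is bounded above (since $\Phis^{-1}$ is inf-compact by Theorem~\ref{T3.1}(a)), bounded below on compact sets, and that $\rc$ is bounded above by some $M$.

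\emph{Upper bound $J^x_\xi\le\rhos$.} The key step is to produce, for every $\eta>0$, a positive $\tilde\phi\in\Cc^2(\Rd)$ bounded below by a positive constant with $\cG\tilde\phi\le(\rhos+\eta)\tilde\phi$. My candidate is $\tilde\phi\df\Phis+\epsilon$, with $\epsilon>0$ small. By subadditivity of the max,
\begin{equation*}
\cG(\Phis+\epsilon)\,\le\,\cG\Phis+\epsilon\,\bar\rc\,=\,\rhos\Phis+\epsilon\,\bar\rc\,,\qquad \bar\rc(x)\,\df\,\max_{\xi\in\Act}\rc(x,\xi)\,,
\end{equation*}
so the desired inequality reduces to $\epsilon\,(\bar\rc-\rhos)^+\le\eta\,(\Phis+\epsilon)$. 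On $B_R^c$ for $R$ large this holds automatically by Assumption~\ref{A3.1}(iii); on $B_R$ the quantity $(\bar\rc-\rhos)^+$ is bounded by $M-\rhos$ while $\Phis\ge c_R>0$, so choosing $\epsilon$ small relative to $\eta c_R$ closes the step. It\^o's formula applied to $\tilde\phi(X_t)\,\E^{\int_0^t(\rc-\rhos-\eta)\,\D s}$ then produces a local supermartingale under every admissible $\xi$; localization at $\uptau_n$ combined with the fact that both factors are uniformly bounded in $n$ for fixed $T$ gives, by dominated convergence,
\begin{equation*}
\epsilon\,\Exp^x_\xi\bigl[\E^{\int_0^T(\rc-\rhos-\eta)\,\D s}\bigr]\,\le\,\tilde\phi(x)\,\le\,M_\Phi+\epsilon\,.
\end{equation*}
Taking $\tfrac{1}{T}\log$, $T\to\infty$, and $\eta\searrow0$ yields $J^x_\xi\le\rhos$.

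\emph{Lower bound $J_*\ge\rhos$.} Lemma~\ref{L3.2}(ii) gives $\lambda_{*,r}\nearrow\rhos$. Fix $r$ and let $v_{*,r}$ be a measurable selector from the maximizer in \cref{E-Geigen}. The process $\Psi_{\mspace{-2mu}*,r}(X_t)\,\E^{\int_0^t(\rc_{v_{*,r}}-\lambda_{*,r})\,\D s}$ is a true martingale on $[0,T\wedge\uptau_r]$ under $v_{*,r}$ by boundedness; optional stopping, combined with the Dirichlet condition $\Psi_{\mspace{-2mu}*,r}=0$ on $\partial B_r$, gives
\begin{equation*}
1\,=\,\Psi_{\mspace{-2mu}*,r}(0)\,=\,\Exp^0_{v_{*,r}}\bigl[\Psi_{\mspace{-2mu}*,r}(X_T)\,\E^{\int_0^T(\rc_{v_{*,r}}-\lambda_{*,r})\,\D s}\,\Ind_{\{T<\uptau_r\}}\bigr]\,.
\end{equation*}
Bounding $\Psi_{\mspace{-2mu}*,r}\le M_r$ on $\overline B_r$ forces $\Exp^0_{v_{*,r}}[\E^{\int_0^T\rc_{v_{*,r}}\,\D s}]\ge M_r^{-1}\,\E^{\lambda_{*,r}T}$, so $J^0_{v_{*,r}}\ge\lambda_{*,r}$. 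Sending $r\to\infty$ and combining with the upper bound yields $\rhos=J^x_*=J_*$ for every $x$.

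\emph{Characterization of optimal controls.} For the ``if'' direction, when $v$ is an a.e.\ selector from the maximizer of \cref{ET3.1A}, equality $\Lg_v\Phis+\rc_v\Phis=\rhos\Phis$ turns the supermartingale of the upper-bound argument into a local martingale; the same localization/dominated-convergence step upgrades it to the identity $\Phis(x)=\Exp^x_v[\Phis(X_T)\,\E^{\int_0^T(\rc_v-\rhos)\,\D s}]$, from which $\Phis(X_T)\le M_\Phi$ yields $J^x_v\ge\rhos$, hence equality. The ``only if'' direction is the \emph{main obstacle}. If $v$ is not a.e.\ a selector, then $\Lg_v\Phis+\rc_v\Phis\lneqq\rhos\Phis$ on a set of positive Lebesgue measure, so $\Phis$ is a strict positive supersolution of the \emph{linear} operator $\Lg_v+\rc_v$ at level $\rhos$. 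Assuming $\lambda_v=\rhos$ would, by a scaling-and-touching argument in the spirit of Lemma~\ref{L3.1}(b) applied to the linear twisted process (which is exponentially ergodic since $\rhos>\lim_{r\to\infty}\sup_{B_r^c}\rc_v$ by Assumption~\ref{A3.1}(iii)), force $\Phis$ to coincide with the linear ground state $\Psiv$ up to a scalar and hence contradict strictness. Thus $\lambda_v<\rhos$, and the upper-bound argument reapplied to the linear problem (with $\Psiv+\epsilon$ in place of $\Phis+\epsilon$, using that $\Psiv^{-1}$ is inf-compact because the linear analog of near-monotonicity survives at level $\lambda_v$) gives $J^x_v\le\lambda_v<\rhos$. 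The delicate point is precisely the verification of this linear near-monotone hypothesis at the eigenvalue $\lambda_v$ produced by the contradiction argument; this is what makes the ``only if'' direction substantially harder than the ``if'' direction.
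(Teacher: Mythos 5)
The first equality chain and the ``if'' part of the characterization are essentially correct and run parallel to the paper: your upper bound uses the perturbation $\Phis+\epsilon$ where the paper uses $\Phis+\epsilon\chi$ with a cut-off vanishing on a large ball, and your lower bound goes through the Dirichlet eigenpairs $(\Psi_{*,r},\lambda_{*,r})$ of \cref{E-Geigen} together with \cref{L3.2}\,(ii) instead of the representation of $\Phis$ along the exponentially ergodic ground state process; both variations are harmless.

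The ``only if'' direction, however, has two genuine gaps, and they are exactly where the paper does real work. First, your scaling-and-touching step needs the stochastic representation \cref{EL3.1A} for the linear ground state $\Psiv$, hence exponential ergodicity of the corresponding twisted process. You assert this ergodicity ``since $\rhos>\lim_{r\to\infty}\sup_{B_r^c}\rc_v$ by \cref{A3.1}\,(iii)'', but near-monotonicity of the potential alone does not do it: the identity $\widetilde\Lg^{\psi_v}_v\Psiv^{-1}=(\rc_v-\lambda_v)\Psiv^{-1}$ is a Foster--Lyapunov inequality only if $\Psiv^{-1}$ is inf-compact, i.e.\ only if $\Psiv$ vanishes at infinity, and that decay is not automatic. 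The paper proves it by comparing the Dirichlet eigenfunctions $\Psi_{v,n}$ with the barrier $\phi(x)=(1+\abs{x}^2)^{-\theta}$ from the proof of \cref{T3.1}, via the Harnack inequality and the strong maximum principle; without this step your appeal to a Lemma~\ref{L3.1}-type argument is unjustified. Second, you flag but do not close the case in which near-monotonicity fails at level $\lambda_v$, i.e.\ $\lambda_v\le\nu\df\lim_{r\to\infty}\sup_{B_r^c\times\Act}\rc$. In that case neither your comparison argument nor your linear upper bound $J^x_v\le\lambda_v$ can be run, so optimality of such a $v$ is not excluded by what you wrote, and the ``only if'' statement is left unproved there. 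The paper handles this case with a separate idea: by strict monotonicity of the generalized principal eigenvalue in the potential \cite[Corollary~3.2]{ABG-19}, one chooses $\delta>0$ with $\nu<\lambda_v(\rc_v+\delta\Ind_{B_1})<\rhos$, applies the $\lambda>\nu$ machinery to the perturbed potential, and concludes $J^x_v\le\lambda_v(\rc_v+\delta\Ind_{B_1})<\rhos$, so $v$ cannot be optimal. Supplying these two ingredients is necessary for your argument to go through.
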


\begin{proof}
We already have $\rhos=\lambda_*$ from \cref{T3.1}. This also gives
\begin{equation*}
\rhos \,\le\, J^x_{v_*}(\rc) \,\le\, J_*\,.
\end{equation*}
Choose $R>0$ such that $\rhos>\sup_{B^c_R\times\Act}\,\rc$.
This is possible by \cref{EA3.1A}.
Let $\delta>0$ be given, and select a smooth, non-negative cut-off function $\chi$ that
vanishes in $B_R$ and equals to $1$ in $B_{R+1}^c$.
Let $\Psi=\Phis+\varepsilon \chi$, and select $\epsilon>0$ small enough so that
\begin{equation*}
\epsilon\, \bigl(\cG\chi(x) -\rhos\chi(x)\bigr)\,\le\,
\delta\, \Phis(x)\qquad\forall\,x\in\bar{B}_{R+1}\,.
\end{equation*}
This is clearly possible since $\Phis$ is positive and
\begin{equation*}
\cG\chi(x) -\rhos\chi(x) \,=\,
\max_{\xi\in\Act}\, (\rc(x,\xi)-\rhos) \chi(x) \,\le\, 0
\qquad\forall\,x\in B_{R+1}^c\,.
\end{equation*}
We have
\begin{equation}\label{PT3.2A}
\cG \Psi(x) - (\rhos+\delta)\Psi(x)
\,\le\, (\cG-\rhos)\Phis(x) +\epsilon\,(\cG-\rhos)\chi(x)
- \delta\,\Psi(x)\,\le\,0\quad\forall\,x\in\Rd\,.
\end{equation}
Since $\Psi$ is bounded below away from zero, a standard use of
It\^o's formula and the Fatou lemma applied to \cref{PT3.2A} shows that
$J^x_\xi\le \rhos+\delta$ for all $\xi\in\Uadm$.
Since $\delta$ is arbitrary this implies $\rhos\ge J_*$, and hence
we must have equality.
This also shows that every a.e.\ measurable selector from the maximizer of \cref{ET3.1A}
is optimal.

Next, for $v\in\Usm$, let $(\lambda_v,\Psiv)$ be an eigenpair,
obtained as a limit of
Dirichlet eigenpairs $\bigl\{(\lambda_{v,n},\Psi_{\mspace{-2mu}v,n})\bigr\}_{n\in\NN}$,
with $\Psi_{\mspace{-2mu}v,n}(0)=1$, along some subsequence (see \cref{L3.2}).
Let $\nu\in[-\infty,\infty)$ be defined by
\begin{equation*}
\nu \,\df\, \lim_{r\to\infty}\,\sup_{(x,\xi)\in B_r^c\times\Act}\,\rc(x,\xi)\,.
\end{equation*}
First suppose that $\lambda_v>\nu$.
Then, using the the argument in the preceding paragraph, together with the fact that
$\lambda_v\le J^x_v$, we deduce that $\lambda_v= J^x_v$ for all $x\in\Rd$.
Thus if $v\in\Usm$ is optimal, we must have $\lambda_v=\rhos$.
This implies that we can select a ball $\sB$ such that
\begin{equation*}
\lambda_{v,n}-\sup_{(x,\xi)\in \sB^c\times\Act}\,\rc(x,\xi)\,>\, 0
\end{equation*}
for all sufficiently large $n$.
Let $\uuptau= \uptau(\sB^c)$.
By \cite[Lemma~2.10\,(i)]{AB-18}, we have the stochastic representation
\begin{equation*}
\Psi_{\mspace{-2mu}v,n}(x)\,=\, \Exp^x_v \Bigl[\E^{\int_{0}^{\uuptau}
[\rc_v(X_{t})-\lambda_{v,n}]\,\D{t}}\, \Psi_{\mspace{-2mu}v,n}(X_{\uuptau})\,
\Ind_{\{\uuptau<\uptau_{n}\}}\Bigr]\qquad\forall\,
x\in B_{n}\setminus\Bar{\sB}\,.
\end{equation*}

Next we show that that $\Psiv$ vanishes at infinity by
using the argument in the proof of \cref{T3.1}.
The analysis is simpler here.
Selecting the same function $\phi$ as in the proof of \cref{T3.1},
there exists $R>0$ such that
\begin{equation*}
\Lg_v \phi(x) +\rc_v(x) \phi(x) \,\le\, \lambda_v \phi(x)\qquad\forall\,
x\in B_R^c\,.
\end{equation*}
Since $\Psi_{\mspace{-2mu}v,n}(0)=1$, employing the Harnack inequality we scale
$\phi$ so that $\phi>\Psi_{\mspace{-2mu}v,n}$ on $B_R$ for all $n>R$.
The strong maximum principle then shows that $\Psi_{\mspace{-2mu}v,n}<\phi$ on $\Rd$.

Thus $\Psiv^{-1}$ is inf-compact, which together with
the Lyapunov equation $\widetilde\Lg^{\psi^{}_v}_{v}\Psiv^{-1}
= \bigl(\rc_v-\rhos)\Psiv^{-1}$ imply that
the ground state process is exponentially ergodic.
By \cref{L3.1}, we then have
\begin{equation}\label{PT3.2B}
\Psiv(x)\,=\, \Exp^x_v \Bigl[\E^{\int_{0}^{\uuptau}
[\rc_v(X_{t})-\rhos]\,\D{t}}\, \Psiv(X_{\uuptau})\,
\Ind_{\{\uuptau<\infty\}}\Bigr]\qquad\forall\, x\in \Bar{\sB}^c\,.
\end{equation}
On the other hand, it holds that
$\Lg_v \Phis + \rc_v \Phis \le \rhos \Phis$,
which implies that
\begin{equation}\label{PT3.2D}
\begin{aligned}
\Phis(x) \,\ge\, \Exp^x_v \Bigl[\E^{\int_0^{\uuptau}[\rc_{v}(X_s)-\rhos]\, \D{s}}\,
\Phis(X_{\uuptau})\,\Ind_{\{\uuptau<\infty\}}\Bigr]\,.
\end{aligned}
\end{equation}
Comparing the functions in \cref{PT3.2B,PT3.2D} using the strong maximum principle,
as done in the proof of \cref{L3.1}, we deduce that $\Psiv=\Phis$.
Thus $v$ is a measurable selector from the maximizer of \cref{ET3.1A}.

It remains to address the case $\lambda_v\le\nu$.
By \cite[Corollary~3.2]{ABG-19} there exists a positive constant $\delta$ such that
$\lambda_v(\rc_v+\delta\Ind_{B_1})>\nu$, and $\lambda_v(\rc_v+\delta\Ind_{B_1})<\rhos$.
Thus repeating the above argument we obtain
\begin{equation*}
\rhos\,>\,\lambda_v(\rc_v+\delta\Ind_{B_1})\,=\,
\liminf_{T\to\infty}\, \frac{1}{T}\,
\log \Exp^x_v \Bigl[\E^{\int^T_0 [\rc_v(X_t)+\delta\Ind_{B_1}(X_t)]\,\D t} \Bigr]
\,\ge\, J^v_x\qquad
\forall\,x\in\Rd\,.
\end{equation*}
Therefore, $v$ cannot be optimal.
This completes the proof.
\end{proof}

\section{The variational formula on \texorpdfstring{$\Rd$}{}}\label{S4}

In this section we establish the variational formula on $\Rd$.
As mentioned in \cref{S1.1}, the function $\cH$ in \cref{Eentropy}
plays a very important role in the analysis.
To explain how this function arises, let $\Prob^{x,t}_v$ denote the probability
measure on the canonical path space $\{X_s\colon 0\le s\le t\}$ of the diffusion
\cref{E-sde1} under a control $v\in\Usm$, and 
$\widetilde\Prob^{x,t}_v$ the analogous probability measure
corresponding to the diffusion
\begin{equation*}
\D \widetilde{X}_t \,=\,
\bigl(b_v(\widetilde{X}_t)+a(x)\nabla\varphis(\widetilde{X}_t)\bigr)\,\D t
+ \upsigma (\widetilde{X}_t)\,\D \widetilde{W}_t\,,
\end{equation*}
with $\varphis$ as in \cref{T3.1}.
By the Cameron--Martin--Girsanov theorem we obtain
\begin{equation*}
\frac{\D\mathbb{P}^{x,t}_v}{\D \widetilde\Prob^{x,t}_v} \,=\,
\exp\biggl( -\int_0^{t} \bigl\langle \grad\varphis(\widetilde{X}_s),
\upsigma(\widetilde{X}_s) \D{\widetilde{W}_s}\bigr\rangle
-\frac{1}{2}\int_0^{t}
\babs{\upsigma\transp(\widetilde{X}_s)
\grad \varphis(\widetilde{X}_s)}^2\,\D{s}\biggr)\,.
\end{equation*}
Thus, the \emph{relative entropy}, or Kullback--Leibner
divergence between $\widetilde\Prob^{x,t}_v$ and $\Prob^{x,t}_v$ takes the form
\begin{equation*}
D_{\mathsf{KL}}\bigl(\widetilde\Prob^{x,t}_v \bigm\| \Prob^{x,t}_v\bigr)
\,=\, -\int \log\biggl(\frac{\D \mathbb{P}^{x,t}_v}{\D \widetilde\Prob^{x,t}_v}\biggr)\,
\D \widetilde\Prob^{x,t}_v
\,=\, \frac{1}{2}\,\widetilde\Exp^{x,t}_v\biggl[
\int_0^{t} \babs{\upsigma\transp(\widetilde{X}_s)
\grad \varphis(\widetilde{X}_s)}^2\,\D{s}\biggr]\,.
\end{equation*}
Dividing this by $t$, and letting $t\searrow0$, we see that
$\cH$ is the \emph{infinitesimal relative entropy rate}.

Recall from \cref{S1.1} the definition $\cZ \df \Rd\times\Act\times\Rd$, and
the use of the single variable $z=(x,\xi,y)\in\cZ$ in the interest of notational
simplicity.
Also recall the definitions in \cref{Eeom,Emufinite}.
Recall the definitions in \cref{EsA,EsR}.
In analogy to \cref{E-F}, we define
\begin{equation*}
F(g,\mu) \,\df\, \int_{\cZ} \bigl(\sA g(z)+\sR(z)\bigr)\,\mu(\D{z})
\quad \text{for\ } g \in \Cc^{2}(\Rd)\text{\ and\ } \mu\in\cP(\cZ)\,.
\end{equation*}

The following result plays a central role in this paper.

\begin{proposition}\label{P4.1}
We have
\begin{equation}\label{EP4.1A}
\rhos \,=\, \max_{\mu\in\eom_{\sA}\cap\cPs(\cZ)}\,\int_{\cZ} \sR(z)\,\mu(\D{z})
\,=\, \adjustlimits\sup_{\mu\in\cPs(\cZ)}\inf_{g \in \Cc^2_c(\Rd)}\,F(g,\mu) \,.
\end{equation}
In addition, if $\eom_{\sA}\cap\cPo(\cZ)\subset\cPs(\cZ)$, then
$\cPs(\cZ)$ may be replaced by $\cP(\cZ)$ in \cref{EP4.1A}.
\end{proposition}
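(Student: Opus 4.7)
The plan is to establish the two equalities in \cref{EP4.1A} in three pieces—reducing the sup-inf to a maximization over $\eom_{\sA}\cap\cPs(\cZ)$, bounding that maximum above by $\rhos$ via the HJB, and exhibiting an explicit maximizer—followed by the extension under the containment hypothesis.

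First I would show that, for $\mu\in\cPs(\cZ)$,
\begin{equation*}
\inf_{g\in\Cc^2_c(\Rd)}F(g,\mu)\,=\,\begin{cases}\int_{\cZ}\sR\,\D\mu&\text{if }\mu\in\eom_{\sA},\\-\infty&\text{otherwise.}\end{cases}
\end{equation*}
The first case is immediate from the definition of $\eom_{\sA}$. For the second, any $g_0\in\Cc^2_c(\Rd)$ with $\int\sA g_0\,\D\mu\neq 0$ can be scaled; the requisite integrability of $\sA g_0$ against $\mu$ uses the local non-degeneracy of $a$ on $\supp(\nabla g_0)$, the Cauchy--Schwarz bound $|\langle ay,\nabla g_0\rangle|\le\norm{\upsigma\transp y}\,\norm{\upsigma\transp\nabla g_0}$, and the fact that for the supremum only $\mu\in\cPo$ matters (since $\sR$ is bounded above). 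This reduces everything to proving $\rhos=\sup_{\mu\in\eom_{\sA}\cap\cPs}\int\sR\,\D\mu$, with the supremum attained.

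For the upper bound, the HJB \cref{ET3.1A,E-eigen4} gives the pointwise identity, with $\varphis=\log\Phis$,
\begin{equation*}
\sA\varphis(z)+\sR(z)\,=\,\Lg_\xi\varphis(x)+\cH(x)+\rc(x,\xi)-\tfrac{1}{2}\babs{\upsigma\transp(x)\bigl(y-\grad\varphis(x)\bigr)}^2\,\le\,\rhos\,,
\end{equation*}
since the first three terms on the right are bounded by $\rhos$ from \cref{ET3.1A}. Integrating this against $\mu\in\eom_{\sA}\cap\cPs\cap\cPo$ and discarding the non-positive square term, the upper bound $\int\sR\,\D\mu\le\rhos$ would follow at once if $\int\sA\varphis\,\D\mu=0$. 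To justify this, I would approximate $\varphis$ by $g_n\df\chi_n\varphis\in\Cc^2_c(\Rd)$, where $\chi_n$ is a smooth cutoff with $\chi_n\equiv 1$ on $B_n$, $\supp\chi_n\subset B_{2n}$, and $\norm{\grad\chi_n}_\infty,\norm{\grad^2\chi_n}_\infty\to 0$. Since $g_n\in\Cc^2_c$, $\int\sA g_n\,\D\mu=0$ for every $n$; the error terms carrying factors of $\grad\chi_n$ are controlled by pairing the entropy bound $\int\cH\,\D\mu<\infty$ (from $\mu\in\cPs$) against the bound $\int\abs{\upsigma\transp y}^2\,\D\mu<\infty$ (automatic when $\mu\in\cPo$, since $\rc$ is bounded above) via Cauchy--Schwarz.

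For attainment, take any measurable selector $v_*$ from the maximizer of \cref{ET3.1A} and let $\eta_*$ be the unique invariant probability measure of the exponentially ergodic twisted process furnished by \cref{T3.1}\,(b). Define
\begin{equation*}
\mu_*(\D x,\D\xi,\D y)\,\df\,\eta_*(\D x)\,\delta_{v_*(x)}(\D\xi)\,\delta_{\grad\varphis(x)}(\D y)\,.
\end{equation*}
Then $\mu_*\in\eom_{\sA}$ because the drift $b_{v_*}(x)+a(x)\grad\varphis(x)$ is precisely that of the twisted SDE with invariant measure $\eta_*$, and the pointwise identity above degenerates to $\sA\varphis+\sR=\rhos$ at $(v_*(x),\grad\varphis(x))$, giving $\int\sR\,\D\mu_*=\rhos$. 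To verify $\mu_*\in\cPs$, integrate the HJB relation $\Lg_{v_*}\varphis+\cH+\rc_{v_*}=\rhos$ against $\eta_*$ and use that $\widetilde\Lg^{\varphis}_{v_*}\varphis=\Lg_{v_*}\varphis+2\cH$ together with the invariance identity $\int\widetilde\Lg^{\varphis}_{v_*}\varphis\,\D\eta_*=0$ (justified by a cutoff argument analogous to the one above, exploiting the exponential ergodicity of the twisted process), which yields $\int\cH\,\D\eta_*=\int\rc_{v_*}\,\D\eta_*-\rhos<\infty$ because $\rc$ is bounded above. For the final assertion, if $\eom_{\sA}\cap\cPo\subset\cPs$, then any $\mu\in\cP(\cZ)\setminus\cPo$ has $\int\sR\,\D\mu=-\infty$, hence contributes nothing, so $\sup_{\cP}\inf_g F=\sup_{\cPo}\inf_g F=\sup_{\eom_{\sA}\cap\cPo}\int\sR=\sup_{\eom_{\sA}\cap\cPs}\int\sR$, and likewise $\cP(\cZ)$ replaces $\cPs(\cZ)$ in the maximum.

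The main obstacle will be the cutoff passage in step two: showing rigorously that $\int\sA g_n\,\D\mu\to 0$ forces $\int\sA\varphis\,\D\mu=0$ for every $\mu\in\eom_{\sA}\cap\cPs\cap\cPo$. This demands that the boundary-layer contributions in $\sA(\chi_n\varphis)$ coming from $\chi_n\grad\varphis+\varphis\grad\chi_n$ actually vanish, and the two integrability classes $\cPs$ and $\cPo$ are exactly calibrated to yield this through Cauchy--Schwarz—that is where the definition of $\cH$ and the role of the entropy rate enter essentially.
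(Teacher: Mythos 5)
Your overall architecture matches the paper's: reduce the sup--inf over $\cPs(\cZ)$ to a maximization over $\eom_{\sA}\cap\cPs(\cZ)$, obtain the upper bound from the completed-squares form of the HJB, exhibit the maximizer $\mu_*=\eta_{v_*}\otimes\delta_{v_*(x)}\otimes\delta_{\grad\varphis(x)}$, and note that the final assertion is immediate. The pointwise inequality $\sA\varphis+\tfrac12\babs{\upsigma\transp(y-\grad\varphis)}^2+\sR\le\rhos$ is also exactly the paper's \cref{PP4.1D}. The gap is in how you pass from this pointwise bound to the integrated bound. You propose to prove the exact identity $\int\sA\varphis\,\D\mu=0$ for every $\mu\in\eom_{\sA}\cap\cPs\cap\cPo$ by writing $g_n=\chi_n\varphis$ with a spatial cutoff whose derivative sup-norms tend to zero. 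This does not close. The error term is $\sA g_n-\chi_n\sA\varphis=\varphis\,\sA\chi_n+\langle a\grad\chi_n,\grad\varphis\rangle$; the second piece is indeed handled by $\int\cH\,\D\mu<\infty$, but the first is not. On the annulus $B_{2n}\setminus B_n$ one only has $\abs{\grad\chi_n}\sim n^{-1}$, $\abs{\grad^2\chi_n}\sim n^{-2}$, while \cref{A3.1}\,(i) allows $\abs{b}$ and $\norm{\upsigma}$ to grow linearly, so $\abs{b}\abs{\grad\chi_n}$, $\norm{a}\norm{\grad^2\chi_n}$ and $\abs{\upsigma\transp\grad\chi_n}$ are merely $O(1)$ there, not $o(1)$. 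Multiplying by the unbounded factor $\abs{\varphis}$ (under \cref{A3.1} alone there is no polynomial bound on $\abs{\varphis}$; \cref{L4.1} needs \cref{A4.1}), the boundary-layer integral is of size $\sup_{B_{2n}\setminus B_n}\abs{\varphis}\cdot\bigl(\mu\text{-mass of the annulus}+\int\abs{\upsigma\transp y}\Ind_{\text{annulus}}\,\D\mu\bigr)$, and nothing in $\mu\in\cPs\cap\cPo$ gives the tail decay needed to make this vanish. Neither $\cH$ nor $\abs{\upsigma\transp y}^2$ integrability controls a term carrying the weight $\varphis$.

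Moreover, the target identity itself is stronger than what holds or is needed: for a general $\mu\in\eom_{\sA}\cap\cPs$ the paper never asserts $\int\sA\varphis\,\D\mu=0$, only a one-sided statement. The device that makes this work is to cut off in the \emph{range} of $\varphis$ rather than in space: with $\chi^{}_t$ as in \cref{D4.1}, inf-compactness of $-\varphis$ (from \cref{T3.1}\,(a)) makes $\chi^{}_t\comp\varphis$ constant outside a compact set, so $\int\sA(\chi^{}_t\comp\varphis)\,\D\mu=0$ holds \emph{exactly} for $\mu\in\eom_{\sA}$, and the composition rule produces the single error term $\tfrac12\chi''_t(\varphis)\abs{\upsigma\transp\grad\varphis}^2=\chi''_t(\varphis)\cH$, which is precisely what membership in $\cPs$ controls; dominated convergence kills it and Fatou (with the $G^-$ part dominated using $\int\cH\,\D\mu<\infty$ and $\int\abs{\upsigma\transp y}^2\,\D\mu<\infty$) yields \cref{PT4.1H}, hence $\int\sR\,\D\mu\le\rhos$. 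You should replace your multiplicative cutoff by this composition cutoff. The same remark applies to your verification that $\mu_*\in\cPs$: the identity $\int\widetilde\Lg^{\varphis}_{v_*}\varphis\,\D\eta_{v_*}=0$ is not a generic cutoff fact; the paper justifies it through the Lyapunov structure of \cref{ET3.1B} (with $\abs{\varphis}\big/\Phis^{-1}$ vanishing at infinity) and \cite[Lemma~3.7.2\,(ii)]{ABG}, which is the precise sense in which ``exponential ergodicity'' enters.
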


In the proof of \cref{P4.1} and elsewhere in the paper we use
a cut-off function $\chi$ defined as follows (compare this with the function
$\Breve\chi$ in the proof of \cref{T3.1}).

\begin{definition}\label{D4.1}
Let $\chi\colon\RR\to\RR$ be a smooth convex function such that
$\chi(s)= s$ for $s\ge0$, and $\chi(s) = -1$ for $s\le -2$.
Then $\chi'$ and $\chi''$ are nonnegative and the latter is supported
on $(-2,0)$.
It is clear that we can choose $\chi$ so that $\chi''< 1$.
We scale this function by defining
$\chi^{}_t(s) \df -t + \chi(s+t)$ for $t\in\RR$.
Thus $\chi^{}_t(s)=s$ for $s\ge -t$, and $\chi^{}_t(s) = -t-1$ for $s\le -t-2$.
Observe that if $-f$ is an inf-compact function
then $\chi_t^{}(f)+t+1$ is compactly supported by the definition of $\chi$.
\end{definition}

\begin{proof}[Proof of \cref{P4.1}]
We start with the first equality in \cref{EP4.1A}.
By \cref{E-eigen2}, we have
\begin{equation}\label{PP4.1A}
\widetilde\Lg_{v_*}^{\varphis}\varphis(x)
+\rc_{v_*}(x) -\cH(x) \,=\, \rhos\,.
\end{equation}
As shown in \cref{T3.1} the twisted process $\Tilde{X}$ with extended
generator $\widetilde\Lg_{v_*}^{\varphis}$ is exponentially ergodic.
Let $\eta_{v_*}$ denote its invariant probability measure.
Since $\frac{\abs{\varphis}}{\Phis^{-1}}$ vanishes at infinity,
and $\Phis^{-1}$ is a Lyapunov function by \cref{ET3.1B},
it then follows from \cref{PP4.1A}, by using
the It\^o formula and applying \cite[Lemma~3.7.2\,(ii)]{ABG}, that
\begin{equation}\label{PP4.1B}
\rhos \,=\, \int_\Rd \bigl(\rc_{v_*}(x)- \cH(x)\bigr)\,\eta_{v_*}(\D{x})
\,=\, \int_\Rd \sR\bigl(x,v_*(x), \grad\varphis(x)\bigr)\,\eta_{v_*}(\D{x})\,.
\end{equation}

Next, we show that
\begin{equation}\label{PP4.1C}
\rhos \,\ge\, \int_{\cZ} \sR(z)\,\mu(\D{z})
\quad\forall\, \mu\in\eom_{\sA}\cap\cPs(\cZ)\,.
\end{equation}
We write \cref{ET3.1A} as
\begin{equation*}
\max_{\xi\in\Act}\,\Bigl[\Lg_\xi \varphis(x)
+\tfrac{1}{2} \babs{\upsigma\transp(x)\grad\varphis(x)}^2
+ \rc(x,\xi) \Bigr] \,=\,\rhos \quad\forall\,x\in\Rd\,,
\end{equation*}
and using the identity
\begin{equation*}
\Lg_\xi \varphis + \tfrac{1}{2} \babs{\upsigma\transp\grad\varphis}^2
\,=\, \Lg_\xi \varphis + \langle a y, \grad\varphis\rangle
+\tfrac{1}{2} \babs{\upsigma\transp(y-\grad\varphis)}^2
- \tfrac{1}{2} \abs{\upsigma\transp y}^2
\end{equation*}
to obtain (compare with \cref{E-eigen4})
\begin{equation}\label{PP4.1D}
\sA\varphis(x,\xi,y) 
+ \tfrac{1}{2} \babs{\upsigma\transp(x)\bigl(y-\grad\varphis(x)\bigr)}^2
+ \sR(x,\xi,y) \,\le\,\rhos\,.
\end{equation}

Using the function $\chi^{}_t$ in \cref{D4.1},
the identity
\begin{equation*}
\sA \chi^{}_t(\varphis) \,=\, \chi'_t(\varphis) \sA\varphis
+ \tfrac{1}{2} \chi''_t(\varphis)
\babs{\upsigma\transp\grad\varphis}^2\,,
\end{equation*}
and the definition of $\cH$,
we obtain from \cref{PP4.1D} that
\begin{equation}\label{PP4.1E}
\begin{aligned}
\sA (\chi^{}_t\comp\varphis)&(x,\xi,y) - \chi''_t\bigl(\varphis(x)\bigr)\,\cH(x)\\
& + \chi'_t\bigl(\varphis(x)\bigr)\Bigl(
\tfrac{1}{2} \babs{\upsigma\transp(x)\bigl(y-\grad\varphis(x)\bigr)}^2
+ \sR(x,\xi,y) - \rhos\Bigr) \,\le\,0\,.
\end{aligned}
\end{equation}
Let $\mu\in\eom_{\sA}\cap\cPs(\cZ)$, and without loss of generality assume
that $\mu\in\cPo(\cZ)$.
The integral of the first term in \cref{PP4.1E} with respect to $\mu$
vanishes by the definition of $\eom_{\sA}$.
Thus, we have
\begin{equation}\label{PP4.1F}
\begin{aligned}
\int_{\cZ}
\chi'_t\bigl(\varphis(x)\bigr)\Bigl(
\tfrac{1}{2} \babs{\upsigma\transp(x)\bigl(y-\grad\varphis(x)\bigr)}^2
&+ \sR(x,\xi,y) - \rhos\Bigr)\,\mu(\D{x},\D{\xi},\D{y})\\
&\,\le\, \int_{\Rd} \chi''_t\bigl(\varphis(x)\bigr)\,\cH(x)\,\eta(\D{x})\,,
\end{aligned}
\end{equation}
with $\eta(\cdot) = \int_{\Act\times\Rd}\mu(\cdot\,,\D{\xi},\D{y})$.
Since $\int\cH\D\eta<\infty$,
then taking limits as $t\to\infty$ in \cref{PP4.1F}, using
dominated convergence
together with the fact that $\chi''_t(s)\to 0$ as $t\to\infty$, we see that
the right-hand side of \cref{PP4.1F} goes to $0$. Also, using Fatou's lemma and
the fact that
$\chi'_t(s)\to 1$ as $t\to\infty$, we obtain from \cref{PP4.1F} that
\begin{equation}\label{PT4.1H}
\int_{\cZ}
\Bigl(\tfrac{1}{2} \babs{\upsigma\transp(x)\bigl(y-\grad\varphis(x)\bigr)}^2
+ \sR(x,\xi,y)\Bigr) \,\mu(\D{x},\D{\xi},\D{y})\,\le\,\rhos\,.
\end{equation}
This proves \cref{PP4.1C}.
Now, if we let
\begin{equation*}
\mu_*(\D{x},\D{\xi},\D{y})\df
\eta_{v_*}(\D{x}) \delta_{v_*(x)}(\D{\xi})\delta_{\nabla\varphi_*(x)}(\D{y})\,,
\end{equation*}
then
\begin{equation*}
\int_{\cZ} \sA f(z)\,\mu_*(\D{z}) \,=\, \int_{\Rd}
\widetilde\Lg_{v_*}^{\varphis} f(x)\,\eta_{v_*}(\D{x})\,=\,0
\quad\forall\,f\in\Cc^{2}_c(\Rd)\,,
\end{equation*}
which implies that $\mu_*\in\eom_{\sA}$.
Then, the second equality in \cref{PP4.1B} can be written as
\begin{equation}\label{PP4.1I}
\rhos \,=\, \int_{\cZ} \sR(z)\,\mu_*(\D{z})\,,
\end{equation}
while the first equality in \cref{PP4.1B} together with
the fact that $c$ is bounded above and $\rhos$ is finite
implies that $\mu_*\in\cPs(\cZ)$.
Therefore, $\mu_*\in\eom_{\sA}\cap\cPs(\cZ)$,
and the first equality in \cref{EP4.1A} now follows from \cref{PP4.1C,PP4.1I}.

We now turn to the proof of the second equality in \cref{EP4.1A}.
Note that it $\mu\notin\cPo(\cZ)$ then $F(0,\mu)=-\infty$.
On the other hand, if $\mu \notin\eom_\sA$ then, as also
stated in the proof of \cref{T2.2}, $\inf_{g \in \Cc^2_c(\Rd)}\,F(g,\mu)=-\infty$.
The remaining case is $\mu\in\eom_\sA\cap\cPs(\cZ)$, for which we have
$F(g,\mu)=\int_{\cZ} \sR(z)\,\mu(\D{z})$, thus proving the equality.

The second statement of the proposition follows directly from the arguments
used above.
\end{proof}

\begin{remark}
One can follow the argument in the proof of \cite[Theorem~1.4]{ABB-18},
using Radon--Nikodym derivatives
instead of densities, to show that every maximizing infinitesimal
ergodic occupation measure for \cref{EP4.1A} has the form
\begin{equation*}
\mu(\D{x},\D{\xi},\D{y}) \,=\,
\uppi(\D{x},\D{\xi})\,\delta_{\nabla\varphis(x)}(\D{y})\,,
\end{equation*}
where $\delta_y$ denotes the Dirac mass at $y\in\Rd$,
and $\uppi(\D{x},\D{\xi})$ is an optimal ergodic occupation measure of
the diffusion associated with operator $\sA^*$ defined by
\begin{equation*}
\sA^*\phi(x,\xi)\,\df\,\frac{1}{2}\trace\left(a(x)\nabla^{2}\phi(x)\right)
+ \bigl\langle b(x,\xi)+ a(x)\grad\varphis(x), \nabla \phi(x)\bigr\rangle
\end{equation*}
for $(x,\xi)\in\Rd\times\Act$ and
$f\in\Cc^2(\Rd)$.
We leave the verification of this assertion to the reader.
\end{remark}

We continue our analysis by investigating conditions on the model parameters which
imply that $\eom_{\sA}\cap\cPo(\cZ)\subset\cPs(\cZ)$.
We impose the following hypothesis on the matrix $a$.

\begin{assumption}\label{A4.1}
The matrix $a$ is bounded and
has a uniform modulus of continuity on $\Rd$, and is uniformly non-degenerate in
the sense that the minimum eigenvalue of $a$ is bounded away from zero on $\Rd$.
\end{assumption}

We start with the following lemma, which can be viewed as a generalization of
\cite[Lemma~3.3]{AB-18}.
\cref{A3.1}, which applies by default throughout the paper, need not  be
enforced in this lemma.

\begin{lemma}\label{L4.1}
Consider a linear operator in $\RR^d$, of the form
\begin{equation*}
\Lg \,\df\, \tfrac{1}{2} a^{ij}\partial_{ij} + b^i \partial_i + c\,,
\end{equation*}
and suppose that the matrix $a=\upsigma\upsigma\transp$ satisfies \cref{A4.1},
and the coefficients $b$ and $c$ are locally bounded and measurable.
Then, there exists a constant $\widetilde{C}_0$ such that any strong positive solution
$u \in\Sobl^{2,p}(\RR^d)$, $p>d$, to the equation
\begin{equation}\label{EL4.1B}
\Lg u(x) \,=\, 0 \quad \text{on } \RR^d
\end{equation}
satisfies
\begin{equation*}
\frac{\babs{\grad u(x)}}{u(x)} \,\le\, \widetilde{C}_0
\,\Bigl[1+ \sup_{y\in B_1(x)}\,\Bigl(\abs{b(y)} + \sqrt{\abs{c(y)}}\Bigr)\Bigr]
\qquad\forall\,x\in\Rd\,.
\end{equation*}
\end{lemma}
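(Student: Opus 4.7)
The plan is to reduce the question to a scale-invariant gradient estimate by a rescaling argument. Fix $x_0\in\Rd$ and set
\begin{equation*}
K\,\df\,\sup_{y\in B_1(x_0)}\bigl(\abs{b(y)} + \sqrt{\abs{c(y)}}\bigr)\,,\qquad
\rho\,\df\,\frac{1}{1+K}\,\in\,(0,1]\,.
\end{equation*}
Define $\widetilde{u}(y)\df u(x_0+\rho y)$ for $y\in B_1(0)$. A direct computation shows that $\widetilde{u}\in\Sobl^{2,p}(B_1(0))$ is a positive solution of
\begin{equation*}
\tfrac{1}{2}\widetilde{a}^{ij}(y)\partial_{ij}\widetilde{u}(y) + \widetilde{b}^i(y)\partial_i\widetilde{u}(y) + \widetilde{c}(y)\widetilde{u}(y) \,=\, 0\,,
\end{equation*}
with $\widetilde{a}(y)\df a(x_0+\rho y)$, $\widetilde{b}(y)\df\rho\, b(x_0+\rho y)$, and $\widetilde{c}(y)\df \rho^2 c(x_0+\rho y)$. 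By the choice of $\rho$, $\sup_{B_1(0)}\abs{\widetilde{b}}\le 1$ and $\sup_{B_1(0)}\abs{\widetilde{c}}\le 1$; moreover $\widetilde{a}$ inherits the ellipticity bounds of $a$, and since $\rho\le 1$ its modulus of continuity on $B_1(0)$ is no worse than that of $a$ on $\Rd$.

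The bulk of the argument is then to establish a single universal bound $\abs{\grad\widetilde{u}(0)}\le C\, \widetilde{u}(0)$, with $C$ depending only on $d$, $p$, the ellipticity constants, and the modulus of continuity of $a$. I would obtain this by chaining three standard non-divergence-form elliptic estimates. First, the Krylov--Safonov Harnack inequality applied to $\widetilde{u}$, viewed as a strong solution of an equation with bounded lower-order coefficients and source $-\widetilde{c}\widetilde{u}\in\Lp^\infty(B_1(0))$, yields $\sup_{B_{3/4}(0)}\widetilde{u}\le C_1 \inf_{B_{3/4}(0)}\widetilde{u}$. Second, interior Calder\'on--Zygmund estimates applied after rewriting the equation as $\tfrac{1}{2}\widetilde{a}^{ij}\partial_{ij}\widetilde{u}= -\widetilde{b}^i\partial_i\widetilde{u} - \widetilde{c}\widetilde{u}$, combined with the standard interpolation $\norm{\grad\widetilde{u}}_{\Lp^p}\le \epsilon\,\norm{D^2\widetilde{u}}_{\Lp^p}+ C_\epsilon\norm{\widetilde{u}}_{\Lp^p}$ to absorb the first-order term, give $\norm{\widetilde{u}}_{\Sob^{2,p}(B_{1/2}(0))}\le C_2\, \norm{\widetilde{u}}_{\Lp^\infty(B_{3/4}(0))}$. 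Third, the Sobolev embedding $\Sob^{2,p}(B_{1/2}(0))\hookrightarrow \Cc^1(\overline{B_{1/4}(0)})$ (valid since $p>d$) yields $\abs{\grad\widetilde{u}(0)}\le C_3\, \norm{\widetilde{u}}_{\Sob^{2,p}(B_{1/2}(0))}$. In each step the constant depends only on the universal quantities because $\norm{\widetilde{b}}_{\Lp^\infty},\norm{\widetilde{c}}_{\Lp^\infty}\le 1$. Chaining produces $\abs{\grad\widetilde{u}(0)}\le C_1C_2C_3\, \widetilde{u}(0)$.

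Finally, unscaling via $\grad\widetilde{u}(0)=\rho\,\grad u(x_0)$ and $\widetilde{u}(0)=u(x_0)$ completes the proof:
\begin{equation*}
\frac{\abs{\grad u(x_0)}}{u(x_0)}\,\le\, \frac{C_1C_2C_3}{\rho}
\,=\, \widetilde{C}_0\,(1+K)
\,\le\, \widetilde{C}_0\Bigl[1+\sup_{y\in B_1(x_0)}\bigl(\abs{b(y)}+\sqrt{\abs{c(y)}}\bigr)\Bigr]\,,
\end{equation*}
with $\widetilde{C}_0\df C_1C_2C_3$. The delicate point is verifying that every constant in the chain genuinely depends only on the universal quantities and \emph{not} on $K(x_0)$; this is precisely what the rescaling by $\rho$ is designed to ensure. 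The uniform ellipticity and the uniform modulus of continuity assumed in \cref{A4.1} are what allow a single constant $\widetilde{C}_0$ to work at every $x_0\in\Rd$; without the uniform modulus of continuity the Calder\'on--Zygmund estimate in step two would produce a constant $C_2$ depending on $x_0$, breaking the argument.
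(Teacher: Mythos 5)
Your proof is correct and follows essentially the same route as the paper's: rescale around $x_0$ by $\bigl(1+\sup_{B_1(x_0)}(\abs{b}+\sqrt{\abs{c}})\bigr)^{-1}$ so that the scaled lower-order coefficients are bounded by a universal constant, then chain the Harnack inequality, an interior $\Sob^{2,p}$ estimate, and the Sobolev embedding $\Sob^{2,p}\hookrightarrow\Cc^{1}$ for $p>d$, and finally unscale. The only cosmetic differences are that the paper scales over $B_3(x_0)$ and remarks that the radius is arbitrary, and that your Harnack step is better quoted as the Krylov--Safonov Harnack inequality for nondegenerate operators with bounded measurable first- and zeroth-order coefficients (rather than the ``bounded source'' form, whose right-hand side would reintroduce $\sup\tilde u$); this is exactly the form the paper itself invokes.
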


\begin{proof}
We use scaling.
For any fixed $x_0\in\RR^d$, with $\abs{x_0} \ge 1$, we define
\begin{equation*}
M_{x_0} \df 1+ \sup_{x\in B_3(x_0)}\,\Bigl(\abs{b(x)} + \sqrt{\abs{c(x)}}\Bigr)\,,
\end{equation*}
and the scaled function
\begin{equation*}
\Tilde{u}_{x_0}(y) \,\df\, u\bigl(x_0 + M_{x_0}^{-1} y \bigr)\,,\quad y\in\Rd\,,
\end{equation*}
and similarly for the functions $\Tilde{a}_{x_0}$, $\Tilde{b}_{x_0}$,
and $\Tilde{c}_{x_0}$.
The equation in \cref{EL4.1B} then takes the form
\begin{equation}\label{PL4.1A}
\frac{1}{2}\,\Tilde{a}^{ij}_{x_0}(y)\,
\partial_{ij}\Tilde{u}_{x_0}(y)
+ \frac{\Tilde{b}^i_{x_0}(y)}{M_{x_0}}\,
\partial_{i}\Tilde{u}_{x_0}(y)
+ \frac{\Tilde{c}_{x_0}(y)}{M_{x_0}^2}\,\Tilde{u}_{x_0}(y) \,=\, 0
\quad \text{on }\RR^d\,.
\end{equation}
It is clear from the hypotheses that the coefficients of \cref{PL4.1A}
are bounded in the ball $B_3$, with a bound independent of $x_0$, and that
the modulus of continuity and ellipticity constants of the matrix $\Tilde{a}_{x_0}$
in $B_3$ are independent of $x_0$.
We follow the argument in \cite[Lemma~3.3]{AB-18}, which is repeated here for
completeness.
First, by the Harnack inequality \cite[Theorem~9.1]{GilTru}, there exists
a positive constant $C_{\mathsf{H}}$ independent of the point $x_0$ chosen, such that
$\Tilde{u}_{x_0}(y)\le C_{\mathsf{H}}\, \Tilde{u}_{x_0}(y')$ for all $y,y' \in B_2$.
Let
\begin{equation*}
\Lg_0 \df \frac{1}{2}\,\Tilde{a}^{ij}_{x_0}(y)\,
\partial_{ij}
+ \frac{\Tilde{b}^i_{x_0}(y)}{M_{x_0}}\,\partial_i\,.
\end{equation*}
By a well known a priori estimate \cite[Lemma~5.3]{ChenWu}, there
exists a constant $C_{\mathsf{a}}$, again independent of $x_0$, such that,
\begin{equation}\label{PL4.1B}
\begin{aligned}
\bnorm{\Tilde{u}_{x_0}}_{\Sob^{2,p}(B_1)} &\,\le\, C_{\mathsf{a}}\,
\Bigl(\bnorm{\Tilde{u}_{x_0}}_{\Lp^{p}(B_2)}
+\bnorm{\Lg_0\,\Tilde{u}_{x_0}}_{\Lp^{p}(B_2)}\Bigr)\\
&\,\le\, C_{\mathsf{a}}\,\biggl(1+\sup_{y\in B_2}\,
\frac{\Tilde{c}_{x_0}(y)}{M_{x_0}^2}\biggr)\,
\bnorm{\Tilde{u}_{x_0}}_{\Lp^{p}(B_2)}\\
&\,\le\,\widetilde{C}_1\,\Tilde{u}_{x_0}(0)\,,
\end{aligned}
\end{equation}
where in the last inequality, we used the Harnack property.
Clearly then, the resulting constant $\widetilde{C}_1$ does not depend on $x_0$.
Next, invoking Sobolev's theorem, which asserts the compactness of the embedding
$\Sob^{2,p}\bigl(B_1(x_0)\bigr)\hookrightarrow \Cc^{1,r}\bigl(B_1(x_0)\bigr)$,
for $p>d$
and $r<1-\frac{d}{p}$ (see \cite[Proposition~1.6]{ChenWu}), and
combining this with \cref{PL4.1B}, we obtain
\begin{equation*}
\sup_{y\in B_1}\,\babs{\grad\Tilde{u}_{x_0}(y)} \,\le\,
\widetilde{C}_2\, \Tilde{u}_{x_0}(x_0)
\end{equation*}
for some constant $\widetilde{C}_2$ independent of $x_0$.
Thus
\begin{equation}\label{PL4.1C}
\frac{\abs{\grad\Tilde{u}_{x_0}(0)}}{\Tilde{u}_{x_0}(0)}
\,\le\, \widetilde{C}_2
\qquad\forall\,x_0\in B_1^c\,.
\end{equation}
Using \cref{PL4.1C} and the identity
$\grad{u}(x_0) = M_{x_0}
\,\grad\Tilde{u}_{x_0}(0)$ for all $x_0\in B_1^c$,
we obtain
\begin{equation*}
\frac{\babs{\grad{u}(x_0)}}{{u}(x_0)} \,=\,
M_{x_0}\,
\frac{\babs{\grad\Tilde{u}_{x_0}(0)}}{\Tilde{u}_{x_0}(0)}
\,\le\,\widetilde{C}_2\,
\biggl[1+ \sup_{x\in B_3(x_0)}\,\Bigl(\abs{b(x)} + \sqrt{\abs{c(x)}}\Bigr)\biggr]
\qquad \forall\,x_0\in B_1^c\,.
\end{equation*}
Of course $B_3(x_0)$ is arbitrary. The same is true with any radius, with
perhaps a different constant.
This completes the proof.
\end{proof}

\begin{remark}
\Cref{L4.1} should be compared with similar gradient estimates in the literature.
Its benefit is that it matches or exceeds the estimates
in \cite[Lemma~5.1]{Metafune-05}
and \cite[Theorem~A.2]{Chasseigne-19}, without requiring
any regularity on the coefficients.
\end{remark}

\begin{assumption}\label{A4.2}
One of the following holds:
\begin{enumerate}
\item[\ttup a] The function $-\rc$ is inf-compact.

\item[\ttup b] The drift $b$ satisfies
\begin{equation}\label{E-weak}
\max_{(x,\xi)\in B_r^c\times\Act}\;
\frac{ \bigl\langle b(x,\xi),\, x\bigr\rangle^{-}}{\abs{x}^{2}}
\;\xrightarrow[r\to\infty]{}\;0\,.
\end{equation}

\item[\ttup c]
There exists a constant $\widehat{C}_0$ such that
(compare this with \cite[Theorem~3.1\,(b)]{AB-18b})
\begin{equation*}
\frac{\cH(x)}{\bigl(1+\abs{\varphis(x)}\bigr)\,\bigl(1+\abs{\rc(x,\xi)}\bigr)}
\,\le\,\widehat{C}_0\qquad\forall\,(x,\xi)\in\Rd\times\Act\,,
\end{equation*}
where $\varphis=\log\Phis$, and $\Phis$ is as in \cref{T3.1}.
\end{enumerate}
\end{assumption}

\begin{remark}
\Cref{A4.2}\,(c) is not specified in terms of the parameters of
the equation. However, \cref{A4.1} together with the hypothesis that
$\frac{\abs{b}^2}{1+\abs{\rc}}$ is bounded implies \cref{A4.2}\,(c).
This is asserted by \cref{L4.1}.
See also \cref{L4.3} later in this section.
\end{remark}

We have the following estimate concerning the growth of the function $\Phis$
in \cref{T3.1}. This does not require the uniform ellipticity hypothesis
in \cref{A4.1}.

\begin{lemma}\label{L4.2}
Grant \cref{A4.2} part \textup{(a)} or \textup{(b)}.
Then there exists a function $\zeta\colon(0,\infty)\to(0,\infty)$,
with $\lim_{r\to\infty}\zeta(r)=\infty$, such that the solution $\Phis$ in
\cref{ET3.1A} satisfies
\begin{equation}\label{EL4.2A}
\babs{\log\Phis(x)} \,\ge\, \zeta(r)\,\log\bigl(1+\abs{x}\bigr)
\qquad\forall\,x\in B_r^c\,.
\end{equation}
\end{lemma}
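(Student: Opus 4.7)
The plan is to establish, for each positive integer $M$, the existence of a radius $R_M$ and constant $C_M$ such that $\Phis(x)\le C_M(1+\abs{x}^2)^{-M/2}$ on $B_{R_M}^c$. Taking logarithms yields $-\log\Phis(x)\ge (M/2)\log(1+\abs{x}^2)-\log C_M$ on $B_{R_M}^c$, from which we define $\zeta$ piecewise by $\zeta(r)\df M/4$ on $(R_M',R_{M+1}']$, choosing $R_M'\ge R_M$ large enough that $(M/4)\log(1+R_M')\ge \log C_M$; this gives $\zeta(r)\to\infty$ and \cref{EL4.2A}, using that $\abs{\log\Phis}=-\log\Phis$ outside a sufficiently large ball (by \cref{T3.1}\,(a) the function $\Phis$ vanishes at infinity).

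To produce the pointwise bound I would apply the argument of the proof of \cref{T3.1}\,(a), with the trial supersolution $\phi_M(x)\df(1+\abs{x}^2)^{-M/2}$ in place of the fixed-exponent ansatz. A direct calculation gives
\begin{equation*}
\frac{\Lg_\xi\phi_M(x)}{\phi_M(x)} \,=\, -\frac{M\trace a(x)}{2(1+\abs{x}^2)} +\frac{M(M+2)\,x\transp a(x)x}{2(1+\abs{x}^2)^2} -\frac{M\langle b(x,\xi),x\rangle}{1+\abs{x}^2}\,,
\end{equation*}
which under \cref{A3.1} is bounded above by a quantity of the form $K_1(M)+K_2(M)\,\langle b(x,\xi),x\rangle^-/(1+\abs{x}^2)$. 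The supersolution condition $\Lg_\xi\phi_M+(\rc(x,\xi)-\rhos)\phi_M\le 0$ on $B_{R_M}^c$ then reduces to dominating this bound by $\rhos-\rc(x,\xi)$.

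Under \cref{A4.2}\,(a), the inequality $\rhos-\rc(x,\xi)\to+\infty$ uniformly in $\xi$ absorbs any $M$-dependent bound, so the supersolution property holds on $B_{R_M}^c$ for $R_M$ sufficiently large. Under \cref{A4.2}\,(b), the drift term $\langle b,x\rangle^-/(1+\abs{x}^2)\to 0$, but $K_1(M)$ does not decay with $\abs{x}$; this obstruction is handled by passing to the smoothed function $\Breve\chi_\epsilon\comp\phi_M$ from \cref{D4.1} and using the scaling property $t\Breve\chi''_\epsilon(t)<2$, which allows the $(\Breve\chi''_\epsilon\abs{\upsigma\transp\grad\phi_M}^2)$ contribution to be absorbed into the fixed margin $\rhos-\rc\ge 2\delta>0$ supplied by \cref{A3.1}\,(iii), mirroring the key calculation \cref{ER3.2D} in the proof of \cref{T3.1}\,(a). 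With the strict supersolution in hand on $B_{R_M}^c$, one scales the Neumann eigenfunction $V_n$ of \cref{E-HJBN} until it touches $\Breve\chi_\epsilon\comp\phi_M$ from below: the strong maximum principle excludes the contact point from $B_n\setminus B_{R_M}$, the Hopf boundary point lemma excludes it from $\partial B_n$, so it must lie in $B_{R_M}$, where the Harnack inequality provides the required uniform control. Passing to the limits $\epsilon\searrow 0$ and $n\to\infty$, combined with the convergence $V_n\to\Phis$ from \cref{T3.1}\,(d), yields $\Phis\le C_M\phi_M$ on $B_{R_M}^c$.

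The main obstacle is the verification of the strict supersolution property uniformly in $M$ under hypothesis (b), where the margin $\rhos-\rc$ is merely bounded below by a fixed constant $2\delta$ and cannot itself absorb the $O(M^2)$ contribution from the quadratic-form term $x\transp a\,x/(1+\abs{x}^2)^2$, which under \cref{A3.1} alone is only bounded and does not decay at infinity. The remedy is precisely the convex cut-off $\Breve\chi_\epsilon$, whose scaling property converts this pointwise obstruction into one absorbable by $2\delta$, enabling the $M$-indexed argument to run at the cost of an $M$-dependent radius $R_M$.
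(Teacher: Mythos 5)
Your reduction to polynomial decay of every order $M$, the piecewise construction of $\zeta$, and the case (a) argument are fine: under \cref{A4.2}\,(a) the margin $\rhos-\rc$ tends to infinity, so for each fixed $M$ the $O(M^2)$ terms in $\Lg_\xi\phi_M/\phi_M$ can be absorbed beyond an $M$-dependent radius, and the comparison with $V_n$ via the cut-off, the Hopf lemma and Harnack goes through exactly as in the proof of \cref{T3.1}\,(a). This is close in spirit to the paper, which also uses a polynomial-type barrier, but compares at the level of $\varphis=\log\Phis$ with the twisted operator and a barrier $(\alpha(r)/\abs{x})^{\kappa(r)}$ whose exponent is tied to the radius.

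The gap is in case (b), precisely at the point you flag as the ``main obstacle.'' The convex cut-off $\Breve\chi_\epsilon$ cannot absorb the term $\tfrac{1}{2}M(M+2)\,x\transp a(x)x/(1+\abs{x}^2)^2$ into the fixed margin $2\delta$: on the region $\{\phi_M\ge 2\epsilon\}$ (which, after choosing $R_\epsilon=n$, is most of the annulus $B_n\setminus B_{R_M}$ where the supersolution inequality must hold) one has $\Breve\chi_\epsilon(\phi_M)=\phi_M$, $\Breve\chi'_\epsilon=1$, $\Breve\chi''_\epsilon=0$, so the composed barrier satisfies exactly the unmodified inequality $\Lg_\xi\phi_M+(\rc-\rho_n)\phi_M\le 0$; where the cut-off is active its $\Breve\chi''_\epsilon$ term only \emph{adds} a nonnegative quantity. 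Since \cref{L4.2} is stated without \cref{A4.1}, the matrix $a$ may grow quadratically (e.g.\ $a(x)=(1+\abs{x}^2)I$ is admissible under \cref{A3.1}), and then this term is bounded below by a positive constant of order $M^2$ that does not decay in $\abs{x}$, so for $M$ large no radius $R_M$ makes $\phi_M$ (or $\Breve\chi_\epsilon\comp\phi_M$) a supersolution when the margin from \cref{A3.1}\,(iii) is only a fixed $\delta$; the same problem appears in the bound $\abs{\upsigma\transp\nabla\phi_M}\le\delta\phi_M$ needed for \cref{ER3.2D}. The paper's proof avoids this not through the cut-off but by coupling the decay exponent to the radius: the barrier is $h_r(x)=-\kappa(r)\bigl(\log\abs{x}-\log\alpha(r)\bigr)$ with $\kappa(r)\le\min\bigl(\sqrt r,\;\epsilon/(2\sqrt{m_r}),\;\inf_{B_r^c}\abs{\varphis}^{\nicefrac13}\bigr)$, so the first-order contribution $\kappa(r)\,m_r$ and the quadratic gradient contributions carry small factors and fit under the fixed margin $\epsilon$, while the super-polynomial decay comes from $\kappa(r)\to\infty$ as $r\to\infty$ rather than from a fixed exponent pushed to a large radius; the comparison is done against $\varphis$ via \cref{PP4.1A}, completing the square, with the cut-off $\chi_t$ and the inf-compactness of $-\varphis$ handling the behavior at infinity. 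Your fixed-$M$ scheme for case (b) does work if one additionally assumes $a$ bounded (as in \cref{A4.1}, which holds in the applications of the lemma), because then the quadratic-form and gradient terms decay for each fixed $M$; but as a proof of \cref{L4.2} as stated, the case (b) argument is not repaired by the cut-off and needs the paper's radius-dependent exponent (or an added boundedness hypothesis on $a$).
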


\begin{proof}
We start with part (a).
Let $\alpha\colon(0,\infty)\to(0,\infty)$ be a strictly increasing function,
satisfying $\alpha(r)\to\infty$ and $\frac{\alpha(r)}{r}\to0$ as $r\to\infty$, and
\begin{equation}\label{PL4.2C}
\log\alpha(r) \,\ge\, \log r - \inf_{B_r^c}\,\abs{\varphis}^{\nicefrac{1}{3}}\,.
\end{equation}
This is always possible.
A specific function satisfying these properties is given by
\begin{equation*}
\alpha(r) \,\df\, \sqrt r + \sup_{s\in (0, r]}\,
\biggl(s \exp\Bigl(-\inf_{B_r^c}\,\abs{\varphis}^{\nicefrac{1}{3}}\Bigr)\biggr)\,.
\end{equation*}
Let $c_1$ be a constant such that $\babs{\Lg_{v_*} (\log\abs{x})} \le c_1$
for all $\abs{x}>1$. Such a constant exists since $\upsigma$ and
$b$ have at most linear growth in $\abs{x}$ by \cref{E-growth}.
We define
\begin{equation}\label{PL4.2D}
\kappa(r) \,\df\, \min\, \biggl(\sqrt r\,,\,\frac{1}{c_1}\inf_{B_r^c\times\Act}\,
\babs{\rc(x,\xi)-\rhos}^{\nicefrac{1}{2}}\,,\,
\inf_{B_r^c}\,\abs{\varphis}^{\nicefrac{1}{3}}\biggr)\,.
\end{equation}
Since the functions $-\varphis$ and $-\rc$ are inf-compact, it
is clear that $\kappa(r)\to\infty$ as $r\to\infty$.

Define the family of functions
\begin{equation*}
h_r(x)\,\df\,-\kappa(r)\bigl(\log\abs{x} - \log \alpha(r)\bigr)\,,
\qquad r\ge1\,,\ x\in B_r^c\,.
\end{equation*}
Note that for any $g\in\Cc^2(\Rd)$ we have
\begin{equation}\label{PL4.2E}
\Lg_\xi \chi^{}_t(g) \,=\, \chi'_t(g) \Lg_\xi(g) +
\frac{1}{2}\chi''_t(g) \babs{\upsigma\transp\nabla g}^2\,.
\end{equation}
Thus, applying \cref{PL4.2E} and the bound $\babs{\Lg_{v_*} (\log\abs{x})} \le c_1$,
we obtain
\begin{equation}\label{PL4.2F}
\begin{aligned}
\widetilde\Lg^{\varphis}_{v_*}
\chi_t\bigl(h_r(x)\bigr) \,\le\, c_1\,\kappa(r)\,\chi'_t & \bigl(h_r(x)\bigr)
+ \bigl\langle a(x)\nabla\varphis(x), \nabla \chi_t\bigl(h_r(x)\bigr) \bigr\rangle\\
&+ \frac{1}{2}\, \chi_t''\bigl(h_r(x)\bigr)\,\babs{\upsigma\transp(x)\nabla h_r(x)}^2
\qquad\forall\,x\in B_r^c\,.
\end{aligned}
\end{equation}
Combining \cref{PP4.1A,PL4.2F}, and completing the squares, we have
\begin{equation}\label{PL4.2G}
\begin{aligned}
\widetilde\Lg^{\varphis}_{v_*}\bigl(\chi_t\comp h_r-\varphis\bigr)(x)
&\,\le\, \rc_v(x)-\rhos + c_1\,\kappa(r)\,\chi'_t\bigl(h_r(x)\bigr)\\
&\mspace{5mu}
+ \frac{1}{2}\, \chi_t''\bigl(h_r(x)\bigr)\,\babs{\upsigma\transp(x)\nabla h_r(x)}^2
+ \frac{1}{2}\babs{\upsigma\transp(x)\nabla \chi_t\bigl(h_r(x)\bigr)}^2\\
&\mspace{50mu} -\frac{1}{2}\babs{\upsigma\transp(x)
\bigl[\nabla\varphis(x) - \nabla \chi_t\bigl(h_r(x)\bigr)\bigr]}^2\,.
\end{aligned}
\end{equation}
Recall that $\chi'\le1$, and $\chi''\le1$.
Choose $r$ large enough so that $\varphis<-1$ on $B_r^c$.
It then follows by the definitions in \cref{PL4.2C,PL4.2D} that
$\varphis- \chi_t\comp h_r<0$ on $\partial B_r$ for all $t\ge0$.
Also, for each $t>0$, the difference $\varphis- \chi_t\comp h_r$ is negative
outside some compact set by the inf-compactness of $-\varphis$.
Note also that $\abs{\nabla h_r}\le \frac{\kappa(r)}{r}$ on $B_r^c$.
Hence \cref{E-growth,PL4.2D} imply that there exists $r_0$ such
the right-hand side of \cref{PL4.2G} is negative on $B_r^c$ for all
$r>r_0$ and all $t\ge0$.
An application of the strong maximum principle then shows
that $\varphis<h_r$ on $B_r^c$ for all $r>r_0$.

Now, note that
\begin{equation*}
\log \frac{\abs{x}}{\alpha(r)} \ge \frac{1}{2} \log\bigl(1+\abs{x}\bigr)
\qquad\text{when\ \ } \abs{x}\ge \max\,\bigl(1,2\bigl(\alpha(r)\bigr)^2\bigr)\,.
\end{equation*}
Since $\alpha(r)$ is strictly increasing, the inequality
\cref{EL4.2A} holds with
\begin{equation*}
\zeta(r) \df \frac{1}{2}\,\kappa\Bigl(\alpha^{-1}
\bigr(\sqrt{\nicefrac{r}{2}}\bigr)\Bigr)\qquad\text{for all\ }
r\ge 2\bigl(\alpha(r_0)\bigr)^2\,.
\end{equation*}
This completes the proof under \cref{A4.2}\,(a)\,.

The proof under part (b) of the assumption is similar.
The only difference is that here we use the fact that
$m_r\,\df\,\sup_{x\in B_r^c}\,\bigl(\Lg_{v_*} (\log\abs{x})\bigr)^-\to0$
as $t\to\infty$,
which is implied by \cref{E-weak}.
Thus with $\epsilon>0$ any constant such that $\rhos-\rc>\epsilon$ outside some
compact set, we choose $\kappa(r)$ as
\begin{equation*}
\kappa(r) \,\df\, \min\, \biggl(\sqrt r\,,\,\sup_{B_r^c\times\Act}\,
\frac{\epsilon}{2 \sqrt m_r}\,,\,
\inf_{B_r^c}\,\abs{\varphis}^{\nicefrac{1}{3}}\biggr)\,.
\end{equation*}
The rest is completely analogous to the analysis above.
This concludes the proof.
\end{proof}

The first part of the theorem which follows is quite technical,
but identifies a rather deep property of the ergodic occupation measures
of the operator $\sA$. It shows that under \cref{A4.1,A4.2}\,\textup{(a)} or \textup{(b)},
or \cref{A4.2}\,\textup{(c)}, if such a measure $\mu$ is feasible for the
maximization problem, or in other words, it satisfies
$\int_{\cZ} \sR(z)\,\mu(\D{z}) > -\infty$, then it necessarily has
``finite average'' entropy, that is $\int \cH\,\D\mu<\infty$, or equivalently,
it belongs in the class $\cPs(\cZ)$.
The proof uses the method of contradiction.
We first show that if such a measure $\mu$ is not in the class $\cPs(\cZ)$,
then the left hand side of  \cref{PP4.1F} grows at a geometric rate
as a function of $t$. Then we obtain a contradiction by
evaluating the right-hand side of \cref{PP4.1F}
using this geometric growth together
with the bound in \cref{L4.2}.

\begin{theorem}\label{T4.1}
\begin{enumerate}
\item[\ttup i]
Under \cref{A4.1,A4.2}\,\textup{(a)} or \textup{(b)},
or \cref{A4.2}\,\textup{(c)}, we have $\eom_{\sA}\cap\cPo(\cZ)\subset\cPs(\cZ)$.
This of course implies by \cref{P4.1} that
\begin{equation*}
\rhos \,=\, \max_{\mu\in\eom_{\sA}}\,\int_{\cZ} \sR(z)\,\mu(\D{z})
\,=\, \adjustlimits\sup_{\mu\in\cP(\cZ)} \inf_{g \in \Cc^2_c(\Rd)}\,F(g,\mu) \,.
\end{equation*}

\item[\ttup{ii}]
Let \cref{A4.1} hold, and suppose that
\begin{equation}\label{ET4.1C}
\sup_{x\in \Rd}\, \frac{\cH(x)}{1+\abs{\varphis(x)}} \,<\,\infty\,.
\end{equation}
Then
\begin{equation}\label{ET4.1D}
\rhos \,=\, \adjustlimits\inf_{g \in \Cc^2_c(\Rd)\,} \sup_{\mu\in\cP(\cZ)}\,F(g,\mu)\,.
\end{equation}
\end{enumerate}
\end{theorem}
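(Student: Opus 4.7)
The plan for part~(i) is to suppose $\mu\in\eom_{\sA}\cap\cPo(\cZ)$ and derive a contradiction from $\int\cH\,\D\mu=\infty$. I would start from the integral inequality \cref{PP4.1F}, exploit the elementary bound $\tfrac{1}{2}\babs{\upsigma\transp(y-\grad\varphis)}^2\ge \tfrac{1}{2}\cH - \tfrac{1}{2}\babs{\upsigma\transp y}^2$, and use that $\int\abs{\upsigma\transp y}^2\,\D\mu<\infty$ and $\int\abs{\rhos-\rc}\,\D\mu<\infty$ (both following from $\mu\in\cPo$ and the upper bound on $\rc$) to derive
\begin{equation*}
\tfrac{1}{2}\int\chi'_t(\varphis)\,\cH\,\D\mu \,\le\, \int\chi''_t(\varphis)\,\cH\,\D\eta + C,
\end{equation*}
where $\eta$ is the $x$-marginal of $\mu$ and $C$ is finite. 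Setting $\phi_t\df\int_{\{\varphis\ge -t\}}\cH\,\D\eta$, the properties of $\chi^{}_t$ from \cref{D4.1} translate this into $\phi_{t+2}\ge \tfrac{3}{2}\phi_t - C'$, so either $\phi_t$ stays bounded (yielding $\mu\in\cPs(\cZ)$) or grows geometrically in~$t$.

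To rule out the geometric case I would supply a sub-geometric upper bound on $\phi_t$ under each hypothesis set. Under \cref{A4.1} combined with \cref{A4.2}\,\textup{(a)} or \textup{(b)}, applying \cref{L4.1} to the linear equation satisfied by $\Phis$ (against a measurable selector from \cref{ET3.1A}) produces a polynomial bound $\cH(x)\le C_1(1+\abs{x})^k$, while \cref{L4.2} yields $\{\varphis\ge -t\}\subset B_{R(t)}$ with $R(t)$ of subexponential growth in~$t$; the two together bound $\phi_t$ subexponentially and contradict the geometric lower bound. Under \cref{A4.2}\,\textup{(c)} alone I would instead use $\cH\le\hat C_0(1+\abs{\varphis})(1+\abs{\rc})$ together with the fact that $\varphis$ is bounded above (since $\Phis$ is bounded, its reciprocal being inf-compact) to estimate $\int\chi'_t(\varphis)\cH\,\D\mu \le \hat C_0(t+O(1))\int(1+\abs{\rc})\,\D\mu$, a bound linear in~$t$. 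Either way $\int\cH\,\D\mu<\infty$, and the stated identities follow from \cref{P4.1}.

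For part~(ii), $\inf_g\sup_\mu F(g,\mu) \ge \rhos$ is immediate from part~(i). For the reverse I would use the identity $\sup_{\mu\in\cP(\cZ)}F(g,\mu) = \sup_{x\in\Rd} \cG \E^g(x)/\E^g(x)$ and exhibit scaled cutoffs $g_M(x)\df M\chi(\varphis(x)/M)+M$ with $\chi$ as in \cref{D4.1}. Inf-compactness of $\Phis^{-1}$ ensures $g_M\in\Cc^2_c(\Rd)$, and the rescaling reduces second-derivative terms by a factor $1/M$. A computation paralleling \cref{PP4.1E} yields, writing $u=\varphis(x)/M$,
\begin{equation*}
\frac{\cG \E^{g_M}(x)}{\E^{g_M}(x)}-\rhos \,\le\, \bigl(1-\chi'(u)\bigr)\bigl(\sup_\xi\rc(x,\xi)-\rhos\bigr) + \tfrac{1}{M}\chi''(u)\,\cH(x) - \chi'(u)\bigl(1-\chi'(u)\bigr)\,\cH(x).
\end{equation*}
Outside a large ball, \cref{A3.1}\,\textup{(iii)} makes the first term strictly negative, while on bounded sets $1-\chi'(u)\to 0$ uniformly as $M\to\infty$ since $\varphis$ is bounded there. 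The hypothesis $\cH\le M_\cH(1+\abs{\varphis})$ keeps $\cH=O(M)$ throughout the transition $\{-2M<\varphis<0\}$, so the positive term $\tfrac{1}{M}\chi''(u)\cH$ is only $O(1)$, whereas the absorbing term $-\chi'(u)(1-\chi'(u))\cH$ scales like $-M$ in the interior of the transition and dominates. The main obstacle will be the delicate boundary analysis near $u=0$, where all three terms vanish simultaneously at different rates; I would handle it by splitting the transition at the point where $\chi''(u)/M$ equals $\chi'(u)(1-\chi'(u))$ and leveraging the smoothness of $\chi$ at $u=0$ to show that the positive contribution remains $o(1)$ uniformly in~$x$. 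Combined with the reverse inequality, this gives $\rhos=\inf_g\sup_\mu F(g,\mu)$.
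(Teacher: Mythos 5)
Your proposal is correct. For part~(i) under \cref{A4.2}\,(a) or (b) it is essentially the paper's argument: the same contradiction between exponential growth of the truncated entropy integrals forced by \cref{PP4.1F} and the subexponential bound obtained from \cref{L4.1} (polynomial bound on $\cH$, using \cref{A4.1} and \cref{A3.1}\,(i)--(ii)) combined with \cref{L4.2} (which confines $\{\varphis\ge-t\}$ to a ball of subexponential radius); your bookkeeping is simpler, though, since the pointwise bound $\tfrac12\babs{\upsigma\transp(y-\grad\varphis)}^2\ge\tfrac12\cH-\tfrac12\abs{\upsigma\transp y}^2$ turns \cref{PP4.1F} directly into the recursion $\phi_{t+2}\ge\tfrac32\phi_t-C'$, where the paper goes through Cauchy--Schwarz, a liminf ratio argument, and shell sums (see \cref{AB1}). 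You genuinely depart from the paper in case (c) and in part~(ii). Under \cref{A4.2}\,(c) the paper redesigns the cutoff ($\Tilde\chi_t$ with $\Tilde\chi''_t(s)\le-\nicefrac{1}{s}$) so that the right-hand side of the analogue of \cref{PP4.1F} vanishes as $t\to\infty$; you instead reuse the recursion and rule out geometric growth via the linear-in-$t$ bound coming from $\cH\le\widehat C_0(1+\abs{\varphis})(1+\abs{\rc})$, the upper bound on $\varphis$, and $\int\abs{\rc}\,\D\mu<\infty$ --- equally valid, and it avoids a second cutoff construction. In part~(ii) the paper uses $\Hat\chi_t\comp\varphis$ with $\Hat\chi''_t(s)\le-\nicefrac{1}{s\log\abs{s}}$, so the positive term $\Hat\chi''_t(\varphis)\,\cH$ is $o(1)$ by \cref{ET4.1C} alone; your dilated cutoffs $g_M=M\chi(\varphis/M)+M$ instead let $-\chi'(1-\chi')\cH$ absorb the $O(1)$ term $\tfrac1M\chi''\cH$, and this works: wherever $\tfrac1M\chi''(u)\ge\chi'(u)\bigl(1-\chi'(u)\bigr)$, the Lipschitz continuity of $\chi''$ (which vanishes at $0$ and at $-2$) gives $1-\chi'(u)\ge(\chi''(u))^2/4L$ and $\chi'(u)\ge(\chi''(u))^2/4L$, forcing $\chi''(u)=O(M^{-\nicefrac13})$, so the positive contribution is $o(1)$ uniformly even after multiplying by $\cH\le C(1+2M)$. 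Two small points to record when writing it up: the endpoint $u=-2$ needs this splitting as much as $u=0$ (there $\chi'\to0$ degenerates the absorbing term while the bound on $\cH$ is of order $M$, although the strictly negative first term also helps there), and the lower bound $\inf_g\sup_\mu F\ge\rhos$ in part~(ii) uses that \cref{ET4.1C} implies \cref{A4.2}\,(c), so that part~(i) --- or simply \cref{P4.1} with weak duality --- applies.
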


\begin{proof}
We first prove part (i) under under \cref{A4.2}\,(a) or (b).
We argue by contradiction. Let $\mu\in\eom_{\sA}\cap\cPo(\cZ)$,
and suppose that $\mu\notin\cPs(\cZ)$.
As in the proof of \cref{P4.1} we let
$\eta(\cdot) = \int_{\Act\times\Rd}\mu(\cdot\,,\D{\xi},\D{y})$.
Let $\mathscr{I}_1(t)$ and $\mathscr{I}_2(t)$ denote the left and the right-hand
side of \cref{PP4.1F}, respectively, and define
\begin{equation*}
\mathcal{I}(t) \,\df\, \int_{\Rd}
\chi'_t\bigl(\varphis(x)\bigr)\,\cH(x)\,\eta(\D{x})\,.
\end{equation*}
Then of course $\mathcal{I}(t)\to\infty$ as $t\to\infty$ by the hypothesis.
Expanding $\mathscr{I}_1(t)$ we see that
\begin{equation*}
\mathscr{I}_1(t) \,=\, \mathcal{I}(t) - \int_{\cZ}\chi'_t\bigl(\varphi^*(x)\bigr)
\bigl\langle a(x) y,\grad\varphi^*(x)\bigr\rangle\, \D{\mu}
+ \int_{\cZ} \chi'_t(\varphi^*(x)) (\rc-\rhos)\,\D{\mu}\,.
\end{equation*}
Since $\int \sR\,\D\mu$ is finite, it follows that
$\int_{\cZ} \abs{\upsigma\transp y}^2 \D{\mu}$ and
$\int_{\cZ} \max\{-\rc,0\} \, \D{\mu}$ are also finite.
Moreover, the second assertion and the fact that $c$ is bounded above
imply that
$\int_{\cZ} |\rc| \, \D{\mu}<\infty$.
Thus, using the Cauchy--Schwarz inequality in the above display
and the fact $|\chi'_t|$ is bounded, we have
\begin{equation}\label{PT4.1I}
\alpha_0(t) - \alpha_1(t)\sqrt{\mathcal{I}(t)} + \mathcal{I}(t)
\,\le\,\mathscr{I}_1(t)
\,\le\,\alpha_0(t) + \alpha_1(t)\sqrt{\mathcal{I}(t)} + \mathcal{I}(t)
\end{equation}
for some constants $\alpha_0(t)$ and $\alpha_1(t)$ which are bounded
in $t\in[0,\infty)$.

First suppose that over some sequence $t_n\to\infty$ we have
$\frac{\mathscr{I}_2(t_n)}{\mathscr{I}_1(t_n)}\to\delta<1$ as $n\to\infty$.
This implies by \cref{PT4.1I} that
$\frac{\mathscr{I}_2(t_n)}{\mathcal{I}(t_n)}\to\delta$.
However, if this is the case, then the inequality
\begin{equation*}
\alpha_0(t_n) - \alpha_1(t_n)\sqrt{\mathcal{I}(t_n)} +
\Bigl(1- \tfrac{\mathscr{I}_2(t_n)}{\mathcal{I}(t_n)}\Bigr)\mathcal{I}(t_n)\,\le\,0\,,
\end{equation*}
which is implied by \cref{PP4.1F,PT4.1I},
contradicts the fact that $\mathcal{I}(t)\to\infty$ as $t\to\infty$.
Thus we must have $\liminf_{t\to\infty} \frac{\mathscr{I}_2(t)}{\mathscr{I}_1(t)}\ge1$,
and same applies to the fraction $\frac{\mathscr{I}_2(t)}{\mathcal{I}(t)}$.

Define
\begin{equation*}
g_k \,\df\, \int_{\Rd}\cH(x)\,
\Ind_{\{ -2k<\varphis(x)< -2k+2\}}\, \eta(\D{x})\,,\qquad
k\in\NN\,.
\end{equation*}
We have $\mathcal{I}(2n)\ge \sum_{k=1}^n g_k$ for $n\in\NN$,
by definition of these quantities.
Recall that $\mathscr{I}_2(t)$ is defined as the right-hand
side of \cref{PP4.1F}.
Note then that, since $\chi''<1$, we have
$\mathscr{I}_2(2n)< \delta g_{n+1}$ for some $\delta<1$.
Therefore, since $\liminf_{t\to\infty}\,\frac{\mathscr{I}_2(t)}{\mathcal{I}(t)}\ge1$,
there exists $n_0\in\NN$ such that
\begin{equation}\label{AB1}
S_n \,\df\,\sum_{k=1}^{n} g_k \,\le\, g_{n+1}\qquad\forall\,n\ge n_0\,.
\end{equation}
Thus $S_{n+1} - S_n = g_{n+1} \ge S_n$, which implies that $S_{n+1}\ge 2 S_n$.
This of course means that $S_n$ diverges at a geometric rate in $n$,
that is, $S_{n}\ge 2^{n-1} S_1$.
Let $h$ denote the inverse of the map $y\mapsto\zeta(y)\log(1+y)$.
Note that $\cH(x)\le C(1+\abs{x}^p)$ for
some positive constants $C$ and $p$ by \cref{L4.1}
and the hypothesis that $\rc$ has polynomial growth in \cref{A3.1}\,(ii).
Thus, by \cref{L4.2}, we obtain
\begin{align*}
g_n &\,\le\, C \int_{\Rd} (1+|x|^p)\,\Ind_{\{-2n<\varphis(x)< -2n+2\}}\,\eta(\D{x})\\
&\,\le\, C \int_{\Rd} (1+|x|^p)\,\Ind_{\{\zeta(|x|)\log(1+|x|)<2n\}}\, \eta(\D{x})\\
&\,\le\, C \bigl(1 + h(2n)^p\bigr)
\end{align*}
for all $n\in\NN$.
However, this implies from \cref{AB1} that
\begin{equation*}
\begin{aligned}
\log 2 \,\le\, \limsup_{n\to\infty}\,\frac{\log S_n}{n}
&\,\le\, C'\limsup_{n\to\infty}\,\frac{\log h(n)}{n}\\
&\,=\, C' \limsup_{k\to\infty}\,\frac{\log k}{ \zeta(k) \log(1+k)}\,=\,0
\end{aligned}
\end{equation*}
for some constant $C'$, and we reach a contradiction.
Therefore, $\eom_{\sA}\cap\cPo(\cZ)\subset\cPs(\cZ)$.

Moving on to the proof under \cref{A4.2}\,(c),
we replace the function $\chi^{}_t$ in \cref{D4.1} by
a function $\Tilde{\chi}^{}_t$ defined as follows.
For $t>0$, we let $\Tilde{\chi}^{}_t$ be a convex $\Cc^2(\RR)$ function such that
$\Tilde{\chi}^{}_t(s)= s$ for $s\ge -t$, and $\Tilde{\chi}^{}_t(s) =
\text{constant}$ for $s\le -t\E^2$.
Then $\Tilde{\chi}'_t$ and $\Tilde{\chi}''_t$ are nonnegative.
In addition, we select $\Tilde{\chi}^{}_t$ so that
$\Tilde{\chi}''_t(s) \le -\frac{1}{s}$ for
$s\in[-t\E^2,-t]$  and $t\ge0$.
This is always possible.
We follow the same analysis as in the proof of \cref{P4.1}, with the function
$\Tilde{\chi}^{}_t$ as chosen, and obtain
\begin{equation}\label{PT4.1J}
\begin{aligned}
\int_{\cZ}
\Tilde{\chi}'_t&\bigl(\varphis(x)\bigr)\Bigl(
\tfrac{1}{2} \babs{\upsigma\transp(x)\bigl(y-\grad\varphis(x)\bigr)}^2
+ \sR(x,\xi,y) - \rhos\Bigr)\,\mu(\D{x},\D{\xi},\D{y})\\
&\,\le\,\int_{\Rd} \Tilde{\chi}''_t\bigl(\varphis(x)\bigr)\,\cH(x)\,\eta(\D{x})\\
&\,\le\, \int_{\Rd}\frac{\cH(x)}{\abs{\varphis(x)}}\,\Ind_{A_t}(x) \,\eta(\D{x})\\
&\,\le\, \widehat{C}_0\,
\int_{\Rd\times\Act\times\Rd} \frac{1+\abs{\varphis(x)}}{\abs{\varphis(x)}}
\bigl(1+\abs{\rc(x,\xi)}\bigr)
\Ind_{A_t}(x) \,\mu(\D{x},\D{\xi},\D{y})\,,
\end{aligned}
\end{equation}
where $A_t \df \{x\colon \varphis(x)\le -t\}$.
The integral on the right-hand side of \cref{PT4.1J} vanishes as $t\to\infty$ by
the hypothesis that $\int \rc\,\D\mu>-\infty$,
so again we obtain \cref{PT4.1H} which implies the result.
This completes the proof of part (i).

We continue with part (ii).
We use a $\Cc^2$ convex function $\Hat\chi_t\colon\RR\to\RR$, for $t\ge1$,
satisfying $\Hat\chi_t(s)=s$ for $s\le -t$,
$\Hat\chi''_t(s)\le -\frac{1}{s\log \abs{s}}$
for $s<-t$, and $\Hat\chi_t(s)=\text{constant}$ for $s\ge \Hat\zeta(t)$,
for some $\Hat\zeta(t)<-t$.
We let $h_t(x) = \Hat\chi_t\bigl(\varphis(x)\bigr)$.
We may translate $\varphis$ so that it is smaller than $-1$ on $\Rd$.
By \eqref{PP4.1E}, we have
\begin{equation}\label{PT4.1L}
\begin{aligned}
\sA h_t(z) +\sR(z)-\rhos &\,\le\,
\bigl[1-\Hat\chi'_t\bigl(\varphis(x)\bigr)\bigr]
\bigl(\sR(z)-\rhos\bigr)\\
&\mspace{10mu}-\tfrac{1}{2} \Hat\chi'_t\bigl(\varphis(x)\bigr)
 \babs{\upsigma\transp(x)\bigl(y-\grad\varphis(x)\bigr)}^2
+ \Hat\chi''_t\bigl(\varphis(x)\bigr)\,\cH(x)\,.
\end{aligned}
\end{equation}
We claim that given any $\epsilon>0$ there exists $t>0$ such that
$F(h_t,\mu)\le \rhos+\epsilon$ for all $\mu\in\cP(\cZ)$.
This of course suffices to establish \cref{ET4.1D}.

By \cref{A3.1}\,(iii) there exists $t_1>0$ such that the first term on the
right-hand side of \cref{PT4.1L} is nonpositive for all $t\ge t_1$.
Also, using the definition of $\Hat\chi$, we have
\begin{equation*}
\Hat\chi''_t\bigl(\varphis(x)\bigr)\,\cH(x) \,\le\,
\frac{\cH(x)}{\abs{\varphis(x)} \log \abs{\varphis(x)}}\,
\Ind\{x\in\Rd\colon \varphis(x) \le -t\}\,\xrightarrow[t\to\infty]{}\,0
\end{equation*}
by the hypothesis, and
since $-\varphis$ is inf-compact by \cref{T3.1}.
This proves the claim, and completes the proof.
\end{proof}

There is a large class of problems which satisfy \cref{ET4.1C}.
It consists of equations with $\abs{b}^2+\abs{\rc}$ having at most linear growth
in $\abs{x}$ and $\abs{x}^{-1}\langle b,x\rangle^-$ growing no faster than
$\abs{\rc}^2$.
This fact is stated in the following lemma.

\begin{lemma}\label{L4.3}
Grant \cref{A4.1} and suppose that
\begin{equation*}
\sup_{(x,\xi)\in\Rd\times\Act}\, \max\;
\biggl(\frac{\langle b(x,\xi),x\rangle^-}{1+\abs{x}\abs{\rc(x,\xi)}},
\frac{\abs{b(x,\xi)}^2+\abs{\rc(x,\xi)}}{1+\abs{x}}\biggr)\,<\,\infty\,.
\end{equation*}
Then \cref{ET4.1C} holds.
\end{lemma}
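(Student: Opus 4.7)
The plan proceeds in two steps: an upper bound $\cH(x)\le C(1+\abs{x})$ from the gradient estimate of \cref{L4.1}, and a lower bound $\abs{\varphis(x)}\gtrsim\abs{x}$ for large $\abs{x}$ from an exponential Lyapunov barrier. Let $v_*$ be a measurable selector from the maximum in \cref{ET3.1A}, so that $\Phis$ solves the linear equation $\Lg_{v_*}\Phis + (\rc_{v_*} - \rhos)\Phis = 0$. Applying \cref{L4.1} with drift $b_{v_*}$ and potential $\rc_{v_*}-\rhos$, and using the bound $\abs{b(x,\xi)}^2+\abs{\rc(x,\xi)}\le C(1+\abs{x})$ implied by the hypothesis, one obtains
\begin{equation*}
\frac{\babs{\grad\Phis(x)}}{\Phis(x)} \,\le\, \widetilde{C}_0\Bigl[1 + \sup_{y\in B_1(x)}\bigl(\abs{b_{v_*}(y)} + \sqrt{\abs{\rc_{v_*}(y)-\rhos}}\bigr)\Bigr] \,\le\, C'\sqrt{1+\abs{x}}\,.
\end{equation*}
Since $a$ is bounded by \cref{A4.1} and $\grad\varphis = \grad\Phis/\Phis$, this gives $\cH(x)\le C''(1+\abs{x})$.

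For the lower bound I construct a supersolution $g_\alpha(x)\df \E^{-\alpha\phi(x)}$ with $\phi(x)\df(1+\abs{x}^2)^{\nicefrac{1}{2}}$ and $\alpha>0$ to be chosen small. A direct computation gives
\begin{equation*}
\frac{\cG g_\alpha(x)}{g_\alpha(x)} \,=\, -\tfrac{\alpha}{2}\trace\bigl(a(x)\nabla^2\phi(x)\bigr) + \tfrac{\alpha^2}{2}\babs{\upsigma\transp(x)\grad\phi(x)}^2 + \max_{\xi\in\Act}\,\bigl[-\alpha\bigl\langle b(x,\xi),\grad\phi(x)\bigr\rangle + \rc(x,\xi)\bigr]\,.
\end{equation*}
Boundedness of $a$ makes the first two terms $O(\alpha\abs{x}^{-1})+O(\alpha^2)$. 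Using $-\langle b,\grad\phi\rangle\le \langle b,x\rangle^-/\phi$ together with the hypothesis bound $\langle b,x\rangle^-\le C_1(1+\abs{x}\abs{\rc})$, one has
\begin{equation*}
-\alpha\bigl\langle b(x,\xi),\grad\phi(x)\bigr\rangle + \rc(x,\xi)\,\le\, \frac{C_1\alpha}{\phi(x)} + (1+C_1\alpha)\rc(x,\xi)^+ + (C_1\alpha-1)\rc(x,\xi)^-\,,
\end{equation*}
so for $C_1\alpha\le 1$ the $\rc^-$ term is nonpositive and the maximum over $\xi$ is at most $(1+C_1\alpha)\sup_{B_R^c\times\Act}\rc^+$. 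By \cref{A3.1}\,(iii), this last supremum is strictly below $\rhos$ for $R$ sufficiently large, and a suitable choice of small $\alpha$ then delivers $\cG g_\alpha\le\rhos g_\alpha$ on $B_R^c$.

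The comparison is standard. Since $\cG g_\alpha\ge\Lg_{v_*}g_\alpha+\rc_{v_*}g_\alpha$, the supersolution property yields $\Lg_{v_*}g_\alpha + (\rc_{v_*}-\rhos)g_\alpha\le 0$ on $B_R^c$, while $\Lg_{v_*}\Phis + (\rc_{v_*}-\rhos)\Phis = 0$. Choose $M$ with $Mg_\alpha\ge\Phis$ on $\partial B_R$. Both $\Phis$ (which vanishes at infinity by \cref{T3.1}\,(a)) and $g_\alpha$ vanish at infinity, and $\rc_{v_*}-\rhos<0$ on $B_R^c$ by \cref{A3.1}\,(iii), so applying the strong maximum principle to $w\df Mg_\alpha-\Phis$ on $B_R^c$ yields $w\ge 0$ there. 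Hence $\varphis(x)\le\log M-\alpha\phi(x)$, which gives $\abs{\varphis(x)}\ge \alpha\abs{x}/2$ for $\abs{x}$ sufficiently large. Combining with Step~1 yields \cref{ET4.1C}, since on compact sets the ratio $\cH/(1+\abs{\varphis})$ is automatically bounded.

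The main obstacle is the Lyapunov computation in Step~2: the hypothesis bound $\langle b,x\rangle^-\le C_1(1+\abs{x}\abs{\rc})$ couples the drift to $\abs{\rc}$, and it is precisely this coupling that allows the potentially large term $\alpha\langle b,x\rangle^-/\phi$ to be absorbed against $\alpha\abs{\rc}$ without destroying the margin $\rhos-\sup_{B_R^c\times\Act}\rc>0$ supplied by \cref{A3.1}\,(iii).
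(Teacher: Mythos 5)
Your overall architecture is sound and in fact parallels the paper's proof: the upper bound $\cH(x)\le C(1+\abs{x})$ via \cref{L4.1} applied to $\Lg_{v_*}\Phis+(\rc_{v_*}-\rhos)\Phis=0$ is exactly the paper's first step, and your exponential barrier $\E^{-\alpha\phi}$ compared against $\Phis$ is the same idea the paper carries out on the logarithmic scale, comparing $\varphis$ with $\chi_t\bigl(\epsilon(\Tilde{r}-\abs{x})\bigr)$ under the twisted operator $\widetilde\Lg^{\varphis}_{v_*}$. The comparison step itself is fine ($\rc_{v_*}-\rhos<0$ outside a ball, both functions vanish at infinity, zeroth-order coefficient negative).

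However, there is a genuine error in the barrier computation. You discard the nonpositive term $(C_1\alpha-1)\rc^-$ and then claim that $(1+C_1\alpha)\sup_{B_R^c\times\Act}\rc^+$ is strictly below $\rhos$ ``by \cref{A3.1}\,(iii)''. \Cref{A3.1}\,(iii) controls $\sup\rc$, not $\sup\rc^+$, and nothing in the hypotheses of \cref{L4.3} forces $\rhos>0$: for instance $\rc\le-1$ with $-\rc$ inf-compact and $\abs{\rc}\le C(1+\abs{x})$ is admissible, and then $\rhos\le-1$ while $\rc^+\equiv0$, so $\sup\rc^+=0>\rhos$. In that case your estimate only yields $\cG g_\alpha/g_\alpha\le O(\alpha)$ on $B_R^c$, which is not $\le\rhos$, and the supersolution property fails, so the whole lower bound on $\abs{\varphis}$ collapses precisely in the regime where it is needed. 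The repair is easy but must be made: do not throw away the negative part of $\rc$. Since $\rc\le\rhos-\delta$ on $B_R^c\times\Act$ by \cref{A3.1}\,(iii), treat the two cases $\rc\ge0$ and $\rc<0$ separately to get $\rc+C_1\alpha\abs{\rc}\le\rhos-\delta+C_1\alpha\,\abs{\rhos-\delta}$ there (in the first case $\rc+C_1\alpha\abs{\rc}=(1+C_1\alpha)\rc$, in the second it equals $(1-C_1\alpha)\rc\le(1-C_1\alpha)(\rhos-\delta)$ for $C_1\alpha<1$); then choosing $\alpha$ small enough relative to $\delta$, $C_1$ and $\abs{\rhos-\delta}$ preserves the margin and gives $\cG g_\alpha\le\rhos g_\alpha$ on $B_R^c$, after which the rest of your argument goes through. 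This is in effect what the paper does by working with $\rc-\rhos$ directly rather than with $\rc^+$.
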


\begin{proof}
We use the function $\chi_t$ in \cref{D4.1}.
Let $\Tilde{r}>0$ be such that $\rhos-\rc(x,\xi)>\delta>0$
on $B_{\Tilde{r}}^c\times\Act$.
Note that there exists a constant $C$ such that
\begin{equation*}
\widetilde\Lg^{\varphis}_{v_*} \chi_t \bigl(\epsilon (\Tilde{r}-\abs{x})\bigr)
\,\le\, C\epsilon\bigl(1 + \abs{x}^{-1}\langle b_{v_*}(x),x\rangle^-
+ \abs{\nabla\varphis(x)}\bigr)
\quad \forall\, t>0\,.
\end{equation*}
Thus for some $\epsilon>0$ small enough, using \cref{PP4.1A}, we obtain
\begin{equation*}
\widetilde\Lg^{\varphis}_{v_*}\bigl(\varphis(x)
-\chi_t \bigl(\epsilon (\Tilde{r}-\abs{x})\bigr)\bigr) \,>\,0
\qquad\forall\, x\in B_{\Tilde{r}}^c\,,\ \ \forall\, t>0\,.
\end{equation*}
An application of the strong maximum principle then shows that
$\varphis(x)\le \epsilon (\Tilde{r}-\abs{x})^-$.
Therefore, using \cref{L4.1}, we obtain
\begin{equation*}
\babs{\grad\varphis(x)}^2 \,\le\,
C' (1+\abs{x}) \,\le\, C'\bigl(1+\Tilde{r}-\epsilon^{-1}\varphis(x)\bigr)
\qquad \forall\, x\in B_{\Tilde{r}}^c\,,
\end{equation*}
for some constant $C'$.
\end{proof}

We next present the variational formula over functions in
$\Cc^2(\Rd)$ whose derivatives up to second order have at most polynomial growth
in $\abs{x}$.
Let $\Cc_{\mathsf{pol}}^2(\Rd)$ denote this space of functions.

\begin{theorem}\label{T4.2}
Under \cref{A3.1} alone, we have
\begin{equation}\label{ET4.2A}
\rhos \,= \, \adjustlimits\inf_{g \in \Cc^2(\Rd)}
\sup_{\mu\in\cP(\cZ)}\,F(g,\mu)\,.
\end{equation}
Under \cref{A4.1,A4.2}\,\textup{(a)} or \textup{(b)}, we have
\begin{equation}\label{ET4.2B}
\rhos \,= \, \adjustlimits\inf_{g \in \Cc_{\mathsf{pol}}^2(\Rd)}
\sup_{\mu\in\cP(\cZ)}\,F(g,\mu)
\,=\, \adjustlimits\sup_{\mu\in\cP(\cZ)}
\inf_{g \in \Cc_{\mathsf{pol}}^2(\Rd)}\, F(g,\mu)\,.
\end{equation}
\end{theorem}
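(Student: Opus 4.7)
The plan is to derive \cref{ET4.2A} directly from the eigenvalue characterization $\lambda_*=\rhos$ of \cref{T3.1}, and then to reduce the two statements of \cref{ET4.2B} to this fact via a suitable truncation of $\varphis=\log\Phis$ together with an invariant-measure identity.

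For \cref{ET4.2A}, the key observation is that for any $g\in\Cc^2(\Rd)$, setting $\phi\df\E^g$, completing the square in $y$, and maximizing over $\xi$ yields
$\sup_{(\xi,y)\in\Act\times\Rd}\bigl(\sA g+\sR\bigr)(x,\xi,y)=\cG\phi(x)/\phi(x)$
for every $x\in\Rd$. Hence $\sup_{\mu\in\cP(\cZ)} F(g,\mu)=\sup_{x\in\Rd}\cG\phi/\phi$. By the definition \cref{E-princ2} of $\lambda_*$ combined with \cref{T3.1}\,(c), this quantity is bounded below by $\lambda_*=\rhos$, which gives the lower bound. The matching upper bound is immediate upon taking $g=\varphis$, since $\cG\Phis=\rhos\Phis$ by \cref{T3.1}.

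For the first equality in \cref{ET4.2B}, the lower bound is inherited from \cref{ET4.2A} via the inclusion $\Cc_{\mathsf{pol}}^2(\Rd)\subset\Cc^2(\Rd)$. For the upper bound I would truncate $\varphis$ from below: pick a smooth concave cut-off $\Hat\chi_t\colon\RR\to\RR$ with $\Hat\chi_t(s)=s$ for $s\ge -t$, $\Hat\chi_t(s)=-t-1$ for $s\le -t-2$, $\Hat\chi'_t\in[0,1]$, and $\Hat\chi''_t\le0$, and set $g_t\df\Hat\chi_t\comp\varphis$. Since $\Phis^{-1}$ is inf-compact by \cref{T3.1}\,(a), the set $\{\varphis\ge -t-2\}$ is bounded, so $g_t\in\Cc^2_b(\Rd)\subset\Cc_{\mathsf{pol}}^2(\Rd)$. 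A chain-rule calculation yields
\begin{equation*}
\sA g_t+\sR \,=\, \Hat\chi'_t(\varphis)\bigl(\sA\varphis+\sR\bigr)
+ \tfrac12\,\Hat\chi''_t(\varphis)\,\babs{\upsigma\transp\grad\varphis}^2
+ \bigl(1-\Hat\chi'_t(\varphis)\bigr)\sR\,.
\end{equation*}
Using \cref{PP4.1D} (which gives $\sA\varphis+\sR\le\rhos$), the concavity $\Hat\chi''_t\le0$, the identity $\sup_y\sR(x,\xi,y)=\rc(x,\xi)$, and the fact that for $t$ large the set $\{\varphis\le-t\}$ lies in the region where \cref{EA3.1A} forces $\rc<\rhos$, one concludes $\sup_\mu F(g_t,\mu)\le\rhos$.

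The second equality of \cref{ET4.2B} follows in the easy direction from weak duality. For the reverse, I would evaluate at the measure $\mu_*\df\eta_{v_*}(\D{x})\,\delta_{v_*(x)}(\D{\xi})\,\delta_{\grad\varphis(x)}(\D{y})$ and use the decomposition $F(g,\mu_*)=\int\widetilde\Lg^{\varphis}_{v_*}g\,\D\eta_{v_*}+\rhos$, the constant term coming from \cref{PP4.1B}. The central step, and the main obstacle, is to show that $\int\widetilde\Lg^{\varphis}_{v_*}g\,\D\eta_{v_*}=0$ for every $g\in\Cc_{\mathsf{pol}}^2(\Rd)$; this is not automatic from the relation $\mu_*\in\eom_\sA$, which only tests against $\Cc^2_c$ functions. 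The strategy is to leverage \cref{L4.2}: since $\zeta(r)\to\infty$, the Foster--Lyapunov function $\Phis^{-1}$ appearing in \cref{ET3.1B} grows super-polynomially, which forces $\eta_{v_*}$ to admit moments of every polynomial order. Combined with \cref{L4.1} (which bounds $\grad\varphis$ by a polynomial) and \cref{A4.1} (which bounds $a$), this implies $\widetilde\Lg^{\varphis}_{v_*}g\in L^1(\eta_{v_*})$ for every $g\in\Cc_{\mathsf{pol}}^2$; a standard It\^o-plus-truncation argument --- applying It\^o's formula to $g\cdot\chi_R(\widetilde{X}_t)$ for a compactly supported cut-off $\chi_R$ with $\widetilde{X}_0\sim\eta_{v_*}$, using stationarity, and passing to the limit $R\to\infty$ by dominated convergence --- then yields the desired vanishing integral and completes the proof.
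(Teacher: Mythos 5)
Your treatment of \cref{ET4.2A} and of the second equality in \cref{ET4.2B} is essentially the paper's own argument: the Collatz--Wielandt lower bound $\sup_{\mu}F(g,\mu)=\sup_{x}\cG\E^{g}/\E^{g}\ge\lambda_*=\rhos$ via \cref{E-princ2}, the choice $g=\varphis$ for the upper bound, and, for the $\sup\inf$ side, weak duality plus evaluation at $\mu_*=\eta_{v_*}\otimes\delta_{v_*}\otimes\delta_{\grad\varphis}$, where the polynomial moments of $\eta_{v_*}$ (from the super-polynomial growth of the Lyapunov function $\Phis^{-1}$ given by \cref{L4.2} and \cref{ET3.1B}) together with \cref{L4.1} justify $\int\widetilde\Lg^{\varphis}_{v_*}g\,\D\eta_{v_*}=0$ for $g\in\Cc^2_{\mathsf{pol}}(\Rd)$ by a cut-off argument. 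That part is sound and is exactly \cref{PT4.2A,PT4.2B,PT4.2C} in the paper.

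The gap is in your proof of the first equality of \cref{ET4.2B}. The cut-off you describe cannot exist: a $\Cc^2$ function with $\Hat\chi_t(s)=s$ for $s\ge -t$ and $\Hat\chi_t$ constant for $s\le -t-2$ satisfies $\int_{-t-2}^{-t}\Hat\chi''_t(s)\,\D{s}=1>0$, so it cannot be concave --- truncation of $\varphis$ \emph{from below} (which is what you need, since $\varphis\to-\infty$) is necessarily convex on the transition zone. Consequently the middle term in your identity is $\tfrac12\Hat\chi''_t(\varphis)\babs{\upsigma\transp\grad\varphis}^2=\Hat\chi''_t(\varphis)\,\cH\ge 0$, and since $\sup_{\mu}F(g_t,\mu)$ is the \emph{pointwise} supremum of $\sA g_t+\sR$, you must bound this term uniformly on $\{-t-2\le\varphis\le -t\}$. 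That is precisely the difficulty that \cref{T4.1}\,(ii) addresses, and the paper can only handle it under the extra hypothesis \cref{ET4.1C} (boundedness of $\cH/(1+\abs{\varphis})$), which is \emph{not} assumed in \cref{T4.2}. Under \cref{A4.1,A4.2}\,(a) or (b) alone, $\cH$ may grow polynomially in $\abs{x}$ while $\abs{\varphis}$ only grows like $\zeta(\abs{x})\log(1+\abs{x})$, so the transition region lies at radii enormous in $t$ and the error term need not be small; your argument, if it worked, would prove the stronger statement of \cref{T4.1}\,(ii) without \cref{ET4.1C}. The repair is the paper's route, which needs no truncation at all: under \cref{A4.1}, \cref{L4.1} together with \cref{A3.1}\,(i)--(ii) shows that $\grad\varphis$ (and, via the equation and interior elliptic estimates, the second derivatives) have polynomial growth, so $\varphis\in\Cc^2_{\mathsf{pol}}(\Rd)$ and one may take $g=\varphis$ directly; the lower bound is inherited from \cref{ET4.2A} by the inclusion $\Cc^2_{\mathsf{pol}}(\Rd)\subset\Cc^2(\Rd)$, as you note.
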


\begin{proof}
By \cref{E-eigen4,ET3.1A} we have
\begin{equation*}
\adjustlimits\max_{\xi\in\Act\,}\max_{y\in\Rd}\;
\bigl[\sA \varphis (x,\xi,y) + \sR(x,\xi,y)\bigr]\,=\, \rhos\,.
\end{equation*}
Since $\varphis\in\Cc^2(\Rd)$, this implies that
\begin{equation*}
\adjustlimits\inf_{g \in \Cc^2(\Rd)}
\sup_{\mu\in\cP(\cZ)}\,F(g,\mu) \,\le\, \rhos\,.
\end{equation*}
On the other hand, by \cref{T3.1}\,(d), it follows that for any 
$g\in \Cc^2(\Rd)$ we have
\begin{equation*}
\sup_{z\in\cZ}\, \bigl[\sA g(z) + \sR(z)\bigr] \,\ge\, \rhos\,,
\end{equation*}
which then implies the converse inequality
\begin{equation*}
\adjustlimits\inf_{g \in \Cc^2(\Rd)}
\sup_{\mu\in\cP(\cZ)}\,F(g,\mu) \,\ge\, \rhos\,.
\end{equation*}
This proves \cref{ET4.2A}.

Concerning \cref{ET4.2B}, the first equality follows as in the preceding
paragraph since $\varphis\in\Cc_{\mathsf{pol}}^2(\Rd)$ by
Assumptions \ref{A3.1}\,(i)--(ii) and \ref{A4.1}, and \cref{L4.1}.
Turning now our attention to the second equality in \cref{ET4.2B},
recall from the proof of \cref{P4.1}
that $\eta_{v_*}$ denotes the invariant probability measure of
$\widetilde\Lg_{v_*}^{\varphis}$.
Under \cref{A4.2}\,(a) or (b), \cref{L4.2} shows
that $\Phis^{-1}(x)$ grows faster in $\abs{x}$ than any polynomial.
Therefore, $\int_\Rd \abs{x}^n\, \eta_{v_*}(\D{x})<\infty$
for all $n\in\NN$ by \cref{ET3.1B}.
Since $\abs{\nabla\varphis(x)}$ has at most polynomial growth,
and $b$ has at most linear growth, we obtain
\begin{equation}\label{PT4.2A}
\int_{\Rd}\babs{\widetilde\Lg_{v_*}^{\varphis} f(x)}\,\eta_{v_*}(\D{x})\,<\,\infty
\qquad\forall\,f\in\Cc_{\mathsf{pol}}^2(\Rd)\,.
\end{equation}
Continuing, if \cref{PT4.2A} holds, then
it is standard to show by employing a cut-off function, that
\begin{equation}\label{PT4.2B}
\int_{\Rd}\widetilde\Lg_{v_*}^{\varphis} f(x)\,\eta_{v_*}(\D{x})\,=\,0
\qquad\forall\, f\in\Cc_{\mathsf{pol}}^2(\Rd)\,.
\end{equation}
Let $\mu_*\in\eom_\sA$ denote the ergodic occupation measure corresponding
to $\eta_{v_*}$, that is,
\begin{equation*}
\mu_*(\D{x},\D{\xi},\D{y})\,=\,\eta_{v_*}(\D{x})\,\delta_{v_*(x)}(\D{\xi})\,
\delta_{\nabla\varphis}(\D{y})\,.
\end{equation*}
\Cref{PT4.2B} implies that
\begin{equation}\label{PT4.2C}
F(g,\mu_*) \,=\, \,\int_{\cZ} \sR(z)\,\mu_*(\D{z})
\,=\, \rhos\qquad\forall\,g \in \Cc_{\mathsf{pol}}^2(\Rd)\,.
\end{equation}
Since
\begin{equation*}
\adjustlimits\sup_{\mu\in\cP(\cZ)}\inf_{g \in \Cc_{\mathsf{pol}}^2(\Rd)}\;
F(g,\mu)\,\le\,
\adjustlimits\inf_{g \in \Cc_{\mathsf{pol}}^2(\Rd)} \sup_{\mu\in\cP(\cZ)}\,F(g,\mu)\,,
\end{equation*}
the second equality in \cref{ET4.2B} then follows by \cref{ET4.2A,PT4.2C}.
\end{proof}

\section{The risk-sensitive cost minimization problem}\label{S5}

Using \cref{L4.1}, we can improve the main result in
\cite{AB-18} which assumes bounded drift and running cost.

We say that a function $f\colon\cX\to\RR$ defined on a locally compact space
is \emph{coercive, or near-monotone,
relative to a constant $\beta\in\RR$} if there exists a compact set $K$ such
that $\inf_{K^c}\,f >\beta$.
Recall that an admissible control $\xi$ for \cref{E-sde1} is a process
$\xi_t(\omega)$ which takes values in $\Act$, is jointly measurable in
$(t,\omega)\in[0,\infty)\times\Omega$, and is
non-anticipative, that is,
for $s < t$, $W_{t} - W_{s}$ is independent of $\sF_{s}$ given in \cref{E-sF}.
We let $\Uadm$ denote the class of admissible controls, and $\Exp^x_\xi$ the
 expectation operator on the canonical space of the
process under the control $\xi\in\Uadm$, conditioned on the
process $X$ starting from $x\in\RR^{d}$ at $t=0$.

Let $c\colon\Rd\times\Act\to\RR$ be continuous, and
Lipschitz continuous in its first argument
uniformly with respect to the second.
We define the risk-sensitive penalty by
\begin{equation*}
\sE^x_\xi\,=\, \sE^x_\xi(c)\,\df\, \limsup_{T\to\infty}\;\frac{1}{T}\,
\log\Exp^x_\xi\Bigl[\E^{\int_{0}^{T} c(X_{t},\xi_t)\,\D{t}}\Bigr]\,,
\quad \xi\in\Uadm\,,
\end{equation*}
and the risk-sensitive optimal values by
$\sE^x_* \df \inf_{\xi\in\,\Uadm}\,\sE^x_\xi$, and
$\sE_* \df \inf_{x\in\,\Rd}\,\sE^x_*$.
Let
\begin{equation*}
\widehat\cG f(x) \,\df\, \frac{1}{2}\trace\left(a(x)\nabla^{2}f(x)\right)
+ \min_{\xi\in\Act}\, \bigl[\bigl\langle b(x,\xi),
\grad f(x)\bigr\rangle + c(x,\xi) f(x)\bigr]\,,\quad f\in\Cc^2(\Rd)\,,
\end{equation*}
and
\begin{equation*}
\widehat\lambda_*\,=\,\widehat\lambda_*(c)\,\df\,\inf\,\Bigl\{\lambda\in\RR\,
\colon \exists\, \varphi\in\Sobl^{2,d}(\Rd),\ \varphi>0, \
\widehat\cG\varphi -\lambda\varphi\le 0 \text{\ a.e.\ in\ } \Rd\Bigr\}\,.
\end{equation*}
We say that $\widehat\lambda_*$ is \emph{strictly monotone at $c$ on the right}
if $\widehat\lambda_*(c+h)>\widehat\lambda_*(c)$ for all non-trivial nonnegative
functions $h$ with compact support.

\Cref{E-Prop} below improves \cite[Proposition~1.1]{AB-18}.
We first state the assumptions.

\begin{assumption}\label{A5.1}
In addition to \cref{A4.1} we require the following.
\begin{enumerate}
\item[\ttup i]
The drift $b$ and running cost $c$ satisfy, for some $\theta\in[0,1)$
and a constant $\kappa_0$, the bound
\begin{equation*}
\abs{b(x,\xi)} \,\le\, \kappa_0\bigl(1+\abs{x}^\theta\bigr)\,,\quad\text{and\ \ }
\abs{c(x,\xi)} \,\le\, \kappa_0\bigl(1+\abs{x}^{2\theta}\bigr)
\end{equation*}
for all $(x,\xi)\in\Rd\times\Act$.

\item[\ttup{ii}]
The drift $b$ satisfies
\begin{equation}\label{EA5.1A}
\frac{1}{\abs{x}^{1-\theta}}\;
\max_{\xi\in\Act}\;\bigl\langle b(x,\xi),\, x\bigr\rangle^{+}
\;\xrightarrow[\abs{x}\to\infty]{}\;0\,.
\end{equation}
\end{enumerate}
\end{assumption}

\begin{proposition}\label{E-Prop}
Grant \cref{A5.1}, and suppose that $c$ is coercive relative to $\sE_*$.
Then the HJB equation
\begin{equation}\label{E1-HJB}
\min_{\xi\in\Act}\;
\bigl[\Lg_\xi \Vst(x) + c(x,\xi)\,\Vst(x)\bigr] \,=\,  \sE_*\,\Vst(x)
\qquad\forall\,x\in\Rd
\end{equation}
has a solution $\Vst\in\Cc^{2}(\RR^{d})$,
satisfying $\inf_{\Rd}\,\Vst>0$, and the following hold:
\begin{enumerate}
\item[\ttup a]
$\sE^x_*=\sE_*=\widehat\lambda_*$ for all $x\in\Rd$.

\item[\ttup b]
Any $v\in\Usm$ that satisfies
\begin{equation}\label{EP1.1A}
\Lg_v \Vst(x) + c\bigl(x,v(x)\bigr)\,\Vst(x)\,=\, 
\min_{\xi\in\Act}\; \bigl[\Lg_\xi \Vst(x) + c(x,\xi)\,\Vst(x)\bigr]
\end{equation}
a.e.\ $x\in\Rd$, is stable, and is optimal, that is, $\sE^v_x=\sE_*$ for all $x\in\Rd$.

\item[\ttup c]
It holds that
\begin{equation*}
\Vst(x) \,=\, \Exp^x_v\Bigl[\E^{\int_{0}^{T}
[c(X_{t},v(X_{t}))-\sE_*]\,\D{t}}\,\Vst(X_T)\Bigr]
\qquad\forall\, (T,x)\in\RR_+\times\Rd\,,
\end{equation*}
for any $v\in\Usm$ that satisfies \cref{EP1.1A}.

\item[\ttup d]
If $\widehat\lambda_*$ is strictly monotone at $c$ on the right,
then there exists a unique positive
solution to \cref{E1-HJB}, up to a multiplicative constant,
and any optimal $v\in\Usm$ satisfies \cref{EP1.1A}.
\end{enumerate}
\end{proposition}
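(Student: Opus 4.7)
The plan is to adapt the argument from \cite[Proposition~1.1]{AB-18}, which establishes the same conclusions under bounded drift and cost, and to replace each use of boundedness with the gradient estimate in \cref{L4.1}. I construct $\Vst$ as a limit of Dirichlet principal eigenpairs $(\Vst_n,\widehat\lambda_n)$ of $\widehat\cG$ on $B_n$, normalized by $\Vst_n(0)=1$. Monotonicity of Dirichlet eigenvalues in the domain together with coercivity of $c$ relative to $\sE_*$ give $\widehat\lambda_n\nearrow\widehat\lambda_\infty\le\sE_*$.

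To pass to the limit, I would select a measurable minimizing selector $v_n$ from $\widehat\cG\Vst_n$ and view the equation as the linear problem $\Lg_{v_n}\Vst_n+(c_{v_n}-\widehat\lambda_n)\Vst_n=0$ on $B_n$. Since \cref{A4.1} holds and the growth bounds in \cref{A5.1}\,(i) make $\abs{b}+\sqrt{\abs{c}}$ of order $1+\abs{x}^\theta$, \cref{L4.1} yields
\begin{equation*}
\frac{\bigl\lvert\grad\Vst_n(x)\bigr\rvert}{\Vst_n(x)}\,\le\,C\bigl(1+\abs{x}^\theta\bigr)
\end{equation*}
uniformly in $n$, for $\abs{x}+2\le n$. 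Combined with the Harnack inequality and standard interior Sobolev a priori estimates for linear elliptic equations with measurable coefficients, this yields local compactness of $\{\Vst_n\}$ in $\Sobl^{2,p}(\Rd)$, from which a diagonal subsequence converges to $\Vst\in\Cc^2(\Rd)$ satisfying $\widehat\cG\Vst=\widehat\lambda_\infty\Vst$; the gradient estimate passes to the limit, so $\abs{\grad\log\Vst(x)}\le C(1+\abs{x}^\theta)$ on all of $\Rd$.

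Given $\Vst$, the remaining properties follow along the lines of \cite{AB-18}. The stochastic representation in (c) is established by applying It\^o's formula to $\Vst$ along the process controlled by an a.e.\ minimizing selector $v$ in \cref{EP1.1A}, localizing with $T\wedge\uptau_n$, and passing to the limit via Fatou; the polynomial growth hypotheses in \cref{A5.1}\,(i) together with the gradient estimate guarantee uniform integrability of the localized expressions. The near-monotonicity of $c$ and the identity $\widetilde\Lg^{-\log\Vst}_v\Vst^{-1}=(\sE_*-c_v)\Vst^{-1}$, combined with the mean-reverting condition \cref{EA5.1A} which dominates the $O(\abs{x}^\theta)$ correction coming from $a\grad\log\Vst$, show that the twisted process is positive recurrent, yielding (b); the equality $\sE^v_x=\sE_*=\widehat\lambda_*$ in (a) then follows from (c) because $\Vst$ is bounded above and below, and (d) is standard under strict monotonicity via the strong maximum principle applied to ratios of positive eigenfunctions.

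The hard part will be securing the lower bound $\inf_{\Rd}\Vst>0$, which is essential both for (c) and for closing the stability argument in (b). Direct integration of the gradient estimate along rays yields only $\Vst(x)\ge\E^{-C\abs{x}^{1+\theta}}$, which is insufficient. To upgrade this, I would use the identity $\widetilde\Lg^{-\log\Vst}_v\Vst^{-1}=(\sE_*-c_v)\Vst^{-1}$ and the coercivity of $c$ to deduce that $\Vst^{-1}$ is a classical supersolution outside some compact set; combined with the upper bound $\Vst^{-1}(x)\le\E^{C\abs{x}^{1+\theta}}$ and positive recurrence of the twisted process, a weighted comparison argument should then force $\Vst^{-1}$ to be globally bounded, giving the desired lower bound.
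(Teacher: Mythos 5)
Your construction of $\Vst$ by Dirichlet approximation, the Harnack/Sobolev compactness, and the use of \cref{L4.1} to get $\abs{\grad\log\Vst}\le C(1+\abs{x}^\theta)$ are all consistent with the source the paper itself leans on, but the two steps your argument actually turns on are flawed. First, the route you propose for the crucial lower bound $\inf_{\Rd}\Vst>0$ rests on an incorrect identity: with $\psi=\log\Vst$, the computation in \cref{E-eigenT} gives $\widetilde\Lg^{\psi}_v\Vst^{-1}=(c_v-\sE_*)\Vst^{-1}$, and coercivity of $c$ relative to $\sE_*$ makes the right-hand side \emph{positive} outside a compact set, so $\Vst^{-1}$ is a subsolution, not a supersolution, of the twisted operator there; the proposed weighted comparison argument therefore never gets started, and it is in any case circular, since the recurrence of the twisted process you want to feed into it is exactly what is in question. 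In the paper the lower bound is not obtained this way at all: it comes out of \cite[Theorem~3.4]{AB-18} (whose proof uses the coercivity of $c$ at the level of the approximating problems), and recurrence of a minimizing selector is then a \emph{consequence} of $\inf_{\Rd}\Vst>0$, not an ingredient for it.

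Second, you misread the role of \cref{EA5.1A}. That condition only forces the outward radial component $\langle b(x,\xi),x\rangle^{+}$ to be $\sorder(\abs{x}^{1-\theta})$; it supplies no inward push, while the twist $a\grad\log\Vst$ can contribute an outward radial term of order $\abs{x}^{1+\theta}$, so your claim that \cref{EA5.1A} ``dominates the $O(\abs{x}^\theta)$ correction'' and yields positive recurrence of the twisted diffusion does not follow. What the paper actually extracts from \cref{EA5.1A} --- and what your sketch is missing entirely --- is the moment estimate $\limsup_{T\to\infty}\frac{1}{T}\,\Exp^x_\xi\bigl[\abs{X_T}^{1+\theta}\bigr]=0$ for \emph{every} admissible control, obtained by applying It\^o's formula to $f(x)=\abs{x}^{2+2\theta}$; since $\abs{\log\Vst(x)}$ grows at most like $\abs{x}^{1+\theta}$ by \cref{L4.1}, this is precisely what makes the verification arguments of \cite{AB-18} (parts (a)--(c)) go through with unbounded data. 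In particular, your claim that (a) follows from (c) ``because $\Vst$ is bounded above and below'' is unsupported: nothing bounds $\Vst$ from above (it may grow like $\E^{C\abs{x}^{1+\theta}}$), and without the moment estimate the terms $\Exp^x_\xi\bigl[\E^{\int_0^{T\wedge\uptau_n}(c-\sE_*)\,\D{s}}\,\Vst(X_{T\wedge\uptau_n})\bigr]$ cannot be controlled. The paper's proof is essentially a reduction: prove this moment bound and then invoke \cite[Theorems~1.2, 1.5, 3.4 and Lemma~3.6]{AB-18} with \cref{L4.1} replacing boundedness of $b$ and $c$; your self-contained reconstruction would need to supply exactly those two missing ingredients.
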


\begin{proof}
A modification of \cite[Lemma~3.2]{AB-18}
(e.g., applying It\^{o}'s formula to the function $f(x)= \abs{x}^{2+2\theta}$)
shows that \cref{EA5.1A} implies that
\begin{equation*}
\limsup_{t\to\infty}\; \frac{1}{t}\;\Exp^x_\xi\bigl[\abs{X_{t}}^{1+\theta}\bigr]
\, =\, 0 \qquad\forall\,\xi\in\Uadm\,.
\end{equation*}
From this point on, the proof follows as in \cite{AB-18}, using \cref{L4.1}.
Indeed, parts (a) and (b) follow from \cite[Theorem~3.4]{AB-18}
by using the above estimate and \cref{L4.1}. 
Since $\inf_{\Rd}\,\Vst>0$, any minimizing selector is recurrent. Moreover,
the twisted diffusion corresponding to the minimizing selector is regular.
Thus part (c) follows from \cite[Theorem~1.5]{AB-18}.
In addition, the hypothesis in (d) implies that for any minimizing selector $v$,
$\lambda_v=\hat\lambda_*$ is right monotone at $c$ which, in turn,
implies the simplicity of the principal eigenvalue by
\cite[Theorem~1.2]{AB-18}. This also implies the last claim
by \cite[Lemma~3.6]{AB-18}.
\end{proof}

\section*{Acknowledgements}
The work of Ari Arapostathis was supported in part by
the National Science Foundation through grant DMS-1715210, in part
the Army Research Office through grant W911NF-17-1-001,
and in part by the Office of Naval Research through grant N00014-16-1-2956
which was approved for public release under DCN \#43-5025-19.
The research of Anup Biswas was supported in part by an INSPIRE faculty fellowship
and DST-SERB grant EMR/2016/004810,
while the work of Vivek Borkar was supported by a J.\ C.\ Bose Fellowship.



\begin{thebibliography}{10}

\bibitem{Gaubert}
{\sc M.~Akian, S.~Gaubert, and R.~Nussbaum}, {\em A {C}ollatz-{W}ielandt
  characterization of the spectral radius of order-preserving homogeneous maps
  on cones}, arXiv e-prints, 1112.5968 (2011),
  \url{https://arxiv.org/abs/1112.5968}.

\bibitem{VAB}
{\sc V.~Anantharam and V.~S. Borkar}, {\em A variational formula for
  risk-sensitive reward}, SIAM J. Control Optim., 55 (2017), pp.~961--988,
  \url{https://doi.org/10.1137/151002630}.

\bibitem{AB-18}
{\sc A.~{Arapostathis} and A.~{Biswas}}, {\em Infinite horizon risk-sensitive
  control of diffusions without any blanket stability assumptions}, Stochastic
  Process. Appl., 128 (2018), pp.~1485--1524,
  \url{https://doi.org/10.1016/j.spa.2017.08.001}.

\bibitem{AB-18b}
{\sc A.~Arapostathis and A.~Biswas}, {\em A variational formula for
  risk-sensitive control of diffusions in {$\mathbb R^d$}}, SIAM J. Control
  Optim., 58 (2020), pp.~85--103, \url{https://doi.org/10.1137/18M1218704}.

\bibitem{ABB-18}
{\sc A.~Arapostathis, A.~Biswas, and V.~S. Borkar}, {\em Controlled equilibrium
  selection in stochastically perturbed dynamics}, Ann. Probab., 46 (2018),
  pp.~2749--2799, \url{https://doi.org/10.1214/17-AOP1238}.

\bibitem{ABG-19}
{\sc A.~Arapostathis, A.~Biswas, and D.~Ganguly}, {\em Certain {L}iouville
  properties of eigenfunctions of elliptic operators}, Trans. Amer. Math. Soc.,
  371 (2019), pp.~4377--4409, \url{https://doi.org/10.1090/tran/7694}.

\bibitem{ABS-19}
{\sc A.~Arapostathis, A.~Biswas, and S.~Saha}, {\em Strict monotonicity of
  principal eigenvalues of elliptic operators in {$\mathbb{R}^d$} and
  risk-sensitive control}, J. Math. Pures Appl. (9), 124 (2019), pp.~169--219,
  \url{https://doi.org/10.1016/j.matpur.2018.05.008}.

\bibitem{ABG}
{\sc A.~Arapostathis, V.~S. Borkar, and M.~K. Ghosh}, {\em Ergodic control of
  diffusion processes}, vol.~143 of Encyclopedia of Mathematics and its
  Applications, Cambridge University Press, Cambridge, 2012,
  \url{https://doi.org/10.1017/CBO9781139003605}.

\bibitem{ABK-16}
{\sc A.~Arapostathis, V.~S. Borkar, and K.~S. Kumar}, {\em Risk-sensitive
  control and an abstract {C}ollatz-{W}ielandt formula}, J. Theoret. Probab.,
  29 (2016), pp.~1458--1484, \url{https://doi.org/10.1007/s10959-015-0616-x}.

\bibitem{Armstrong-09}
{\sc S.~N. Armstrong}, {\em The {D}irichlet problem for the {B}ellman equation
  at resonance}, J. Differential Equations, 247 (2009), pp.~931--955,
  \url{https://doi.org/10.1016/j.jde.2009.03.007}.

\bibitem{BNV-94}
{\sc H.~Berestycki, L.~Nirenberg, and S.~R.~S. Varadhan}, {\em The principal
  eigenvalue and maximum principle for second-order elliptic operators in
  general domains}, Comm. Pure Appl. Math., 47 (1994), pp.~47--92,
  \url{https://doi.org/10.1002/cpa.3160470105}.

\bibitem{Berestycki-15}
{\sc H.~Berestycki and L.~Rossi}, {\em Generalizations and properties of the
  principal eigenvalue of elliptic operators in unbounded domains}, Comm. Pure
  Appl. Math., 68 (2015), pp.~1014--1065,
  \url{https://doi.org/10.1002/cpa.21536}.

\bibitem{Biswas-11a}
{\sc A.~Biswas}, {\em An eigenvalue approach to the risk sensitive control
  problem in near monotone case}, Systems Control Lett., 60 (2011),
  pp.~181--184, \url{https://doi.org/10.1016/j.sysconle.2010.12.002}.

\bibitem{BS-20}
{\sc A.~Biswas and S.~Saha}, {\em Zero-sum stochastic differential games with
  risk-sensitive cost}, Appl. Math. Optim., 81 (2020), pp.~113--140,
  \url{https://doi.org/10.1007/s00245-018-9479-8}.

\bibitem{Chasseigne-19}
{\sc E.~Chasseigne and N.~Ichihara}, {\em Ergodic problems for viscous
  {H}amilton-{J}acobi equations with inward drift}, SIAM J. Control Optim., 57
  (2019), pp.~23--52, \url{https://doi.org/10.1137/18M1179328}.

\bibitem{ChenWu}
{\sc Y.-Z. Chen and L.-C. Wu}, {\em Second order elliptic equations and
  elliptic systems}, vol.~174 of Translations of Mathematical Monographs,
  American Mathematical Society, Providence, RI, 1998.
\newblock Translated from the 1991 Chinese original by Bei Hu.

\bibitem{Dembo}
{\sc A.~Dembo and O.~Zeitouni}, {\em Large deviations: techniques and
  applications}, vol.~38 of Applications of Mathematics, Springer-Verlag, New
  York, second~ed., 1998, \url{https://doi.org/10.1007/978-1-4612-5320-4}.

\bibitem{DoVa-72}
{\sc M.~D. Donsker and S.~R.~S. Varadhan}, {\em On a variational formula for
  the principal eigenvalue for operators with maximum principle}, Proc. Nat.
  Acad. Sci. U.S.A., 72 (1975), pp.~780--783,
  \url{https://doi.org/10.1073/pnas.72.3.780}.

\bibitem{DoVa-76b}
{\sc M.~D. Donsker and S.~R.~S. Varadhan}, {\em On the principal eigenvalue of
  second-order elliptic differential operators}, Comm. Pure Appl. Math., 29
  (1976), pp.~595--621, \url{https://doi.org/10.1002/cpa.3160290606}.

\bibitem{FM}
{\sc W.~H. Fleming and W.~M. McEneaney}, {\em Risk-sensitive control on an
  infinite time horizon}, SIAM J. Control Optim., 33 (1995), pp.~1881--1915,
  \url{https://doi.org/10.1137/S0363012993258720}.

\bibitem{GilTru}
{\sc D.~Gilbarg and N.~S. Trudinger}, {\em Elliptic partial differential
  equations of second order}, vol.~224 of Grundlehren der Mathematischen
  Wissenschaften, Springer-Verlag, Berlin, second~ed., 1983,
  \url{https://doi.org/10.1007/978-3-642-61798-0}.

\bibitem{Ichihara-13b}
{\sc N.~Ichihara}, {\em Criticality of viscous {H}amilton-{J}acobi equations
  and stochastic ergodic control}, J. Math. Pures Appl. (9), 100 (2013),
  pp.~368--390, \url{https://doi.org/10.1016/j.matpur.2013.01.005}.

\bibitem{Ichihara-15}
{\sc N.~Ichihara}, {\em The generalized principal eigenvalue for
  {H}amilton-{J}acobi-{B}ellman equations of ergodic type}, Ann. Inst. H.
  Poincar\'e Anal. Non Lin\'eaire, 32 (2015), pp.~623--650,
  \url{https://doi.org/10.1016/j.anihpc.2014.02.003}.

\bibitem{Lady}
{\sc O.~A. Ladyzhenskaya and N.~N. Ural'tseva}, {\em Linear and quasilinear
  elliptic equations}, Translated from the Russian by Scripta Technica, Inc.,
  Academic Press, New York-London, 1968.

\bibitem{Lemmens}
{\sc B.~Lemmens and R.~Nussbaum}, {\em Nonlinear {P}erron-{F}robenius theory},
  vol.~189 of Cambridge Tracts in Mathematics, Cambridge University Press,
  Cambridge, 2012, \url{https://doi.org/10.1017/CBO9781139026079}.

\bibitem{Metafune-05}
{\sc G.~Metafune, D.~Pallara, and A.~Rhandi}, {\em Global properties of
  invariant measures}, J. Funct. Anal., 223 (2005), pp.~396--424,
  \url{https://doi.org/10.1016/j.jfa.2005.02.001}.

\bibitem{Meyer}
{\sc C.~Meyer}, {\em Matrix analysis and applied linear algebra}, Society for
  Industrial and Applied Mathematics (SIAM), Philadelphia, PA, 2000,
  \url{https://doi.org/10.1137/1.9780898719512}.

\bibitem{MeTw}
{\sc S.~P. Meyn and R.~L. Tweedie}, {\em Stability of {M}arkovian processes.
  {III}. {F}oster-{L}yapunov criteria for continuous-time processes}, Adv. in
  Appl. Probab., 25 (1993), pp.~518--548,
  \url{https://doi.org/10.2307/1427522}.

\bibitem{Ogiwara}
{\sc T.~Ogiwara}, {\em Nonlinear {P}erron-{F}robenius problem on an ordered
  {B}anach space}, Japan. J. Math. (N.S.), 21 (1995),
  \url{https://doi.org/10.4099/math1924.21.43}.

\bibitem{Patrizi-09}
{\sc S.~Patrizi}, {\em Principal eigenvalues for {I}saacs operators with
  {N}eumann boundary conditions}, NoDEA Nonlinear Differential Equations Appl.,
  16 (2009), pp.~79--107, \url{https://doi.org/10.1007/s00030-008-7042-z}.

\bibitem{Quaas-08}
{\sc A.~Quaas and B.~Sirakov}, {\em Principal eigenvalues and the {D}irichlet
  problem for fully nonlinear elliptic operators}, Adv. Math., 218 (2008),
  pp.~105--135, \url{https://doi.org/10.1016/j.aim.2007.12.002}.

\end{thebibliography}
\end{document}